\theoremstyle{definition}
\newtheorem{theorem}{Theorem}[subsection]
\newtheorem{theoremletters}{Theorem}
\renewcommand{\thefootnote}{\arabic{footnote}}
\theoremstyle{definition}
\newtheorem{lemma}[theorem]{Lemma}
\theoremstyle{definition}
\newtheorem{corollary}[theorem]{Corollary}
\theoremstyle{definition}
\newtheorem{proposition}[theorem]{Proposition}
\theoremstyle{definition}
\theoremstyle{definition}
\newtheorem{remark}[theorem]{Remark}
\theoremstyle{definition}
\newtheorem{example}[theorem]{Example}
\theoremstyle{definition}
\newtheorem{question}[theorem]{Question}
\theoremstyle{definition}
\newtheorem{definition}[theorem]{Definition}
\numberwithin{equation}{section}
\numberwithin{equation}{section}
\theoremstyle{definition}
\theoremstyle{definition}
\newtheorem{notation}[theorem]{Notation}
\theoremstyle{definition}
\theoremstyle{definition}
\theoremstyle{definition}
\theoremstyle{definition}
\theoremstyle{definition}
\theoremstyle{definition}
\theoremstyle{definition}
\theoremstyle{definition}
\newtheorem{construction}[theorem]{Construction}
\theoremstyle{definition}
\newtheorem{digression}[theorem]{Digression}
\newcommand{\norm}[1]{\left\lVert#1\right\rVert}
\newcommand{\vertiii}[1]{{\left\vert\kern-0.25ex\left\vert\kern-0.25ex\left\vert #1 
		\right\vert\kern-0.25ex\right\vert\kern-0.25ex\right\vert}}
\newcommand{\abs}[1]{\left\lvert#1\right\rvert}
\newcommand\xrowht[2][0]{\addstackgap[.5\dimexpr#2\relax]{\vphantom{#1}}}
\renewcommand{\thefootnote}{\alph{footnote}}
\def\author@andify{
	\nxandlist {\unskip ,\penalty-1 \space\ignorespaces}
	{\unskip {} \@@and~}
	{\unskip \penalty-2 \space \@@and~}
}
\newcommand\xqed[1]{%
	\leavevmode\unskip\penalty9999 \hbox{}\nobreak\hfill
	\quad\hbox{#1}}
\newcommand\demo{\xqed{{\Large $\blacktriangle$}}}
\def\l@subsection{\@tocline{2}{0pt}{1pc}{5pc}{}} \def\l@subsection{\@tocline{2}{0pt}{2pc}{6pc}{}}
\begin{document}
	\title{Direct sums and abstract Kadets--Klee properties}
	
	\author{Tomasz Kiwerski}
	\address[Tomasz Kiwerski \orcidlink{0000-0002-8349-8785}]{Poznan University of Technology, Institute of Mathematics, Piotrowo 3A, 60-965 Pozna\'{n} (Poland)}
	\email{\href{mailto:tomasz.kiwerski@gmail.com}{\tt tomasz.kiwerski@gmail.com}}
	
	\author{Pawe{\l} Kolwicz}
	\address[Pawe{\l} Kolwicz \orcidlink{0000-0002-6587-8135}]{Poznan University of Technology, Institute of Mathematics, Piotrowo 3A, 60-965 Pozna\'{n} (Poland)}
	\email{\href{mailto:pawel.kolwicz@put.poznan.pl}{\tt pawel.kolwicz@put.poznan.pl}}
	
	\maketitle
	
	\begin{abstract}
		Let $\mathcal{X} = \{ X_{\gamma} \}_{\gamma \in \Gamma}$ be a family of Banach spaces and let $\mathcal{E}$ be a Banach sequence space defined on $\Gamma$.
		The main aim of this work is to investigate the abstract Kadets--Klee properties, that is, the Kadets--Klee type properties in which the weak
		convergence of sequences is replaced by the convergence with respect to some linear Hausdorff topology, for the direct sum construction
		$(\bigoplus_{\gamma \in \Gamma} X_{\gamma})_{\mathcal{E}}$.
		As we will show, and this seems to be quite atypical behavior when compared to some other geometric properties, to lift the Kadets--Klee properties
		from the components to whole direct sum it is not enough to assume that all involved spaces have the appropriate Kadets--Klee property.
		Actually, to complete the picture one must add a dichotomy in the form of the Schur type properties for $X_{\gamma}$'s supplemented by the variant
		of strict monotonicity for $\mathcal{E}$.
		Back down to earth, this general machinery naturally provides a blue print for other topologies like, for example, the weak topology or the topology
		of local convergence in measure, that are perhaps more commonly associated with this type of considerations.
		Furthermore, by limiting ourselves to direct sums in which the family $\mathcal{X}$ is constant, that is, $X_{\gamma} = X$ for all $\gamma \in \Gamma$
		and some Banach space $X$, we return to the well-explored ground of K{\" o}the--Bochner sequence spaces $\mathcal{E}(X)$.
		Doing all this, we will reproduce, but sometimes also improve, essentially all existing results about the classical Kadets--Klee properties
		in K{\" o}the--Bochner sequence spaces.
	\end{abstract}
	
	\tableofcontents
	
	\renewcommand{\thefootnote}{\fnsymbol{footnote}}\footnotetext[0]{
		{\it Date:} \longdate{\today}.
		
		2020 \textit{Mathematics Subject Classification}. Primary: 46E40, 46B04; Secondary: 46A70, 46B42, 46E30.
		
		\textit{Key words and phrases}. Kadets--Klee properties; linear Hausdorff topologies; infinite direct sums of Banach spaces; vector-valued Banach spaces; K{\" o}the--Bochner spaces.
	}
	
	\section{{\bf Introduction}} \label{SECTION: Introduction}
	
	\subsection{The property ${\bf H}$}
	
	To set up the scene, let us recall that a Banach space $X$ is said to have the {\bf Kadets--Klee property}
	(hereinafter, we will sometimes abbreviate this by just saying that the space $X$ has the {\bf property $\mathbf{H}$};
	{\it cf}. Section~\ref{SUBSECTION : recollections KKP} and Appendix~\ref{APPENDIX : C}) if for sequences on the unit sphere
	of $X$ the weak topology and the norm topology agree.
	
	Among the most conspicuous examples, it is perfectly clear that any Banach space with the Schur property also has the property ${\bf H}$.
	Moreover, but this is slightly less obvious, the same can be said about Banach spaces which are uniformly convex
	(this observation is due to {\v S}mulian \cite{Smu39}; cf. \cite[Theorem~5.2.18, p.~453]{Meg98} and \cite[Theorem~5.3.7, p.~463]{Meg98}).
	However, although uniformly convex spaces are necessarily reflexive, it turns out that many classical non-reflexive spaces like,
	for example, $\bullet$ the Hardy space $H_1$; $\bullet$ the trace class $\mathscr{C}_1$; $\bullet$ and any member of the family
	of Lorentz spaces $\{ L_{p,1} \}_{1 < p < \infty}$, all share the Kadets-Klee property.
	
	Thus, from this perspective, the Kadets--Klee property can be seen as a non-reflexive analogue of uniform convexity.
	This comparison is actually quite memorable, because it would be not an exaggeration to say that among all the knots of a seemingly
	endless tangle of various geometric properties of Banach spaces considered throughout the 20th century,
	it is difficult to choose a more classical and recognizable one than the uniform convexity.
	It is therefore not particularly surprising that so much interest has been devoted to research on Kadets--Klee properties
	in various Banach spaces
	(see, for example,
	$\bullet$ \cite{CDSS96} and \cite{Suk95} for the non-commutative setting;
	$\bullet$ \cite{CKP15} and \cite{Kol12} for rearrangement invariant spaces;
	$\bullet$ \cite{DK99} for the space of bounded linear operators between $\ell_p$ and $\ell_q$;
	$\bullet$ \cite{DDSS04} and \cite{DDSS04a} for some interpolation spaces;
	$\bullet$ \cite{DHLMS03} and \cite{MS81} for Orlicz spaces;
	$\bullet$ and \cite{GP05} for JB$^{*}$-triples).
	
	Here, however, we will be mainly interested in the so-called {\bf abstract Kadets--Klee properties} ${\bf H}(\mathfrak{T})$.
	By this we will mean the ordinary version of the Kadec--Klee property ${\bf H}$ in which the weak topology is replaced by an
	arbitrary linear Hausdorff topology $\mathfrak{T}$ (see Definition~\ref{DEF: Kadets--Klee property}).
	There are quite a few papers devoted to the study of some special instances of the property ${\bf H}(\mathfrak{T})$,
	where $\mathfrak{T}$ is, for example,
	$\bullet$ the weak$^{*}$ topology (see \cite{BDDL94}, \cite{DK99} and \cite{Len91});
	$\bullet$ the topology generated by some order ideals (see \cite{CDSS96});
	$\bullet$ the topology of local convergence in measure (see \cite{CKP15}, \cite{DHLMS03}, \cite{FH99}, \cite{FHS10}, \cite{Len91} and \cite{Suk95});
	$\bullet$ or the topology of global convergence in measure (see \cite{CKP15} and \cite{Suk95}).
	Nevertheless, the selection of works dealing with the property ${\bf H}(\mathfrak{T})$ in full\footnote{Even if this means assuming the local
	convexity of $\mathfrak{T}$, or that the unit ball $\text{Ball}(X)$ is sequentially $\mathfrak{T}$-closed, {\it etc}.} generality is much more
	modest (see, for example, \cite{DDSS04}, \cite{DDSS04a} and \cite{Len91}).
	The fairly obvious reason for this state of affairs is the bewildering array of available topologies, which may generally have little in common.
	In order not to be unfounded, let $X$ be a Banach function space over a complete measure space $(\Omega,\Sigma,\mu)$
	and let us consider three topologies, namely,
	$\bullet$ the weak topology;
	$\bullet$ the topology of local convergence in measure;
	$\bullet$ and the topology of global convergence in measure,
	defined on $X$.
	This situation is tabulated in the following table.
	\begin{center}
		\begin{table}[H]
			\begin{tabular}{ | c | c | c | c | } 
				\hline \xrowht{15pt}
				\diagbox[width=10em]{{\bf property}}{{\bf topology}}	& {\bf weak topology} & {\bf \makecell{topology of \\ local convergence \\ in measure}} & {\bf \makecell{topology of \\ global convergence \\ in measure}} \\
				\hline \xrowht{15pt}
				{\bf linear} & {\it yes} & {\it yes} & {\it no} (see (A)) \\
				\hline \xrowht{15pt}
				{\bf Hausdorff} & {\it yes} & {\it no} (see (B)) & {\it no} (see (B)) \\
				\hline \xrowht{15pt}
				{\bf \makecell{coarser \\ than the norm \\ topology}} & {\it yes} & {\it yes} & {\it no} (see (C)) \\
				\hline \xrowht{15pt}
				{\bf locally convex} & {\it yes} & {\it no}  & {\it no} \\
				\hline \xrowht{15pt}
				{\bf locally solid} & {\it no} (see (D)) & {\it yes}  & {\it yes} \\
				\hline
			\end{tabular}
			\captionof{table}{Summary of the properties of the three most important topologies appearing in the context of the Kadets--Klee properties} \label{TABLE : trzy H}
		\end{table}
	\end{center}
	Hereby,
	\begin{itemize}
		\item[(A)] Under some additional assumptions (for example, if $\mu(\Omega)$ is finite; see \cite[245Y(e), p.~183]{Fre01}).
		\item[(B)] if, and only if, $(\Omega,\Sigma,\mu)$ is semi-finite (see \cite[245E, p.~176]{Fre01}).
		\item[(C)] Under some additional assumptions (for example, if $X$ is a separable rearrangement invariant space; {\it cf}. \cite[Theorem~4.1]{CDSS96}).
		\item[(D)] if, and only if, $X$ is finite dimensional (see \cite[Theorem~6.9, p.~42]{AB03}).
	\end{itemize}
	
	\subsection{Clich{\' e}s from K{\" o}the--Bochner space theory}
	
	Since the main actors for the undertaken considerations will be vector-valued Banach spaces in the form of the so-called
	{\bf $\mathcal{E}$-direct sums} $(\bigoplus_{\gamma \in \Gamma} X_{\gamma})_{\mathcal{E}}$ (see Definition~\ref{DEFINITION: direct sum} for details),
	the themes and problems typical for the thoroughly developed theory of K{\" o}the--Bochner spaces will provide a motivation
	and inspiration for our research.
	A simple reason for this fact comes from the observation that the space $(\bigoplus_{\gamma \in \Gamma} X )_{\mathcal{E}}$ coincide,
	up to the equality of norms, with the K{\" o}the--Bochner construction $\mathcal{E}(X)$.
	Therefore, it seems natural to briefly recall the most important problems that, in a sense, give shape to this theory.
	
	Let $X$ be a Banach space, and $E$ be a Banach function space over a complete measure space $(\Omega,\Sigma,\mu)$.
	For the sake of completeness, let us recall following \cite[Chapter~3]{Lin04} that the {\bf K{\" o}the--Bochner space} $E(X)$
	is understood as a vector space of all strongly measurable functions $f \colon \Omega \rightarrow X$ such that $\omega \mapsto \norm{f(\omega)}_X$
	belongs to $E$.
	As usual, we furnish the space $E(X)$ with the norm $\norm{f}_{E(X)} \coloneqq \norm{\omega \mapsto \norm{f(\omega)}_X}_E$.
	
	\vspace{10pt} \noindent
	{\bf Lifting Problem.}
	{\it Let ${\bf P}$ be a certain \enquote{geometric} property.
		Does the K{\" o}the--Bochner construction $E(X)$ have the property ${\bf P}$ provided both spaces $E$ and $X$ have the same property ${\bf P}$?
		If no, under what extra conditions on $E$ and, perhaps, $X$ does the space $E(X)$ have the property ${\bf P}$?}
	\vspace{10pt} \noindent
	
	Without a doubt, Lifting Problem is the most fundamental and prolific among the problems regarding the structure of K{\" o}the--Bochner spaces.
	Indeed, for ${\bf P}$ being
	$\bullet$ the Dunford--Pettis property;
	$\bullet$ the Radon--Nikodym property;
	$\bullet$ the Kadets--Klee property;
	$\bullet$ rotundity;
	$\bullet$ strict monotonicity;
	$\bullet$ uniform convexity;
	$\bullet$ uniform monotonicity;
	$\bullet$ smoothness; to name just a few, this problem has been analyzed exhaustively by a multitude of authors over many decades
	(see, for example,
	\cite{Boa40}, \cite{CP96}, \cite{CHM96}, \cite{Day41}, \cite{DPS07}, \cite{DK16}, \cite{HL92}, \cite{Kol03}, \cite{KP97}, \cite{KL92}, \cite{LL85}, \cite{MP22}, \cite{ST80} and \cite{Leo76};
	we refer also to Lin's monograph \cite{Lin04} for much more comprehensive discussion).
	All this gives rise to the following
	
	\begin{notation}
		We will say that the geometric property ${\bf P}$ is {\bf stable} with respect to the K{\" o}the--Bochner construction
		if the property ${\bf P}$ lifts from $E$ and $X$ to $E(X)$.
	\end{notation}

	Of course, formally it also makes sense to consider the following problem which can be seen as a reverse version of Lifting Problem
	
	\vspace{10pt} \noindent
	{\bf Inheritance Problem.}
	{\it Let ${\bf P}$ be a certain \enquote{geometric} property.
		Does the fact that the K{\" o}the--Bochner space $E(X)$ has the property ${\bf P}$ imply that both spaces $E$ and $X$ also have the
		property ${\bf P}$?}
	\vspace{10pt} \noindent
	
	Note, that for many geometric properties Inheritance Problem has the trivial solution.
	This is due to the plain fact that the space $E(X)$ contains a complemented subspaces isometrically isomorphic to $E$ and $X$ ({\it cf}. \cite[p.~178]{Lin04}).
	In any case, this is not so obvious for the property ${\bf H}(\mathfrak{T})$ (hence Definition~\ref{DEF: admissible topology}). 
	
	\begin{notation}
		We will say that the geometric property ${\bf P}$ is {\bf hereditary} with respect to the K{\" o}the--Bochner construction
		provided both spaces $E$ and $X$ inherit the property ${\bf P}$ from the space $E(X)$.
	\end{notation}
	
	\subsection{Goals}
	
	This paper takes up the related question which, after what we have already said above, can be concisely formulated as follows
	
	\begin{question} \label{QUESTION : MAIN}
		{\it Under what assumptions the abstract Kadets-Klee property ${\bf H}(\mathfrak{T})$ is stable or hereditary with respect
		to the direct sum construction $(\bigoplus_{\gamma \in \Gamma} X_{\gamma})_{\mathcal{E}}$?}
	\end{question}

	Although our primary goal is basically to answer Question~\ref{QUESTION : MAIN}, achieving this requires some intermediate steps
	(which appear to be the subject of an independent interest):
	\begin{itemize}
		\item[(G1)] Analysis of the direct sums construction $(\bigoplus_{\gamma \in \Gamma} X_{\gamma})_{\mathcal{E}}$
		(see Section~\ref{direct sums});
		\item[(G2)] Collecting and organizing certain facts about the property ${\bf H}(\mathfrak{T})$ for Banach sequence spaces
		(see Section~\ref{SUBSECTION : recollections KKP} and Appendix~\ref{APPENDIX : A});
		\item[(G3)] Introducing the concept of $\oplus$-compatible topologies and studying an abstract framework
		(see Section~\ref{SUBSECTION : abstract framework}; {\it cf}. Section~\ref{SECTION : Play Doh});
		\item[(G4)] Deducing characterizations of classical Kadets--Klee properties in the special case of K{\" o}the--Bochner spaces
		and comparing them with existing results (see Section~\ref{SECTION : EXAMPLES});
		\item[(G5)] Proposing directions of development for future research (see Section~\ref{SECTION : Open ends}).
	\end{itemize}
	
	\subsection{Overview of main results}
	
	This outline should be considered only as a relatively non-technical indication of our main results from Section~\ref{SUBSECTION : abstract framework}.
	Moreover, some notation and details used below may differ slightly from what we will present later.
	
	Throughout this section, let us fix the following data:
	$\bullet$ $\{ X_{\gamma} \}_{\gamma \in \Gamma}$ is a family of Banach spaces equipped with a linear Hausdorff
	topologies $\mathfrak{T}_{\gamma}$ coarser than the corresponding norm topologies on $X_{\gamma}$'s;
	$\bullet$ $\mathcal{E}$ is a Banach sequence space on $\Gamma$ equipped with a linear Hausdorff topology
	$\mathfrak{T}_{\mathcal{E}}$ coarser than the norm topology on $\mathcal{E}$ (we refer to Section~\ref{SUBSECTION : BSS} for details)
	$\bullet$ $\mathfrak{T}$ is the $\oplus$-compatible topology on the space $(\bigoplus_{\gamma \in \Gamma} X_{\gamma})_{\mathcal{E}}$
	(see Definition~\ref{DEF: admissible topology}).
	Let us also consider the following two conditions:
	\begin{itemize}
		\item[($\rook$)] {\it the set $\Gamma$ can be decomposed into two disjoint subsets, say $\Gamma_1$ and $\Gamma_2$,
			in such a way that all $X_{\gamma}$'s with $\gamma \in \Gamma_1$ have the Schur property with respect to $\mathfrak{T}_{\gamma}$
			(see Definition~\ref{DEF : abstract Schur})
			and $\mathcal{E}$ is ${\bf SM(\gamma)}$ for $\gamma \in \Gamma_2$ (see Definition~\ref{DEF : SM(gamma)});}
	\end{itemize}
	and
	\begin{itemize}
		\item[($\bishop$)] {\it the mapping $\{ x_{\gamma }\}_{\gamma \in \Gamma} \mapsto \sum_{\gamma \in \Gamma} \norm{x_{\gamma}}_{\gamma} \boldsymbol{e}_{\gamma}$
		is $\mathfrak{T}$-to-$\mathfrak{T}_{\mathcal{E}}$ sequentially continuous when acting from the unit sphere in $(\bigoplus_{\gamma \in \Gamma} X_{\gamma})_{\mathcal{E}}$
		into $\mathcal{E}$.}
	\end{itemize}
	Note that ($\rook$) is nothing else but the condition (2) from Theorem~\ref{THM: H is hereditary},
	while ($\bishop$) is the assumption (A3) from Theorem~\ref{THM: main theorem}.
	
	After this short preparation, we are ready to formulate our two main results.
	
	\begin{theoremletters}[Inheritance Problem; see Theorem~\ref{THM: H is hereditary}] \label{THEOREM : IP}
		{\it The property ${\bf H}(\mathfrak{T})$ is hereditary and} ($\rook$) {\it holds}.
	\end{theoremletters}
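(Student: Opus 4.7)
The plan is to treat the two conjuncts of the conclusion separately.

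\textbf{Inheritance of ${\bf H}(\mathfrak{T}_\gamma)$ and ${\bf H}(\mathfrak{T}_\mathcal{E})$.} The strategy is to exploit two canonical embeddings into the direct sum. For each $\gamma_0 \in \Gamma$, the coordinate injection $\iota_{\gamma_0} \colon X_{\gamma_0} \to (\bigoplus_{\gamma \in \Gamma} X_{\gamma})_{\mathcal{E}}$ placing $x$ in slot $\gamma_0$ and zero elsewhere satisfies $\|\iota_{\gamma_0}(x)\| = \|\boldsymbol{e}_{\gamma_0}\|_{\mathcal{E}} \|x\|_{\gamma_0}$, so it is a scaled isometry; by $\oplus$-compatibility of $\mathfrak{T}$ it is $\mathfrak{T}_{\gamma_0}$-to-$\mathfrak{T}$ sequentially continuous. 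A unit-sphere sequence $(x_n) \subset X_{\gamma_0}$ that $\mathfrak{T}_{\gamma_0}$-converges to some $x$ of norm one therefore lifts, after rescaling by $\|\boldsymbol{e}_{\gamma_0}\|_{\mathcal{E}}^{-1}$, to a unit-sphere sequence in the direct sum $\mathfrak{T}$-converging to a point of the unit sphere; ${\bf H}(\mathfrak{T})$ forces norm convergence, which descends back to $X_{\gamma_0}$. An entirely analogous argument with the embedding $\{a_{\gamma}\}_{\gamma \in \Gamma} \mapsto \{a_{\gamma} u_{\gamma}\}_{\gamma \in \Gamma}$, for a fixed choice of unit vectors $u_{\gamma} \in X_{\gamma}$, transfers ${\bf H}(\mathfrak{T})$ to ${\bf H}(\mathfrak{T}_{\mathcal{E}})$.

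\textbf{The dichotomy $(\rook)$.} This is the core of the argument. Let
\begin{equation*}
\Gamma_1 := \{ \gamma \in \Gamma : X_{\gamma} \text{ has the Schur property with respect to } \mathfrak{T}_{\gamma} \}, \qquad \Gamma_2 := \Gamma \setminus \Gamma_1,
\end{equation*}
and argue by contradiction: assume some $\gamma_0 \in \Gamma_2$ is such that $\mathcal{E}$ fails ${\bf SM}(\gamma_0)$. Negating ${\bf SM}(\gamma_0)$ yields positive elements $0 \le a \le b$ in $\mathcal{E}$ with $a_{\gamma_0} < b_{\gamma_0}$ and $\|a\|_{\mathcal{E}} = \|b\|_{\mathcal{E}} = 1$; replacing $a$ by the element that agrees with $b$ outside slot $\gamma_0$ and equals $a_{\gamma_0}$ at $\gamma_0$, we may assume $a_{\gamma} = b_{\gamma}$ for all $\gamma \ne \gamma_0$. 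Negating the Schur property for $X_{\gamma_0}$ furnishes a bounded sequence $(z_n) \subset X_{\gamma_0}$ with $z_n \to 0$ in $\mathfrak{T}_{\gamma_0}$ and $\|z_n\|_{\gamma_0}$ bounded away from $0$. Choosing unit vectors $u_{\gamma} \in X_{\gamma}$, set $y := \{a_{\gamma} u_{\gamma}\}_{\gamma}$ and, after passing to a subsequence and suitably rescaling so that $\|a_{\gamma_0} u_{\gamma_0} + z_n\|_{\gamma_0}$ lies in $[a_{\gamma_0}, b_{\gamma_0}]$ (or at least converges to $b_{\gamma_0}$), let $y_n$ agree with $y$ outside slot $\gamma_0$ and take the value $a_{\gamma_0} u_{\gamma_0} + z_n$ in slot $\gamma_0$. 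The norm profile of $y_n$ then lies within the order interval $[a,b]$, so monotonicity of $\mathcal{E}$ yields $\|y_n\| \to 1 = \|y\|$; $\oplus$-compatibility of $\mathfrak{T}$ yields $y_n \to y$ in $\mathfrak{T}$; yet $\|y_n - y\| = \|z_n\|_{\gamma_0} \|\boldsymbol{e}_{\gamma_0}\|_{\mathcal{E}}$ stays bounded below, contradicting ${\bf H}(\mathfrak{T})$.

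The main obstacle is precisely the scaling step inside the dichotomy argument: the sequence $(z_n)$ must be calibrated so that the norm profile of $y_n$ actually reaches up to $b$ at coordinate $\gamma_0$, which forces one to balance the triangle-inequality upper bound $\|a_{\gamma_0} u_{\gamma_0} + z_n\|_{\gamma_0} \le a_{\gamma_0} + \|z_n\|_{\gamma_0}$ against a matching lower bound inherited from failure of the Schur property --- this is where the interplay between $\oplus$-compatibility, the monotonicity of the $\mathcal{E}$-norm, and the structure of $\mathfrak{T}_{\gamma_0}$ bites hardest. A secondary, purely bookkeeping subtlety is to verify that the $\oplus$-compatibility axiom does genuinely deliver the componentwise-to-direct-sum sequential $\mathfrak{T}$-continuity invoked in both embeddings above.
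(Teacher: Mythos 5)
Your first part (inheritance of ${\bf H}(\mathfrak{T}_{\gamma})$ and ${\bf H}(\mathfrak{T}_{\mathcal{E}})$ via the coordinate embeddings $j_{\gamma}$ and $j_{\mathcal{E}}$, using (C3) and (C4) of $\oplus$-compatibility) is exactly the paper's argument and is fine. The gap is in the dichotomy $(\rook)$, and it sits precisely where you say it does: you need the norm profile of $y_n$ to lie in the order interval $[a,b]$, but the lower estimate $\lVert a_{\gamma_0}u_{\gamma_0}+z_n\rVert_{\gamma_0}\geqslant a_{\gamma_0}$ does not come for free (the perturbation $z_n$ can partially cancel $a_{\gamma_0}u_{\gamma_0}$), and without it you only get $\lVert y_n\rVert\leqslant 1$, which is not enough to invoke ${\bf H}(\mathfrak{T})$. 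Flagging this as ``the main obstacle'' does not discharge it. It \emph{can} be repaired --- for each $n$ the map $\lambda\mapsto\lVert a_{\gamma_0}u_{\gamma_0}+\lambda z_n\rVert_{\gamma_0}$ is continuous, equals $a_{\gamma_0}$ at $\lambda=0$ and tends to $\infty$, so one can pick $\lambda_n$ hitting the value $b_{\gamma_0}$ exactly; the triangle inequality gives $\lambda_n\lVert z_n\rVert_{\gamma_0}\geqslant b_{\gamma_0}-a_{\gamma_0}>0$ and $\lambda_n\leqslant (a_{\gamma_0}+b_{\gamma_0})/\inf_n\lVert z_n\rVert_{\gamma_0}$, and boundedness of $(\lambda_n)$ preserves $\mathfrak{T}_{\gamma_0}$-nullity in a topological vector space --- but none of this is in your write-up.

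The paper avoids the calibration entirely by a cleaner construction. It first normalizes the non-Schur witness to a $\mathfrak{T}_{\gamma_0}$-null sequence $x_n^{(\gamma_0)}$ with $\lVert x_n^{(\gamma_0)}\rVert_{\gamma_0}=1$ exactly, then invokes Lemma~\ref{LEMMMA : geometric lemma SM} to produce a positive norm-one $a\in\mathcal{E}$ with $a(\gamma_0)>0$ and $\lVert a\rVert_{\mathcal{E}}=\lVert a\,{\bf 1}_{\Gamma\setminus\{\gamma_0\}}\rVert_{\mathcal{E}}$, and sets $y$ to be \emph{zero} at coordinate $\gamma_0$ and $y_n=y+a(\gamma_0)\,x_n^{(\gamma_0)}\otimes\boldsymbol{e}_{\gamma_0}$. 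The norm profiles of $y$ and $y_n$ are then exactly $a\,{\bf 1}_{\Gamma\setminus\{\gamma_0\}}$ and $a$, both of norm one, with no triangle-inequality estimate needed, while $\lVert y-y_n\rVert=a(\gamma_0)\lVert\boldsymbol{e}_{\gamma_0}\rVert_{\mathcal{E}}>0$ is constant. Note that Lemma~\ref{LEMMMA : geometric lemma SM} (whose proof is a convexity argument) is exactly the tool that converts the raw failure of ${\bf SM}(\gamma_0)$ into the ``norm is insensitive at $\gamma_0$'' statement your argument implicitly relies on; you should either cite it or reprove it rather than substitute the unproven containment of the profile in $[a,b]$.
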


	\begin{theoremletters}[Lifting Problem; see Theorem~\ref{THM: main theorem}] \label{THEOREM : LP}
		{\it Under the assumption} ($\bishop$), {\it the property ${\bf H}(\mathfrak{T})$ is stable.}
	\end{theoremletters}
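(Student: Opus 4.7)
The plan is to reduce the direct-sum Kadets--Klee problem to two scalar-valued problems via the norm-sequence map
$\Phi(y) := \sum_{\gamma \in \Gamma} \norm{y_{\gamma}}_{\gamma} \boldsymbol{e}_{\gamma} \in \mathcal{E}$,
which is a nonlinear isometry on norms: $\norm{\Phi(y)}_{\mathcal{E}} = \norm{y}_{(\bigoplus_{\gamma} X_{\gamma})_{\mathcal{E}}}$.
Fix $\{x^{(n)}\}$ on the unit sphere of the direct sum with $x^{(n)} \to x$ in $\mathfrak{T}$ and $\norm{x} = 1$; the goal is to show $\norm{x^{(n)} - x} \to 0$.

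First I would extract convergence at the scalar level. Since $\Phi$ maps the unit sphere of the direct sum to the unit sphere of $\mathcal{E}$, hypothesis ($\bishop$) gives $\Phi(x^{(n)}) \to \Phi(x)$ in $\mathfrak{T}_{\mathcal{E}}$, and then property $\mathbf{H}(\mathfrak{T}_{\mathcal{E}})$ of $\mathcal{E}$ yields the norm convergence $\Phi(x^{(n)}) \to \Phi(x)$ in $\mathcal{E}$. Next, the $\oplus$-compatibility of $\mathfrak{T}$ furnishes the coordinate-wise $\mathfrak{T}_{\gamma}$-convergence $x_{\gamma}^{(n)} \to x_{\gamma}$ for every $\gamma \in \Gamma$, while the previous step (combined with the continuity of coordinate functionals on $\mathcal{E}$) gives $\norm{x_{\gamma}^{(n)}}_{\gamma} \to \norm{x_{\gamma}}_{\gamma}$. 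Applying $\mathbf{H}(\mathfrak{T}_{\gamma})$ of $X_{\gamma}$ coordinate-wise --- trivially when $x_{\gamma} = 0$, and after rescaling to the unit sphere otherwise --- delivers the coordinate-wise $X_{\gamma}$-norm convergence $\norm{x_{\gamma}^{(n)} - x_{\gamma}}_{\gamma} \to 0$ for each $\gamma$.

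The remaining and most delicate task is to upgrade this coordinate-wise picture to $\norm{x^{(n)} - x} \to 0$, that is, to $\norm{\Phi(x^{(n)} - x)}_{\mathcal{E}} \to 0$. Writing $c^{(n)} := \Phi(x^{(n)} - x)$, one has $c^{(n)} \to 0$ coordinate-wise and $0 \leq c^{(n)} \leq \Phi(x^{(n)}) + \Phi(x)$, and I would split
$c^{(n)} = \bigl(c^{(n)} \wedge 2\Phi(x)\bigr) + \bigl(c^{(n)} - 2\Phi(x)\bigr)_{+}$.
The second summand is dominated by $|\Phi(x^{(n)}) - \Phi(x)|$, whose $\mathcal{E}$-norm vanishes by the first step. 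The first summand is order-bounded by $2\Phi(x)$ and tends to $0$ coordinate-wise; truncating $\Gamma$ to a finite $F$ controls the body through coordinate-wise $X_{\gamma}$-norm convergence (where only finitely many $\gamma$ matter), and the tail through the absolute continuity of the norm of $\Phi(x)$ in $\mathcal{E}$.

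The main obstacle I anticipate is precisely this last tail estimate: it requires $\Phi(x)$ to have absolutely continuous norm in $\mathcal{E}$, a fact that is not an automatic consequence of the bare property $\mathbf{H}(\mathfrak{T}_{\mathcal{E}})$ and must instead be harvested from the interplay of $\mathbf{H}(\mathfrak{T}_{\mathcal{E}})$ with the natural structural hypotheses on $\mathfrak{T}_{\mathcal{E}}$ collected in Section~\ref{SUBSECTION : recollections KKP} (e.g.\ its coarseness with respect to coordinate convergence on bounded sets, under which $\mathbf{H}(\mathfrak{T}_{\mathcal{E}})$ typically rules out non-absolutely-continuous elements in $\mathcal{E}$). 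By contrast, the first two paragraphs above are essentially formal consequences of ($\bishop$), $\oplus$-compatibility, and the scalar Kadets--Klee assumptions; the creative content of the proof lies in how one cashes in the $\mathbf{H}(\mathfrak{T}_{\mathcal{E}})$ hypothesis to handle the tails.
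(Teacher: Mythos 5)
Your proof is correct and follows the same overall skeleton as the paper's proof of Theorem~\ref{THM: main theorem}: use ($\bishop$) to push the sequence down to $\mathcal{E}$, invoke ${\bf H}(\mathfrak{T}_{\mathcal{E}})$ to get norm convergence of $\lfloor x^{(n)} \rceil$, extract coordinate-wise convergence and apply ${\bf H}(\mathfrak{T}_{\gamma})$, then finish with a finite-part/tail split. Where you genuinely diverge is the tail estimate. The paper passes to a subsequence via \cite[Lemma~2, p.~97]{KA82} to obtain an order majorant $\abs{\lfloor x \rceil - \lfloor x_{n_k} \rceil} \leqslant \varepsilon_k y$ and then dominates the tail by $(2\lfloor x \rceil + \varepsilon_1 y){\bf 1}_{\Gamma_{n_0}}$; your lattice decomposition $c^{(n)} = \bigl(c^{(n)} \wedge 2\Phi(x)\bigr) + \bigl(c^{(n)} - 2\Phi(x)\bigr)_{+}$, with the excess controlled pointwise by $\abs{\Phi(x^{(n)}) - \Phi(x)}$ and hence norm-null by the ideal property, avoids both the subsequence extraction and the auxiliary majorant $y$ --- a mild but real simplification. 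The one ingredient you flag as missing is exactly the paper's hypothesis (A1) (that $\mathfrak{T}_{\mathcal{E}}$ is pre-Lebesgue), which via Lemma~\ref{LEMMA : H(tau) => OC} upgrades ${\bf H}(\mathfrak{T}_{\mathcal{E}})$ to order continuity of $\mathcal{E}$ and thus gives the absolute continuity of the norm of $\Phi(x)$ that your tail estimate needs; your diagnosis of where this must come from is accurate, so the gap is one of citation rather than of substance.
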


	Anyway, since ($\rook$) and ($\bishop$) have little in common at first glance, the usefulness of both Theorem~\ref{THEOREM : IP}
	and Theorem~\ref{THEOREM : LP} is mainly due to the following somewhat surprising observation.
	
	\begin{theoremletters}[Compatibility result; see Theorem~\ref{THM : oba warunki sa rownowazne}] \label{THEOREM : CR}
		{\it Under some mild but essentially technical assumptions, both properties} ($\rook$) {\it and } ($\bishop$) {\it coincide.}
	\end{theoremletters}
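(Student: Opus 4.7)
The plan is to prove the two implications separately, using a direct forward argument for $(\rook)\Rightarrow(\bishop)$ and a contrapositive argument for $(\bishop)\Rightarrow(\rook)$. The natural witness for the decomposition is the canonical one: set
$$ \Gamma_1 = \{\gamma \in \Gamma : X_\gamma \text{ has the Schur property with respect to } \mathfrak{T}_\gamma\}, \qquad \Gamma_2 = \Gamma \setminus \Gamma_1. $$
With this choice, condition $(\rook)$ reduces to the assertion that $\mathcal{E}$ is ${\bf SM}(\gamma)$ for every $\gamma \in \Gamma_2$, so both directions ultimately amount to a coordinate-by-coordinate dichotomy.

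For $(\rook)\Rightarrow(\bishop)$, I would take a sequence $\{x^n\}$ in the unit sphere of $(\bigoplus_{\gamma \in \Gamma} X_\gamma)_{\mathcal{E}}$ with $x^n \to x$ in $\mathfrak{T}$. By $\oplus$-compatibility of $\mathfrak{T}$, the coordinate projections are $\mathfrak{T}$-to-$\mathfrak{T}_\gamma$ continuous, so $x^n_\gamma \to x_\gamma$ in $\mathfrak{T}_\gamma$ for every $\gamma$. When $\gamma \in \Gamma_1$, the Schur property upgrades this to $\norm{x^n_\gamma}_\gamma \to \norm{x_\gamma}_\gamma$. Because the norm-sequences $\sum_\gamma \norm{x^n_\gamma}_\gamma \boldsymbol{e}_\gamma$ all lie on the unit sphere of $\mathcal{E}_+$, coordinate-wise convergence on $\Gamma_1$ combined with the ${\bf SM}(\gamma)$-rigidity of $\mathcal{E}$ on $\Gamma_2$ should be enough to pin down their $\mathfrak{T}_{\mathcal{E}}$-limit as $\sum_\gamma \norm{x_\gamma}_\gamma \boldsymbol{e}_\gamma$, which is exactly $(\bishop)$.

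For the contrapositive of $(\bishop)\Rightarrow(\rook)$, suppose there exists $\gamma_0 \in \Gamma$ such that $X_{\gamma_0}$ fails Schur with respect to $\mathfrak{T}_{\gamma_0}$ \emph{and} $\mathcal{E}$ fails ${\bf SM}(\gamma_0)$. From the first failure I would extract $\{y^n\} \subset X_{\gamma_0}$ with $\norm{y^n}_{\gamma_0} = 1$ and $y^n \to 0$ in $\mathfrak{T}_{\gamma_0}$; from the second I would extract $a \in \mathcal{E}_+$ and a scalar $t > 0$ with $a(\gamma_0) = 0$ and $\norm{a + t\boldsymbol{e}_{\gamma_0}}_{\mathcal{E}} = \norm{a}_{\mathcal{E}}$. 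Using $a$ as a scaffolding, I would build $\{x^n\}$ in the unit sphere of the direct sum by planting norming vectors of $X_\gamma$ (of norm $a(\gamma)$) at each $\gamma \ne \gamma_0$ and placing $t\,y^n$ at coordinate $\gamma_0$. The failure of ${\bf SM}(\gamma_0)$ keeps the total norm constant, $y^n \to 0$ in $\mathfrak{T}_{\gamma_0}$ delivers $\mathfrak{T}$-convergence of $x^n$ to the same object with $0$ at coordinate $\gamma_0$, while the norm-sequence oscillates between $a + t\boldsymbol{e}_{\gamma_0}$ and $a$ in a way that cannot collapse under $\mathfrak{T}_{\mathcal{E}}$, thereby refuting $(\bishop)$.

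The main obstacle is the last construction: I need $\mathfrak{T}$ to be simultaneously \emph{coarse enough} that $x^n \to x$ follows from coordinatewise data (which is just $\oplus$-compatibility), and $\mathfrak{T}_{\mathcal{E}}$ to be \emph{fine enough} that $\sum_\gamma \norm{x^n_\gamma}_\gamma \boldsymbol{e}_\gamma \not\to \sum_\gamma \norm{x_\gamma}_\gamma \boldsymbol{e}_\gamma$ in $\mathfrak{T}_{\mathcal{E}}$, i.e., that $\mathfrak{T}_{\mathcal{E}}$ separates the two admissible configurations $a$ and $a + t\boldsymbol{e}_{\gamma_0}$. This is precisely where the ``mild but essentially technical'' assumptions will enter the picture: they should guarantee both the Hausdorff-like separation of $\mathfrak{T}_{\mathcal{E}}$ at individual coordinates and the compatibility of $\mathfrak{T}$ with the $\mathfrak{T}_\gamma$'s and $\mathfrak{T}_{\mathcal{E}}$ needed to transport sequences in and out of the direct sum without losing coordinatewise information.
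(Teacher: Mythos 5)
Your overall architecture matches the paper's proof of Theorem~\ref{THM : oba warunki sa rownowazne}: the canonical maximal decomposition, the reduction to a coordinate-by-coordinate dichotomy, and the contrapositive construction for $(\bishop)\Rightarrow(\rook)$ are all exactly what the paper does. In particular, your counterexample in the reverse direction is the paper's: the scalar scaffold $a$ with $a(\gamma_0)=0$ and $\lVert a+t\boldsymbol{e}_{\gamma_0}\rVert_{\mathcal{E}}=\lVert a\rVert_{\mathcal{E}}$ is supplied by Lemma~\ref{LEMMMA : geometric lemma SM}, the norm sequence $\lfloor x^n\rceil$ is then \emph{constantly} equal to $a+t\boldsymbol{e}_{\gamma_0}$ (it does not oscillate), and $\mathfrak{T}_{\mathcal{E}}$-convergence to $a$ is ruled out by the standing assumption that $\mathfrak{T}_{\mathcal{E}}$ agrees with the point-wise topology on positive unit-sphere sequences (or simply by Hausdorffness). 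That half is fine.

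The genuine gap is in $(\rook)\Rightarrow(\bishop)$ at the coordinates $\gamma_0\in\Gamma_2$. There you have no Schur property, so nothing you have written yields $\norm{x^n_{\gamma_0}}_{\gamma_0}\to\norm{x_{\gamma_0}}_{\gamma_0}$, and ``${\bf SM}(\gamma_0)$-rigidity should be enough to pin down the limit'' is not an argument: ${\bf SM}(\gamma_0)$ is a statement about comparable pairs of elements of $\mathcal{E}$, not about a single coordinate sequence of reals. The paper closes this with a three-step squeeze that your proposal would need to reproduce: (i) sequential lower semi-continuity of each $\norm{\cdot}_{\gamma}$ with respect to $\mathfrak{T}_{\gamma}$ gives $\lfloor x\rceil(\gamma)\leqslant\liminf_{n}\lfloor x^n\rceil(\gamma)$ for every $\gamma$; (ii) boundedness of each coordinate sequence plus a diagonal argument extracts a subsequence with $\lfloor x^n\rceil\to f$ point-wise, and the hypothesis that $\text{Ball}(\mathcal{E})$ is sequentially closed in the point-wise topology (the Fatou-type condition) places $f$ in $\text{Ball}(\mathcal{E})$ with $\lfloor x\rceil\leqslant \abs{f}$; (iii) if $\lfloor x\rceil(\gamma_0)<\abs{f}(\gamma_0)$, then ${\bf SM}(\gamma_0)$ forces $1=\norm{\lfloor x\rceil}_{\mathcal{E}}<\norm{f}_{\mathcal{E}}\leqslant 1$, a contradiction, so every subsequential point-wise limit of $\lfloor x^n\rceil$ agrees with $\lfloor x\rceil$ at $\gamma_0$. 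These are the actual ``mild but technical'' assumptions of the theorem (lower semi-continuity of the coordinate norms, point-wise closedness of the unit ball of $\mathcal{E}$, and agreement of $\mathfrak{T}_{\mathcal{E}}$ with the point-wise topology on the positive unit sphere); your closing paragraph instead locates the technical hypotheses in the separation needed for the contrapositive, which is the easy half.
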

	
	From here, there is a direct path to application in concrete situations
	(we will how to do this in Section~\ref{SECTION : EXAMPLES}; see Theorem~\ref{THM : H weak} and Theorem~\ref{THEOREM : H(measure)}).
	
	\subsection{Outline}
	
	Let us now briefly describe the organization of this work.
	
	Overall, the paper is divided into six sections excluding three appendices and bibliography.
    And so, in Section~\ref{SECTION : Toolbox} we will recall the necessary terminology and some indispensable facts that will be useful later.
    Next part, that is, Section~\ref{SECTION : Main results}, constitutes the main part of the work and presents a general approach to the abstract
    Kadets--Klee properties in direct sums.
    Section~\ref{SECTION : Play Doh} is devoted to the study of $\oplus$-compatible topologies.
    Results from Section~\ref{SECTION : Main results} will be applied in Section~\ref{SECTION : EXAMPLES} which is dedicated to classical Kadets-Klee
    properties with respect to the weak topology and the topology of local convergence in measure.
    Finally, in Section~\ref{SECTION : Open ends} we include some open questions that refer to the previous sections.
    The whole work is bound together with Appendices~\ref{APPENDIX : A}, \ref{APPENDIX : B} and \ref{APPENDIX : C}.
     
	\subsection{Acknowledgments}
	
	The first named author is grateful to Jakub Tomaszewski for several enlightening discussions.
	
	The research of Tomasz Kiwerski was supported by Pozna{\' n} University of Technology: Grant number 0213/SBAD/0120.
	The research of Pawe{\l} Kolwicz was also supported by Pozna{\' n} University of Technology: Grant number 0213/SBAD/0119.
	
	\section{{\bf Toolbox}} \label{SECTION : Toolbox}
	
	In this section we will provide a handbook of notation (Section~\ref{SUBSECTION : Notation}) and then we collect some terminology and basic results about:
	$\bullet$ Banach sequence spaces (Section~\ref{SUBSECTION : BSS});
	$\bullet$ some rotundity and monotonicity properties (Section~\ref{SUBSECTION : GP});
	$\bullet$ and direct sums of families of Banach spaces (Section~\ref{direct sums}).
	Other, possibly unfamiliar, definitions and concepts will be introduced in the sections they are used.
	
	\subsection{Notation} \label{SUBSECTION : Notation}
	
	As usual, $\mathbb{N} = \{1,2, ...\}$, $\mathbb{Z}$ and $\mathbb{R}$ denote the set of natural numbers, integers and reals, respectively.

	\begin{notation}
		To lighten the notation at times, we will briefly denote the fact that the sequence {\it $x_n$ converges
		to $x$ in the topology $\mathfrak{T}$} as $x_n \overset{\mathfrak{T}}{\rightarrow} x$ or $\mathfrak{T}$-$\lim_{n \rightarrow \infty} x_n = x$.
	\end{notation}

	\begin{notation}
		Interchangeably, depending on the notational convenience, we will denote a vector-valued function $x \colon \Gamma \rightarrow X$,
		where $\Gamma$ is a countable set and $X$ is a Banach space, as $\{ x(\gamma) \}_{\gamma \in \Gamma}$ or $\{ x_\gamma \}_{\gamma \in \Gamma}$.
		This should not lead to any confusion, as it should always be clear from the context what exactly we mean.
	\end{notation}
	
	\begin{notation}
		Defined terms are usually distinguished from the rest of text by {\bf bold} font.
	\end{notation}
	
	\begin{notation}
		By tradition, we will use \enquote{halmos} $\blacksquare$ at the end of each proof.
		However, we find it useful to also use the symbol {\Large $\blacktriangle$} at the end of each remark, digression and example.
	\end{notation}

	\begin{notation}[Unit vector basis]
		Let $c_{00}(\Gamma)$ be the space of sequences of real numbers all but finitely many of which are zero.
		We shall let $\{ \boldsymbol{e}_{\gamma} \}_{\gamma \in \Gamma}$ stand for the unit vector basis of this vector space.
		In other words,
		\begin{equation*}
			\boldsymbol{e}_{\gamma} \coloneqq (0,..., 0, \underbrace{1}_{\gamma^{\text{th}} \text{ position}}, 0, ...).
		\end{equation*}
		Moreover, we will use the following notation
		\begin{equation*}
			x \otimes \boldsymbol{e}_{\gamma} \coloneqq (0, ..., 0, \underbrace{x}_{\gamma^{\text{th}} \text{ position}}, 0, ...).
		\end{equation*}
	\end{notation}

	\begin{notation}
		Unless explicitly stated otherwise, by a Banach space $X$ we will always understand an infinite-dimensional real Banach space,
		while by a subspace of a Banach space $X$ we will mean a closed linear subspace of $X$.
	\end{notation}

	\begin{notation}[Duality bracket]
		If $X$ is a Banach space, we write $X^{*}$ for the topological dual space of $X$
		and $\text{Ball}(X)$ for the closed unit ball of $X$.
		We use brackets $\langle \tiny\bullet, \tiny\bullet \rangle$ for the pairing between Banach space and its dual,
		that is, for $x \in X$ and $x^{*} \in X^{*}$ we shall write $\langle x, x^{*} \rangle \coloneqq x^{*}(x)$
		for the action of $x^{*}$ on $x$.
		We write $(X,\textit{weak})$ for the topological vector space obtained by equipping
		the Banach space $X$ with its weak topology.
	\end{notation}
	
\subsection{Banach sequence spaces} \label{SUBSECTION : BSS}

Let $(\Gamma, 2^{\Gamma},\#)$, where $\Gamma$ is a countable set and $\#$ is a counting measure on $\Gamma$, that is,
$\#(A) \coloneqq \sum_{\gamma \in \Gamma} \delta_{\gamma}(A)$ for $A \subset \Gamma$ (here, $\delta_{\gamma}$ is the Dirac 
delta concentrated at $\gamma \in \Gamma$), be a purely atomic measure space.
Further, let $\omega(\Gamma)$, briefly just $\omega$, be the set of all real-valued functions defined on $2^{\Gamma}$.
We equip the space $\omega$ with the topology of point-wise convergence, that is, the topology of convergence in measure on sets of finite measure.
This makes $\omega$ an $F$-space.

A Banach space $X$ is called a {\bf Banach sequence space} (or, using another nomenclature, a {\bf K{\" o}the sequence space})
if the following three conditions hold:
\begin{itemize}
	\item[(1)] $X$ is a linear subspace of $\omega(\Gamma)$;
	\item[(2)] for any finite set $F \subset \Gamma$ the characteristic function ${\bf 1}_F$ belongs to $X$;
	\item[(3)] if $\abs{x(\gamma)} \leqslant \abs{y(\gamma)}$ for all $\gamma \in \Gamma$ and $\sum_{\gamma \in \Gamma} y(\gamma) \boldsymbol{e}_{\gamma} \in X$,
	then $\sum_{\gamma \in \Gamma} x(\gamma) \boldsymbol{e}_{\gamma} \in X$ belongs to $X$ and
	$\lVert \sum_{\gamma \in \Gamma} x(\gamma) \boldsymbol{e}_{\gamma} \rVert \leqslant \Vert \sum_{\gamma \in \Gamma} y(\gamma) \boldsymbol{e}_{\gamma} \rVert$
	(the so-called {\bf ideal property}).
\end{itemize}

Due to the closed graph theorem, a formal inclusion of two Banach sequence spaces $X$ and $Y$ is a continuous operator,
that is, the quantity $\norm{\text{id} \colon X \rightarrow Y} \coloneqq \sup \{ \norm{x}_Y \colon \norm{x}_X = 1 \}$ is finite.
To clearly emphasize this fact we will sometimes write $X \hookrightarrow Y$ instead of just $X \subset Y$.
We will use the symbol $X = Y$ to indicate that the spaces $X$ and $Y$ are the same as vector spaces and their norms are equivalent,
that is, $X \hookrightarrow Y$ and $Y \hookrightarrow X$.

A function $x\in X$ from a Banach sequence space $X$ is said to be {\bf order continuous} (or has an {\bf order continuous norm}) if,
for any sequence $\{ x_{n} \}_{n=1}^{\infty}$ of positive and disjoint functions from $X$ that is order bounded by  $|x| $
and converges point-wisely to zero, it follows that $\{ x_{n} \}_{n=1}^{\infty}$ is norm null sequence. By $X_o$ we denote a closed subspace of all order continuous sequences from $X$.
We will say that the space $X$ is {\bf order continuous} if $X = X_o$.
Equivalently, the space $X$ is order continuous provided for each $x\in X$ and for any decreasing sequence $\{ \Gamma_n \}_{n=1}^{\infty}$ of subsets of $\Gamma$
with empty intersection such that $\#(\Gamma \setminus \Gamma_n)$ is finite, one has
\begin{equation*}
   \lim_{n \rightarrow \infty} \lVert \sum_{\gamma \in  \Gamma_n} x(\gamma) \boldsymbol{e}_{\gamma} \rVert = 0. 
\end{equation*}

Since purely atomic measure spaces with at most countable number of atoms are separable, so a Banach sequence space $X$
is order continuous if, and only if, it is separable (see \cite[Theorem~5.5, p.~27]{BS88}).

By the {\bf K{\" o}the dual} $X^{\times}$ of a given Banach sequence space $X$ we will understand a vector space all sequences
$\{ x(\gamma) \}_{\gamma \in \Gamma}$ such that $\sum_{\gamma \in \Gamma} \abs{x(\gamma) y(\gamma)}$ is finite for all $\{ y(\gamma) \}_{\gamma \in \Gamma} \in X$
equipped with the norm
\begin{equation*}
   \lVert \sum_{\gamma \in \Gamma} x(\gamma) \boldsymbol{e}_{\gamma} \rVert_{X^{\times}}
\coloneqq
\sup \{ \sum_{\gamma \in \Gamma} \abs{x(\gamma) y(\gamma)} \colon \lVert \sum_{\gamma \in \Gamma} y(\gamma) \boldsymbol{e}_{\gamma} \rVert_X \leqslant 1 \}. 
\end{equation*}

Recall that $X = X^{\times \times}$ if, and only if, the norm in $X$ has the {\bf Fatou property}, that is,
for any increasing sequence $\{ x_n \}_{n=1}^{\infty}$ of non-negative functions from $X$ that converges point-wisely to $x$
and $\sup \{ \norm{x_n} \colon n \in \mathbb{N} \}$ is finite, it follows that $x$ belongs to $X$ and $\norm{x} = \sup \{ \norm{x_n} \colon n \in \mathbb{N} \}$. 

Given a separable Banach sequence space $X$, its K{\" o}the dual $X^{\times}$ can be naturally identified with the topological dual $X^{*}$,
that is, the space of all continuous linear forms on $X$ (see \cite[Corollary~4.3, p.~23]{BS88}).
Moreover, a Banach sequence space $X$ with the Fatou property is reflexive if, and only if, both $X$ and $X^{\times}$ are separable
(see \cite[Corollary~4.4, p.~23]{BS88}).

We refer to the books by Bennett and Sharpley \cite{BS88}, Lindenstrauss and Tzafriri \cite{LT77}, \cite{LT79},
Luxemburg and Zaanen \cite{LZ65}, Kantorovich and Akilov \cite{KA82} for a comprehensive study of Banach sequence and function spaces.
Much more information about order continuity property offers Wnuk's monograph \cite{Wn99} (see also \cite{Con19} and \cite[Section~345]{Fre01}).
The standard reference for the basic theory of Banach spaces is, for example, Albiac and Kalton \cite{AK06} and Megginson \cite{Meg98}.

Furthermore, at some points, we will use some facts about topological vector spaces and Banach lattices.
For the general theory of locally convex spaces we refer to the books by Grothendieck \cite{Gro73} and Jarchow \cite{Jar81}.
For the general theory of abstract Banach lattices and, even more generally, Riesz spaces we recommend taking a look at
Aliprantis and Burkinshaw \cite{AB03} and Meyer-Nieberg \cite{MN91} (see also \cite[Chapters~24 and 35]{Fre01} and \cite[Chapter~1]{LT79}).
		
\subsection{Geometric properties} \label{SUBSECTION : GP}
	
We will briefly recall here the most important convexity and monotonicity properties of normed spaces and, respectively,
normed lattices, which we will use later.

Let $X$ be a normed space.
The space $X$ is said to be {\bf rotund} (briefly, the space $X$ is ${\bf R}$) if $\norm{x+y} < 2$ whenever $x$ and $y$
are different points in $\text{Ball}(X)$.
Moreover, $X$ is said to be {\bf locally uniformly rotund} (briefly, the space $X$ is ${\bf LUR}$) if, for any $x \in \text{Ball}(X)$
and $\varepsilon > 0$, there is $\delta = \delta(x,\varepsilon) > 0$ such that for any $y \in \text{Ball}(X)$
the inequality $\norm{x+y} \geqslant \varepsilon$ imply that $\norm{x+y} \leqslant 2(1+\delta)$.
Equivalently, the space $X$ is ${\bf LUR}$, whenever $\{x_n\}_{n=1}^{\infty}$ and $x$ are in $\text{Ball}(X)$
and $\norm{x + x_n} \rightarrow 2$, it follows that $\norm{x - x_n} \rightarrow 0$.
Finally, the space $X$ is {\bf uniformly rotund} (briefly, the space $X$ is ${\bf UR}$) if, for every $\varepsilon > 0$,
there is $\delta = \delta(\varepsilon) > 0$ such that $\norm{x+y} \leqslant 2(1+\delta)$, whenever $x,y \in \text{Ball}(X)$
and $\norm{x-y} > \varepsilon$.
Equivalently, the space $X$ is ${\bf UR}$, whenever $\{x_n\}_{n=1}^{\infty}$ and $\{y_n\}_{n=1}^{\infty}$ are sequences
in $\text{Ball}(X)$ and $\norm{x_n + y_n} \rightarrow 2$, it follows that $\norm{x_n - y_n} \rightarrow 0$.
It is clear that every uniformly rotund normed space is locally uniformly rotund, and every locally uniformly rotund normed
space is rotund.
Pictographically, ${\bf UR} \Rightarrow {\bf LUR} \Rightarrow {\bf R}$.

Now, let $X$ be a normed lattice. The space $X$ is called {\bf strictly monotone} (briefly, the space $X$ is ${\bf SM}$)
if, for any two different elements $x$ and $y$ from $X$ such that $0 \leqslant y \leqslant x$, we have $\norm{x} < \norm{y}$.
Furthermore, $X$ is said to be {\bf uniformly monotone} (briefly, the space $X$ is ${\bf UM}$) if, for every $\varepsilon > 0$,
there is $\delta = \delta(\varepsilon) > 0$ such that $\norm{x - y} \leqslant 1 - \delta$, whenever $0 \leqslant y \leqslant x$,
$x \in \text{Ball}(X)$ and $\norm{y} \geqslant \varepsilon$.
Clearly, ${\bf UM} \Rightarrow {\bf SM}$.

It is also known that, when restricted to the couples of compatible and non-negative elements, monotonicity properties
are equivalent to the corresponding convexity properties.
And so, it follows from \cite[Theorem~1]{HKM00}, that if the {\bf positive cone} $X_+ \coloneqq \{ x \in X \colon x \geqslant 0 \}$
is ${\bf R}$ or ${\bf UR}$, then the space $X$ is ${\bf SM}$ or, respectively, ${\bf UM}$.
In symbols, ${\bf R} \Rightarrow {\bf SM}$ and ${\bf UR} \Rightarrow {\bf UM}$.

Diagrammatically speaking, the above discussion can be summarized as follows
\begin{equation*}
	\begin{tikzcd}
		{\bf UR} \arrow[d, Rightarrow] \arrow[r, Rightarrow] & {\bf LUR} \arrow[r, Rightarrow] & {\bf R} \arrow[d, Rightarrow] \\
		{\bf UM} \arrow[rr, Rightarrow]          &               & {\bf SM}         
	\end{tikzcd}
\end{equation*}

Later we will need the following \enquote{localized} version of the ${\bf SM}$ property.
	
\begin{definition} \label{DEF : SM(gamma)}
	{\it Let $X$ be a Banach sequence space defined on $\Gamma$. Fix $\gamma_0 \in \Gamma$.
	The space $X$ is said to be {\bf strictly monotone on $\gamma_0^{\text{th}}$ coordinate} (briefly, the space $X$ is ${\bf SM}(\gamma_0)$)
	if, for any $x, y \in X_+$ such that $x(\gamma) \leqslant y(\gamma)$ for $\gamma \in \Gamma \setminus \{ \gamma_0 \}$
	and $x(\gamma_0) < y(\gamma_0)$, it follows that
	$\lVert x \rVert < \lVert y \rVert$.}
\end{definition}

Of course, a Banach sequence space $X$ defined on $\Gamma$ is ${\bf SM}$ if, and only if,
$X$ is ${\bf SM}(\gamma)$ for each $\gamma \in \Gamma$.
Slightly less obvious is the following geometric lemma, which was mentioned without detailed proof in \cite{DPS07}.
Since it will play a certain role also in our considerations, let us complete this little detail.
	
	\begin{lemma} \label{LEMMMA : geometric lemma SM}
		{\it Let $X$ be a Banach sequence space defined on $\Gamma$.
		Then the space $X$ is ${\bf SM}(\gamma_0)$ for some $\gamma_0 \in \Gamma$ if, and only if,
		whenever $x, y \in X_+$ are such that $x(\gamma) = y(\gamma)$ for $\gamma \in \Gamma \setminus \{ \gamma_0 \}$
		and $x(\gamma_0) = 0 < y(\gamma_0)$, it follows that
		$\lVert x \rVert < \lVert y \rVert$.}
	\end{lemma}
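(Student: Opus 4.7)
The forward implication is trivial: the hypothesis on the right, namely $x(\gamma) = y(\gamma)$ for $\gamma \neq \gamma_0$ together with $x(\gamma_0) = 0 < y(\gamma_0)$, is a particular case of the hypothesis defining ${\bf SM}(\gamma_0)$, so I would simply invoke the latter to conclude $\|x\| < \|y\|$.

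For the converse, I would assume the right-hand condition and take arbitrary $x, y \in X_+$ with $x(\gamma) \leq y(\gamma)$ for $\gamma \neq \gamma_0$ and $a := x(\gamma_0) < y(\gamma_0) =: b$, noting that $b > 0$ automatically. My plan has two steps. First, I would reduce to the case where the two elements differ only at the coordinate $\gamma_0$: define $\tilde{x}$ by $\tilde{x}(\gamma) = y(\gamma)$ for $\gamma \neq \gamma_0$ and $\tilde{x}(\gamma_0) = a$. Since $x \leq \tilde{x}$ pointwise and the norm enjoys the ideal property, $\|x\| \leq \|\tilde{x}\|$, so it is enough to prove $\|\tilde{x}\| < \|y\|$.

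Second, and this is the substance of the argument, I would run a one-dimensional convexity argument along the $\gamma_0$-axis. Set $w := y - b\,\boldsymbol{e}_{\gamma_0} \in X_+$ (the disjoint part of $y$) and $\phi(t) := \|w + t\,\boldsymbol{e}_{\gamma_0}\|$ for $t \geq 0$. Then $\phi$ is convex, as the norm of an affine-in-$t$ vector, and non-decreasing by the ideal property (since $w$ and $\boldsymbol{e}_{\gamma_0}$ have disjoint supports). The hypothesis on the right, applied to the pair $w$ and $w + b\,\boldsymbol{e}_{\gamma_0} = y$, yields the \emph{single} strict inequality $\phi(0) < \phi(b)$. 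Convexity together with $0 \leq a < b$ then gives
\[
\phi(a) \;\leq\; \bigl(1 - a/b\bigr)\phi(0) + (a/b)\,\phi(b) \;<\; \phi(b),
\]
where strictness comes from $1 - a/b > 0$ and $\phi(0) < \phi(b)$. Chaining with Step~1 yields $\|x\| \leq \|\tilde{x}\| = \phi(a) < \phi(b) = \|y\|$, as required.

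The only mild obstacle I anticipate is exactly the passage from the very restricted form of the right-hand condition---a transition from a \emph{vanishing} coordinate to a positive one---to strict monotonicity for arbitrary positive entries at $\gamma_0$. The convexity of $\phi$ is the mechanism that propagates the strict inequality from the single endpoint $t = 0$ to the whole interval $(0, b]$, and this is the only point at which the argument uses anything beyond the ideal property of the norm.
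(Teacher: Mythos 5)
Your proof is correct and rests on the same mechanism as the paper's: convexity of the norm along the $\gamma_0$-axis propagates the single strict inequality at $t=0$ to all of $(0,b]$. The paper organizes this contrapositively (normalizing to $\lVert x\rVert=\lVert y\rVert=1$ and deriving a contradiction from $\lVert \lambda z+(1-\lambda)\widetilde{z}\rVert<1$) and modifies $x$ upward rather than $y$ downward in the reduction step, but these are cosmetic differences.
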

	\begin{proof}
		One implication is obvious, so let us focus on the second one.
		To do this, suppose that the space $X$ is not ${\bf SM}(\gamma_0)$ for some $\gamma_0 \in \Gamma$.
		This means that there are $x$ and $y$ in $X_+$ such that $x(\gamma) \leqslant y(\gamma)$
		for $\gamma \in \Gamma \setminus \{ \gamma_0 \}$ and $x(\gamma_0) < y(\gamma_0)$,
		but $\lVert x \rVert = \lVert y \rVert = 1$.
		Clearly, we can assume that $x(\gamma_0) > 0$, because otherwise there is nothing to prove. 
       	Let
            \begin{equation*}
               z\left( \gamma\right) \coloneqq \left\{ 
               \begin{array}{ccc}
                    x \left( \gamma\right) & \text{if} & \gamma\neq \gamma_0 \\ 
                    y \left( \gamma_0\right) & \text{if} & \gamma=\gamma_0.
               \end{array}
                     \right.
             \end{equation*}
      	We have $x \leqslant z\leqslant y$, whence $\left\Vert z \right\Vert = 1$.
      	Take 
                  \begin{equation*}
                   \widetilde{z} \left( \gamma\right) \coloneqq \left\{ 
                        \begin{array}{ccc}
                         x \left( \gamma\right) & \text{if} & \gamma\neq \gamma_0 \\ 
                           0 & \text{if} & \gamma=\gamma_0.
                          \end{array}
                           \right.
                     \end{equation*}
        Plainly, $\widetilde{z}(\gamma) = z(\gamma)$ for $\gamma \neq \gamma_0$ with $\widetilde{z}(\gamma_0) = 0$ and $z(\gamma_0) > 0$.
        Then, due to the ideal property of $X$, $\left\Vert \widetilde{z} \right\Vert \leqslant \left\Vert z \right\Vert$.
        We claim that $\norm{\widetilde{z}} = \norm{z}$.
        Suppose, for the sake of contradiction, that $\left\Vert \widetilde{z} \right\Vert < \norm{z}$.
        Take $0 < \lambda < 1$ with $\lambda y(\gamma_0) = x(\gamma_0)$.
	    Invoking the primordial wisdom that balls in normed spaces are convex, we have
        \begin{equation*}
        	\norm{\lambda z + (1 - \lambda) \widetilde{z}}
        	\leqslant \lambda \norm{z} + (1 - \lambda) \norm{\widetilde{z}}
        	< 1.
        \end{equation*}
    	On the other hand,
    	\begin{equation*}
    		\norm{\lambda z + (1 - \lambda) \widetilde{z}}
    			= \lVert \sum_{\gamma \neq \gamma_0} x(\gamma) \boldsymbol{e}_{\gamma} + x(\gamma_0) \boldsymbol{e}_{\gamma_0} \rVert
    			= \norm{x}
    			= 1,
    	\end{equation*}
    	which is obviously nonsense.
        Therefore, $\left\Vert\widetilde{z}\right\Vert = \norm{z}$ and our claim follows.
	\end{proof}
	
	For much more information and relationships between various rotundity and monotonicity properties,
	we refer to the papers \cite{CHKM98}, \cite{HKM00}, Megginson's book \cite{Meg98}, Lin's monograph \cite[Chapter~2]{Lin04}
	and references therein.
	
	\subsection{Direct sums} \label{direct sums}
	
	Recall the following definition, which is a direct generalization of the well-known construction
	of the $\ell_p$-direct sum of a family of Banach spaces (see, for example, \cite[pp.~59--70]{Meg98};
	{\it cf}. \cite[p.~211]{Pel60} and \cite[Example~E, p.~27]{Wn99}).
	
	\begin{definition}[Direct sum] \label{DEFINITION: direct sum}
		{\it Let $\mathcal{X} = \{ X_{\gamma} \}_{\gamma \in \Gamma}$ be a countable family of Banach spaces.
		Moreover, let $\mathcal{E}$ be a Banach sequence space over $\Gamma$.
		By the {\bf $\mathcal{E}$-direct sum} of the family $\mathcal{X}$ we will understand here a vector space
		\begin{equation*}
			( \bigoplus_{\gamma \in \Gamma} X_{\gamma} )_{\mathcal{E}}
				\coloneqq \left\{ \{ x(\gamma) \}_{\gamma \in \Gamma} \in \prod_{\gamma \in \Gamma} X_{\gamma}
				\colon \sum_{\gamma \in \Gamma} \norm{ x(\gamma) }_{\gamma} \boldsymbol{e}_{\gamma} \in \mathcal{E} \right\}
		\end{equation*}
		furnished with the norm}
		\begin{equation*}
			\norm{ \{ x(\gamma) \}_{\gamma \in \Gamma} }
				\coloneqq \norm{\sum_{\gamma \in \Gamma} \norm{x(\gamma)}_{\gamma} \boldsymbol{e}_{\gamma}}_{\mathcal{E}}.
		\end{equation*}
	\end{definition}

	It is straightforward to see that when equipped with the coordinate-wise defined addition and scalar multiplication
	$( \bigoplus_{\gamma \in \Gamma} X_{\gamma} )_{\mathcal{E}}$ become a Banach space itself.
	
	\begin{remark}
		It is clear that when the family $\mathcal{X}$ from Definition~\ref{DEFINITION: direct sum} is \enquote{constant}, that is,
		$X_{\gamma} = X$ for all $\gamma \in \Gamma$ and some Banach space $X$, the direct sum construction $( \bigoplus_{\gamma \in \Gamma} X )_{\mathcal{E}}$
		degenerates to the well-known {\it K{\" o}the--Bochner sequence space} $\mathcal{E}(X)$.
		A lot is known about the structure of these spaces (see, for example, the monographs by Cembranos and Mendoza \cite{CM97} and Lin \cite{Lin04};
		see also \cite{CP96}, \cite{CHM96}, \cite{Gre69}, \cite{Kol03}, \cite{KP97}, \cite{Leo76}, \cite{LL85}, \cite{Now07} and \cite{ST80}).
		Of course, in a very particular situation when $X$ is just $\mathbb{R}$, the space $\mathcal{E}(X)$ is isometrically isomorphic to $\mathcal{E}$.
		\demo
	\end{remark}

	\begin{remark}
		Note that Definition~\ref{DEFINITION: direct sum} differs slightly from that used, for example, in \cite[Definition~1.5]{Lau01}
		({\it cf}. \cite[p.~14]{DK16}).
		Indeed, on the one hand, in any Banach sequence space $X$ the sequence $\{ \boldsymbol{e}_{\gamma} \}_{\gamma \in \Gamma}$
		is an unconditional basis of its separable part $X_o$.
		However, Definition~\ref{DEFINITION: direct sum} includes also non-separable spaces like, to name just one, $\ell_{\infty}$.
		On the other, any Banach space $X$ with a unconditional basis, say $\{ \boldsymbol{x}_{\gamma} \}_{\gamma \in \Gamma}$,
		can be seen as a Banach sequence space itself.
		To see this, let us consider a vector space $\text{BSS}[X]$ of all sequences $\{a(\gamma)\}_{\gamma \in \Gamma}$ of scalars with
		$\sum_{\gamma \in \Gamma} a(\gamma) \boldsymbol{x}_{\gamma} \in X$.
		If we endow $\text{BSS}[X]$ with the norm
		$\norm{\{a(\gamma)\}_{\gamma \in \Gamma}}_{\text{BSS}[X]} \coloneqq \sup \{ \Vert \sum_{\gamma \in \Gamma} b(\gamma) \boldsymbol{x}_{\gamma} \Vert_X
		\colon \abs{b(\gamma)} \leqslant \abs{a(\gamma)} \}$,
		then it is routine to verify that $\text{BSS}[X]$ is a Banach sequence space isomorphic to $X$.
		\demo
	\end{remark}
	
	Before moving any further, let us establish a few things.
	First, some examples\footnote{We urge the Readers not interested in concrete examples to skip this part.
	Otherwise, a conglomerate in the form of Table~\ref{TABLE : H} along with Theorem~\ref{THM: main theorem}
	may produce a multitude of examples of direct sums $( \bigoplus_{\gamma \in \Gamma} X_{\gamma} )_{\mathcal{E}}$
	which posses (or not) some Kadets--Klee properties. Since our goal is to build a general framework rather than to
	implement specific situations, we will not return to this topic later.}.
	
	\begin{example}[Orlicz spaces]
		Let $\ell_F$ be an {\it Orlicz sequence space} (see \cite{Ch96}, \cite[Chapter~4]{LT77} and \cite{Mal89} for details).
		Then, by the {\bf $\ell_F$-direct sum} of the family $\{X_n\}_{n=1}^{\infty}$ of Banach spaces we understand
             \begin{equation} \label{l_F - direct sum}
			( \bigoplus_{n=1}^{\infty} X_{n} )_{\ell_F}
			\coloneqq \left\{ \{ x(n) \}_{n=1}^{\infty} \in \prod_{n=1}^{\infty} X_{n} \colon \sum_{n=1}^{\infty} F( \lambda \norm{x(n)}_{n}) < \infty \text{ for some $\lambda > 0$} \right\}
		\end{equation}		
		with the {\bf Luxemburg--Nakano norm}
		$$\norm{ \{ x(n) \}_{n=1}^{\infty} } \coloneqq \inf \left\{ \lambda > 0 \colon \sum_{n=1}^{\infty} F(\norm{x(n)}_{n} / \lambda) \leqslant 1 \right\}.$$
		(One can also consider the space $\ell_F$ equipped with an equivalent, but in general not equal, norm called the {\it Orlicz norm};
		see \cite[Theorem~8.6, p.~55]{Mal89}.)
		Below we admit the degerated Orlicz functions, which may vanish outside zero and may jump to infinity, whence we need the notations
        \begin{equation}\label{a_phi}
            a_F \coloneqq \sup \{t \geqslant 0 \colon F(t) = 0 \}
            \quad \text{ and } \quad
            b_F \coloneqq \sup\{t \geqslant 0 \colon F(t) < \infty \}.
        \end{equation}
    	Following \cite[Section~12]{Mal89}, we will call them the {\bf Young functions}.
		In particular, by taking as a Young function $F$ a power function, the above construction degenerates to the well-known object.
		Indeed, according to \eqref{l_F - direct sum}, when $F(t) = t^{p}$ for $1 \leqslant p < \infty$, then
		\begin{equation*}
			( \bigoplus_{n=1}^{\infty} X_{n} )_{\ell_{p}}
				= \left\{ \{ x(n) \}_{n=1}^{\infty} \in \prod_{n=1}^{\infty} X_{n} \colon \sum_{n=1}^{\infty} \norm{x(n)}_{n}^{p} < \infty \right\}
		\end{equation*}
		with the norm $\norm{ \{ x(n) \}_{n=1}^{\infty} } = ( \sum_{n=1}^{\infty} \norm{x(n)}_{n}^{p} )^{1/p}$.
		On the other hand, if $F(t) = 0$ for $0 \leqslant t \leqslant 1$ and $F(t) = \infty$ for $t > 1$ (which corresponds to $p = \infty$),
		the formula \eqref{l_F - direct sum} gives
		\begin{equation*}
			( \bigoplus_{n=1}^{\infty} X_{n} )_{\ell_{\infty}}
				= \left\{ \{ x(n) \}_{n=1}^{\infty} \in \prod_{n=1}^{\infty} X_{n} \colon \sup_{n} \norm{x(n)}_{n} < \infty \right\}
		\end{equation*}
		with the norm $\norm{ \{ x(n) \}_{n=1}^{\infty} } = \sup_{n \in \mathbb{N}} \norm{x(n)}_{n}$.

        Recall also that the Orlicz function $\varphi$ satisfies the so-called {\bf $\delta_2$-condition} for small arguments provided
        $\limsup_{t \rightarrow 0} F(2t) / F(t) < \infty$.
        In particular, this condition implies that $a_F = 0$.
        It is part of a common knowledge that an Orlicz space is separable if, and only if, the Young function $F$ satisfies
        the $\delta_2$-condition (see \cite[Proposition~4.a.4, p.~138]{LT77} and \cite[p.~22]{Mal89}).
		\demo
	\end{example}

	\begin{example}[Lorentz spaces]
		Let $d(w,p)$ be the {\it Lorentz sequence space} (see \cite[pp.~175--179]{LT77}).
		Here, $1 \leqslant p < \infty$ and $\{ w_n \}_{n=1}^{\infty}$ is a decreasing sequence of non-negative
		real numbers such that $w_1 = 1$ and $\lim_{n \rightarrow \infty} w_n = 0$.
		The {\bf $d(w,p)$-direct sum} of the family $\{X_n\}_{n=1}^{\infty}$ of Banach spaces we understood as
		\begin{equation*}
			( \bigoplus_{n=1}^{\infty} X_{n} )_{d(w,p)}
			\coloneqq \left\{ \{ x(n) \}_{n=1}^{\infty} \in \prod_{n=1}^{\infty} X_{n} \colon \sum_{n=1}^{\infty} \left( \norm{x(n)}_{n}^{\star} \right)^p w_n < \infty \right\}
		\end{equation*}
		together with the norm $\norm{ \{ x(n) \}_{n=1}^{\infty} } \coloneqq ( \sum_{n=1}^{\infty} \left( \norm{x(n)}_{n}^{\star} \right)^p w_n )^{1/p}$,
		where $\{\norm{x(n)}_{n}^{\star}\}_{n=1}^{\infty}$ is the non-increasing rearrangement of the sequence $\{\norm{x(n)}_{n}\}_{n=1}^{\infty}$
		(see \cite{BS88}).
		\demo
	\end{example}

	\begin{example}[Nakano spaces]
		Let $\{ p_n \}_{n=1}^{\infty}$ be a sequence of positive integers such that $1 \leqslant p \leqslant \infty$.
		Let $\ell_{\{ p_n \}}$ be the {\it Nakano sequence space} (alias {\it variable exponent Lebesgue space}).
		The {\bf $\ell_{\{ p_n \}}$-direct sum} of the family $\{X_n\}_{n=1}^{\infty}$ of Banach spaces is defined as
		\begin{equation*}
			( \bigoplus_{n=1}^{\infty} X_{n} )_{\ell_{\{ p_n \}}} = \left\{ \{ x(n) \}_{n=1}^{\infty} \in \prod_{n=1}^{\infty} X_{n}
				\colon \sum_{n=1}^{\infty} \left( \lambda\norm{x(n)}_{n} \right)^{p_n} < \infty  \text{ for some } \lambda>0 \right\}
		\end{equation*}
		with the Luxemburg--Nakano norm
		$\norm{ \{ x(n) \}_{n=1}^{\infty} } \coloneqq \inf\{ \lambda > 0 \colon \sum_{n=1}^{\infty} \left( \norm{x(n)}_{n} / \lambda \right)^{p_n} \leqslant 1 \}$. 
		Note that if $\limsup_{n \rightarrow \infty} p_n = \infty$, the space $\ell_{\{ p_n \}}$ is not separable.
		\demo
	\end{example}

	\begin{example}[Ces{\' a}ro spaces]
		Let $ces_p$ with $1 \leq p < \infty$ denotes the {\it Ces{\' a}ro sequence space} (see, for example, \cite{FHS10}, \cite{KT17} and \cite{KT24} and their references).
		The {\bf $ces_p$-direct sum} of the family $\{X_n\}_{n=1}^{\infty}$ of Banach spaces is defined as
		\begin{equation*}
			( \bigoplus_{n=1}^{\infty} X_{n} )_{ces_p} \coloneqq \left\{ \{ x(n) \}_{n=1}^{\infty} \in \prod_{n=1}^{\infty} X_{n}
			\colon \sum_{n=1}^{\infty} \left( \frac{1}{n} \sum_{k=1}^n \norm{x(k)}_{k} \right) \boldsymbol{e}_n \in \ell_{p} \right\}
		\end{equation*}
		with the norm $\norm{ \{ x(n) \}_{n=1}^{\infty} } \coloneqq (\sum_{n=1}^{\infty} (\frac{1}{n} \sum_{k=1}^n \norm{x(k)}_{k} )^{p} )^{1/p}$.
		The end-point space $ces_{\infty}$ (note that $ces_1$ is trivial) is not separable, so the corresponding
		direct sum should be understand as
		\begin{equation*}
			( \bigoplus_{n=1}^{\infty} X_{n} )_{ces_{\infty}}
				\coloneqq \left\{ \{ x(n) \}_{n=1}^{\infty} \in \prod_{n=1}^{\infty} X_{n}
					\colon \sup_{n} \frac{1}{n} \sum_{k=1}^n \norm{x(k)}_{k} < \infty \right\}
		\end{equation*}
		with the norm $\norm{ \{ x(n) \}_{n=1}^{\infty} } \coloneqq \sup_{n \in \mathbb{N}} \frac{1}{n} \sum_{k=1}^n \norm{x(k)}_{k}$.
		\demo
	\end{example}

	\begin{notation}[Some useful notation regarding direct sums] \label{NOTATION : direct sums}
		As a general rule, we will also treat the elements $x$ living inside $( \bigoplus_{\gamma \in \Gamma} X_{\gamma} )_{\mathcal{E}}$
		as functions defined on $\Gamma$ with values in $X_{\gamma}$'s.
		Let us denote by 
   		$\lfloor \bullet \rceil \colon ( \bigoplus_{\gamma \in \Gamma} X_{\gamma} )_{\mathcal{E}} \rightarrow \mathcal{E}$
		the mapping
		\begin{equation}
			\lfloor \bullet \rceil \colon x \mapsto \left[ \gamma \mapsto \lfloor x \rceil(\gamma) \coloneqq \norm{x(\gamma)}_{\gamma} \boldsymbol{e}_{\gamma} \right].
		\end{equation}
		In other words,
        \begin{equation*}
                \lfloor x \rceil = \sum_{\gamma \in \Gamma} \norm{x(\gamma)}_{\gamma} \boldsymbol{e}_{\gamma}.
        \end{equation*}
		Clearly, the mapping $\lfloor \bullet \rceil$ is not(!) linear, but it is sublinear and
		\begin{equation*}
			\norm{\lfloor \bullet \rceil \colon ( \bigoplus_{\gamma \in \Gamma} X_{\gamma} )_{\mathcal{E}} \rightarrow \mathcal{E}}
			 	= \sup \left\{ \norm{\lfloor x \rceil}_{\mathcal{E}} \colon x \in \text{Ball} \, \left[ ( \bigoplus_{\gamma \in \Gamma} X_{\gamma} )_{\mathcal{E}} \right] \right\}
				= 1.
		\end{equation*}
		Further, let 
         \begin{equation*}
             \pi_{\gamma} \colon ( \bigoplus_{\gamma \in \Gamma} X_{\gamma} )_{\mathcal{E}} \rightarrow X_{\gamma}
         \end{equation*}        
		be the {\bf projection onto $\gamma^{\text{th}}$-coordinate}, that is, 
        $$\pi_{\gamma}\left( x \right) \coloneqq x(\gamma)$$
		for $\gamma \in \Gamma$ and $x \in ( \bigoplus_{\gamma \in \Gamma} X_{\gamma} )_{\mathcal{E}}$,
		while 
        \begin{equation*}
            j_{\gamma} \colon X_{\gamma} \rightarrow ( \bigoplus_{\gamma \in \Gamma} X_{\gamma} )_{\mathcal{E}} 
        \end{equation*}
        be the
		{\bf $\gamma^{\text{th}}$-coordinate embedding}, that is, $$j_{\gamma}(x) \coloneqq x \boldsymbol{e}_{\gamma}$$
		for $\gamma \in \Gamma$ and $x \in X_{\gamma}$.
		Now, it is straightforward to see that $X_{\gamma}$ is isometrically isomorphic to
		$j_{\gamma}X_{\gamma} \subset ( \bigoplus_{\gamma \in \Gamma} X_{\gamma} )_{\mathcal{E}}$
		and this subspace is complemented via
       \begin{equation*}
           	j_{\gamma} \circ \pi_{\gamma} \colon ( \bigoplus_{\gamma \in \Gamma} X_{\gamma} )_{\mathcal{E}} \rightarrow ( \bigoplus_{\gamma \in \Gamma} X_{\gamma} )_{\mathcal{E}}.
       \end{equation*}	
		Similarly, for a given sequence $\{ {x}_{\gamma} \}_{\gamma \in \Gamma}$
		of norm one vectors with ${x}_{\gamma} \in X_{\gamma}$ for $\gamma \in \Gamma$,
		one can define the mapping 
       \begin{equation*}
            j_{\mathcal{E}} \colon \mathcal{E} \rightarrow ( \bigoplus_{\gamma \in \Gamma} X_{\gamma} )_{\mathcal{E}}
       \end{equation*}     
       in the following way 
       $$j_{\mathcal{E}} ( \sum_{\gamma \in \Gamma} a(\gamma) \boldsymbol{e}_{\gamma} ) \coloneqq \{ a(\gamma) {x}_{\gamma} \}_{\gamma \in \Gamma}.$$
		Hereby, $\sum_{\gamma \in \Gamma} a(\gamma) \boldsymbol{e}_{\gamma} \in \mathcal{E}$.
		For this reason, the space $\mathcal{E}$ is isometrically isomorphic to a complemented subspace of
		$( \bigoplus_{\gamma \in \Gamma} X_{\gamma} )_{\mathcal{E}}$.
		Note that formally the mapping $j_{\mathcal{E}}$ depends upon the sequence $\{ {x}_{\gamma} \}_{\gamma \in \Gamma}$.
		However, this is basically irrelevant, because we can select one such a sequence once and for all.
		\demo
	\end{notation}

	Let us conclude this section by saying a few words about the duality of direct sums.
	Following Lausten \cite{Lau01}, take $x = \{ x_{\gamma} \}_{\gamma \in \Gamma}$ from $( \bigoplus_{\gamma \in \Gamma} X_{\gamma} )_{\mathcal{E}}$
	and $\varphi = \{ \varphi_{\gamma} \}_{\gamma \in \Gamma}$ from $( \bigoplus_{\gamma \in \Gamma} X_{\gamma}^{*} )_{\mathcal{E}^{\times}}$.
	Then, using H{\" o}lder--Rogers's inequality, we have
	\begin{align*}
		\sum_{\gamma \in \Gamma} \abs{ \langle \varphi_{\gamma}, x_{\gamma} \rangle }
			& \leqslant \sum_{\gamma \in \Gamma} \norm{x_{\gamma}}_{X_{\gamma}} \norm{\varphi_{\gamma}}_{X_{\gamma}^{*}} \\
			& \leqslant \norm{ \sum_{\gamma \in \Gamma} \norm{x_{\gamma}}_{X_{\gamma}} \boldsymbol{e}_{\gamma} }_{\mathcal{E}}
				\norm{ \sum_{\gamma \in \Gamma} \norm{\varphi_{\gamma}}_{X_{\gamma}^{*}} \boldsymbol{e}_{\gamma} }_{\mathcal{E}^{\times}} \\
			& = \norm{x} \norm{\varphi}.
	\end{align*}
	This means that if we define a linear form $\Upsilon(\varphi)$ on $( \bigoplus_{\gamma \in \Gamma} X_{\gamma} )_{\mathcal{E}}$
	in the following way $\langle \Upsilon(\varphi), x \rangle \coloneqq \sum_{\gamma \in \Gamma} \langle \varphi_{\gamma}, x_{\gamma} \rangle$,
	then $\norm{\Upsilon(\varphi)} \leqslant \norm{\varphi}$.
	Thus, the mapping $\Upsilon \colon \varphi \mapsto \Upsilon(\varphi)$ is a norm one operator from
	$( \bigoplus_{\gamma \in \Gamma} X_{\gamma}^{*} )_{\mathcal{E}^{\times}}$ into(!) $\left[ ( \bigoplus_{\gamma \in \Gamma} X_{\gamma} )_{\mathcal{E}} \right]^{*}$.
	The question of when the mapping $\Upsilon$ is surjective is resolved by the following
	
	\begin{proposition}[Duality of direct sums] \label{PROPOSITION : duality of direct sums}
		{\it Let $\{ X_{\gamma} \}_{\gamma \in \Gamma}$ be a family of Banach spaces.
		Further, let $\mathcal{E}$ be a separable Banach sequence space defined on $\Gamma$.
		Then the topological dual of $( \bigoplus_{\gamma \in \Gamma} X_{\gamma} )_{\mathcal{E}}$ is
		naturally isometrically isomorphic to $( \bigoplus_{\gamma \in \Gamma} X_{\gamma}^{*} )_{\mathcal{E}^{\times}}$.}
	\end{proposition}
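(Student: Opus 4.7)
The plan is to complete the discussion immediately preceding the proposition by proving that the canonical operator $\Upsilon$ constructed there is not only contractive but is in fact an isometric bijection. Two things remain, namely the reverse norm estimate $\norm{\Upsilon(\varphi)} \geqslant \norm{\varphi}$ and the surjectivity onto the whole dual space.

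For the isometric part I would fix a non-zero $\varphi = \{ \varphi_{\gamma} \}_{\gamma \in \Gamma}$ from $( \bigoplus_{\gamma \in \Gamma} X_{\gamma}^{*} )_{\mathcal{E}^{\times}}$ and $\varepsilon > 0$. By the definition of the dual norm on each $X_{\gamma}^{*}$ one can pick unit vectors $x_{\gamma} \in X_{\gamma}$ with $\langle \varphi_{\gamma}, x_{\gamma} \rangle > (1-\varepsilon) \norm{\varphi_{\gamma}}_{X_{\gamma}^{*}}$, and by definition of the K{\"o}the-dual norm on $\mathcal{E}^{\times}$ one can pick a nonnegative scalar sequence $\{ a(\gamma) \}_{\gamma \in \Gamma}$ with $\norm{ \sum_{\gamma \in \Gamma} a(\gamma) \boldsymbol{e}_{\gamma} }_{\mathcal{E}} \leqslant 1$ such that $\sum_{\gamma \in \Gamma} a(\gamma) \norm{\varphi_{\gamma}}_{X_{\gamma}^{*}} > (1-\varepsilon) \norm{\varphi}$. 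Then $x \coloneqq \{ a(\gamma) x_{\gamma} \}_{\gamma \in \Gamma}$ lies in the unit ball of $( \bigoplus_{\gamma \in \Gamma} X_{\gamma} )_{\mathcal{E}}$ and
\begin{equation*}
\langle \Upsilon(\varphi), x \rangle
= \sum_{\gamma \in \Gamma} a(\gamma) \langle \varphi_{\gamma}, x_{\gamma} \rangle
> (1-\varepsilon)^{2} \norm{\varphi},
\end{equation*}
so letting $\varepsilon \searrow 0$ forces $\norm{\Upsilon(\varphi)} \geqslant \norm{\varphi}$, and hence equality.

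The main obstacle is the surjectivity of $\Upsilon$, where the separability hypothesis on $\mathcal{E}$ really enters. Given a bounded linear functional $\Phi$ on $( \bigoplus_{\gamma \in \Gamma} X_{\gamma} )_{\mathcal{E}}$, I would reconstruct its putative coordinates by pulling back along the isometric coordinate embeddings $j_{\gamma}$ from Notation~\ref{NOTATION : direct sums}: set $\varphi_{\gamma}(x) \coloneqq \langle \Phi, j_{\gamma}(x) \rangle$ for $x \in X_{\gamma}$. Since $\norm{j_{\gamma}} = 1$, each $\varphi_{\gamma}$ is a bounded linear form on $X_{\gamma}$ with $\norm{\varphi_{\gamma}}_{X_{\gamma}^{*}} \leqslant \norm{\Phi}$. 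To check that the resulting $\varphi \coloneqq \{ \varphi_{\gamma} \}_{\gamma \in \Gamma}$ belongs to $( \bigoplus_{\gamma \in \Gamma} X_{\gamma}^{*} )_{\mathcal{E}^{\times}}$ with $\norm{\varphi} \leqslant \norm{\Phi}$, I would run the same type of test as above but with $\Phi$ in place of $\Upsilon(\varphi)$: for any finite $F \subset \Gamma$ and any scalar sequence $\{ a(\gamma) \}_{\gamma \in F}$ with $\norm{ \sum_{\gamma \in F} a(\gamma) \boldsymbol{e}_{\gamma} }_{\mathcal{E}} \leqslant 1$, evaluating $\Phi$ on $\sum_{\gamma \in F} a(\gamma) j_{\gamma}(x_{\gamma})$ for near-optimal $x_{\gamma}$ yields, up to an $\varepsilon$-error, the sum $\sum_{\gamma \in F} a(\gamma) \norm{\varphi_{\gamma}}_{X_{\gamma}^{*}}$; passing to the supremum over all such $F$ and $\{a(\gamma)\}$ gives $\sum_{\gamma \in \Gamma} \norm{\varphi_{\gamma}}_{X_{\gamma}^{*}} \boldsymbol{e}_{\gamma} \in \mathcal{E}^{\times}$ together with the claimed norm bound.

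It then remains to prove $\Upsilon(\varphi) = \Phi$. On vectors with finite support this identity is immediate from linearity and from the definition of $\varphi_{\gamma}$. To promote it to an arbitrary $x \in ( \bigoplus_{\gamma \in \Gamma} X_{\gamma} )_{\mathcal{E}}$ I would invoke the fact, recalled in Section~\ref{SUBSECTION : BSS}, that separability of $\mathcal{E}$ is equivalent to its order continuity; this guarantees that $\norm{ \sum_{\gamma \notin F} \norm{x(\gamma)}_{\gamma} \boldsymbol{e}_{\gamma} }_{\mathcal{E}} \to 0$ as $F$ exhausts $\Gamma$ along finite subsets, so the finitely supported vectors are norm-dense in $( \bigoplus_{\gamma \in \Gamma} X_{\gamma} )_{\mathcal{E}}$. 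Since both $\Phi$ and $\Upsilon(\varphi)$ are norm-continuous functionals agreeing on this dense subset, they must coincide, which completes the proof that $\Upsilon$ is the desired natural isometric isomorphism.
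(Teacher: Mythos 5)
Your proposal is correct and follows essentially the same route as the paper: restrict the functional along the coordinate embeddings, verify that the resulting sequence of norms lies in $\mathcal{E}^{\times}$ by testing against elements of $\text{Ball}(\mathcal{E})$, and identify the functional on all of $( \bigoplus_{\gamma \in \Gamma} X_{\gamma} )_{\mathcal{E}}$ via the density of finitely supported vectors, which is exactly where separability (order continuity) of $\mathcal{E}$ enters. The only substantive difference is that you use $(1-\varepsilon)$-almost-norming vectors $x_{\gamma}$, whereas the paper picks $f^{(\gamma)} \in \text{Ball}(X_{\gamma})$ with $\abs{\langle f^{(\gamma)}, x_{\gamma}^{*} \rangle} = \Vert x_{\gamma}^{*} \Vert_{X_{\gamma}^{*}}$ exactly --- an attainment that is not guaranteed in a general Banach space --- so your version is the more careful one on this point.
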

	\begin{proof}
		Take $x^{*}$ from $\left[ ( \bigoplus_{\gamma \in \Gamma} X_{\gamma} )_{\mathcal{E}} \right]^{*}$.
		We claim that $x^{*}$ can be uniquely represented as $x^{*} = \{ x_{\gamma}^{*} \}_{\gamma \in \Gamma}$ for some $x_{\gamma}^{*} \in X_{\gamma}^{*}$.
		Too see this, note that the set of all finite linear combinations of vectors from
		$\{ x_{\gamma} \otimes \boldsymbol{e}_{\gamma} \colon x_{\gamma} \in X_{\gamma} \text{ and } \gamma \in \Gamma \}$
		is dense in $( \bigoplus_{\gamma \in \Gamma} X_{\gamma} )_{\mathcal{E}}$.
		In fact, take $x = \{ x_{\gamma} \}_{\gamma \in \Gamma}$ from $( \bigoplus_{\gamma \in \Gamma} X_{\gamma} )_{\mathcal{E}}$
		and let $\{ \Gamma_n \}_{n=1}^{\infty}$ be a family of finite subsets of $\Gamma$ such that $\Gamma_1 \subset \Gamma_2 \subset ...$
		and $\bigcup_{n=1}^{\infty} \Gamma_n = \Gamma$.
		Note that since the space $\mathcal{E}$ is order continuous, so
		\begin{equation*}
			\norm{x - \sum_{\gamma \in \Gamma_n} x_{\gamma} \otimes \boldsymbol{e}_{\gamma}}
				= \norm{\sum_{\gamma \in \Gamma \setminus \Gamma_n} x_{\gamma} \otimes \boldsymbol{e}_{\gamma}}
				= \norm{\sum_{\gamma \in \Gamma \setminus \Gamma_n} \norm{x_{\gamma}}_{X_{\gamma}} \boldsymbol{e}_{\gamma}}_{\mathcal{E}} \rightarrow 0.
		\end{equation*}
		Denoting $x_{\gamma}^{*} \coloneqq \left. x^{*} \right|_{X_\gamma}$ we get 
		\begin{align*}
			x^{*}(x)
				& = x^{*} ( \sum_{\gamma \in \Gamma} x_{\gamma} \otimes \boldsymbol{e}_{\gamma} ) \\
				& = \sum_{\gamma \in \Gamma} x^{*}(x_{\gamma}) \boldsymbol{e}_{\gamma} \\
				& = \sum_{\gamma \in \Gamma} x_{\gamma}^{*}(x) \boldsymbol{e}_{\gamma} \\
				& = ( \sum_{\gamma \in \Gamma} x_{\gamma}^{*} \otimes \boldsymbol{e}_{\gamma} )(x).
		\end{align*}
		Our claim follows.
		It remains to show that $\sum_{\gamma \in \Gamma} \Vert x_{\gamma}^{*} \Vert_{X_{\gamma}^{*}} \boldsymbol{e}_{\gamma} \in \mathcal{E}^{\times}$.
		Suppose this is not the case.
		Then there exists a sequence $\{ y_n \}_{n=1}^{\infty}$ of functions from $\text{Ball}(\mathcal{E})$ such that
		\begin{equation} \label{EQ : nie w Kothe dualu}
			\sum_{\gamma \in \Gamma} \Vert x_{\gamma}^{*} \Vert_{X_{\gamma}^{*}} \abs{y_n(\gamma)} > n.
		\end{equation}
		Moreover, for every $\gamma \in \Gamma$, there is $f^{(\gamma)} \in \text{Ball}(X_{\gamma})$ with
		$\abs{\langle f^{(\gamma)}, x^{*}_{\gamma} \rangle} = \Vert x_{\gamma}^{*} \Vert_{X_{\gamma}^{*}}$.
		Of course, $\sum_{\gamma \in \Gamma} f^{(\gamma)} y_n(\gamma) \boldsymbol{e}_{\gamma}$ belongs to 
		$\text{Ball} \left[ ( \bigoplus_{\gamma \in \Gamma} X_{\gamma} )_{\mathcal{E}} \right]$ for each $n \in \mathbb{N}$.
		Remembering about \eqref{EQ : nie w Kothe dualu}, we have
		\begin{align*}
			\norm{x^{*}}
				& = \sup \left\{ \abs{ \langle x, x^{*} \rangle} \colon x \in \text{Ball} \left[ ( \bigoplus_{\gamma \in \Gamma} X_{\gamma} )_{\mathcal{E}} \right] \right\} \\
				& \geqslant \sum_{\gamma \in \Gamma} \abs{ \langle f^{(\gamma)} y_n(\gamma), x_{\gamma}^{*} \rangle} \\
				& = \sum_{\gamma \in \Gamma} \abs{y_n(\gamma)} \abs{\langle f^{(\gamma)}, x_{\gamma}^{*} \rangle} \\
				& = \sum_{\gamma \in \Gamma} \abs{y_n(\gamma)} \Vert x_{\gamma}^{*} \Vert_{X_{\gamma}^{*}} > n.
		\end{align*}
		But this is nonsense.
		The proof has been completed.
	\end{proof}

	The structure of direct sums is non-trivial, interesting and has been studied from various perspectives by many authors
	(see, for example, \cite{AA22}, \cite{Day41}, \cite{DD67}, \cite{DPS07}, \cite{DK16}, \cite{DV86}, \cite{HLR91}, \cite{KL92},
	\cite{KT24}, \cite{KL92}, \cite{Lau01}, \cite{MP22} and \cite{San23}).

	\section{{\bf Main results}} \label{SECTION : Main results}
	
	In this section we will first present some general facts about the abstract Kadets--Klee property ${\bf H}(\mathfrak{T})$
	in Banach spaces that we will use later (Section~\ref{SUBSECTION : recollections KKP}), and then we will present results
	about the property ${\bf H}(\mathfrak{T})$ in direct sums (Section~\ref{SUBSECTION : abstract framework}).
	This last section forms the main body of our work.
	
	\subsection{Recollections on Kadets--Klee properties} \label{SUBSECTION : recollections KKP}
	
	Recall that a Banach space $X$ is said to have the {\bf Kadets--Klee property} (briefly, {\bf the property}\footnote{To avoid confusion,
	note that the letter \enquote{H} means essentially nothing and we only use it for historical reasons
	(see, for example, \cite[pp.~220--221]{Meg98} for a more detailed discussion).}
	$\mathbf{H}$) provided for sequences on the unit sphere of $X$ the weak topology and the norm topology agree.
	
	\begin{remark}
		Let $X$ be a separable Banach space.
		It can be deduced from Rosenthal's work \cite{Ros78} that the space $X$ contains no subspace isomorphic to $\ell_1$ if,
		and only if, every bounded subset of $X$ is weakly sequentially dense in its weak closure (precisely, see \cite[Theorem~3]{Ros78}).
		Thus, even though the weak topology is not sequential\footnote{See \cite[Theorem~1.5]{GKP16}; cf. \cite[Proposition~2.5.15, p.~215]{Meg98}.
		Roughly speaking, sequential spaces are those topological spaces whose topology can be completely described by in terms of convergent sequences.
		For example, Fr{\' e}chet--Urysohn spaces and first-countable spaces (in particular, metric spaces) are sequential spaces.},
		in the class of separable Banach spaces without isomorphic copies of $\ell_1$ the property ${\bf H}$ can be formulated in a seemingly
		stronger way, namely, that the weak and the norm topology coincide on the unit sphere in $X$.
		\demo
	\end{remark}

	The abstract variant of the classical Kadets--Klee property ${\bf H}$ mentioned above is as follows

	\begin{definition}[Abstract Kadets--Klee property] \label{DEF: Kadets--Klee property}
		{\it Let $X$ be a Banach space and let $\mathfrak{T}$ be a linear Hausdorff topology on $X$ coarser than the norm topology on $X$.
		We will say that the space $X$ has the {\bf Kadets--Klee property} with respect to $\mathfrak{T}$
		(abbreviated to the {\bf property $\mathbf{H}(\mathfrak{T})$})
		if for sequences on the unit sphere of $X$ the norm topology agree with $\mathfrak{T}$.}
	\end{definition}
	
	Evidently, the Kadets--Klee property ${\bf H}(\textit{weak})$ coincide with the usual Kadets--Klee property ${\bf H}$.
	Moreover, since the topology $\mathfrak{T}$ is assumed to be linear\footnote{If we do not do this, we will end up with two in general non-equivalent
	definitions of the Kadets--Klee property.}, we can reformulate the above definition in the following equivalent way
	
	\begin{remark}[Equivalent definition of $\mathbf{H}(\mathfrak{T})$]
		Let $X$ be a Banach space and let $\mathfrak{T}$ be a linear Hausdorff topology on $X$ coarser than the norm topology on $X$.
		The space $X$ has the Kadets--Klee property with respect to $\mathfrak{T}$ if, and only if, for any sequence $\{x_n\}_{n=1}^{\infty}$
		from $X$ converging with respect to $\mathfrak{T}$ to $x \in X$ such that $\norm{x_{n}}$ converges to $\norm{x}$,
		one has that $x_{n}$ converges to $x$ in the norm topology on $X$.
		\demo
	\end{remark}

	\begin{remark}[On Hausdorffness assumption in Definition~\ref{DEF: Kadets--Klee property}] \label{REMARK : minimal topologies}
		The assumption that the topology $\mathfrak{T}$ is Hausdorff is much more useful than is might seem.
		Due to this, there is a minimal\footnote{In the sense that having the property ${\bf H}(\textit{point-wise})$ implies the property ${\bf H}(\mathfrak{T})$
		for any linear Hausdorff topology $\mathfrak{T}$.}
		Kadets--Klee property in the class of Banach sequence spaces, namely, the property ${\bf H}(\textit{point-wise})$.
		Indeed, this is due to the following observation:
        \vspace{2pt}
		\begin{quote}
		    {\it The minimal Hausdorff topology for the class of Banach sequence spaces coincide - when restricted to the unit ball - with the topology of point-wise convergence}.
		\end{quote}
        \vspace{2pt}
		The proof goes like this.
		Let us equip $X$ with the topology of point-wise convergence.
		Then, as follows from the definition of the product topology, the unit ball $\text{Ball}(X)$ can be seen as a closed subspace of the product $\prod_{n=1}^{\infty} [-1,1]$.
		Since, due to Tychonoff's theorem, the product space $\prod_{n=1}^{\infty} [-1,1]$ is compact, so $\text{Ball}(X)$ is compact as well.
		However, it is well-known that compact Hausdorff topologies are minimal, that is, if $\widetilde{\mathfrak{T}}$ is any coarser Hausdorff
		topology on $\text{Ball}(X)$, then $\widetilde{\mathfrak{T}}$ must actually coincide with the topology of point-wise convergence.
		This completes the proof of the above observation.
		\demo
	\end{remark}
	
	\begin{digression}
		For obvious reasons there is no point in considering the Kadets--Klee properties for topologies finer than the given norm topology.
		Formally, however, nothing prevents us from examining topologies that are incomparable with the given norm topology
		(in fact, the topology of convergence in measure is sometimes incomparable with the norm topology).
		We decided not to do this here.
		Moreover, determining whether a certain type of convergence is topological or not can be sometimes confusing.
		For example, it is a common knowledge that one of the roots for measure theory, that is, the convergence almost everywhere,
		it not(!) topological (see, for example, \cite{Ord66}).
		Because of this, one may wonder about considering the Kadets--Klee type properties with respect to some convergence structures on $X$
		instead of topologies.
		\demo
	\end{digression}

	The lower semi-continuity of the norm will often play an important role.
	Therefore, let us recall the general definition.

	\begin{definition}
	{\it Let $(X, \mathfrak{T})$ be a topological space.
	We say that the real-valued function $f \colon X \rightarrow \mathbb{R}$ is {\bf sequentially lower semi-continuous} with
	respect to $\mathfrak{T}$ if whenever $\{x_n\}_{n=1}^{\infty}$ is a sequence from $X$ converging to $x \in X$ with
	respect to $\mathfrak{T}$, then
     $$f(x) \leqslant \liminf_{n \rightarrow \infty} f(x_n).$$}
	\end{definition}
	
	The following lemma will prove useful many times
	(this is undoubtedly folklore, but based on our knowledge, the only readily available proof can be found in \cite[Proposition~2.2]{DDSS04};
	{\it cf}. \cite{BDDL94} and \cite{Len88}).
	
	\begin{lemma} \label{LEMMA: H tau => lower semicontinuous}
		{\it Let $X$ be a Banach space equipped with a linear Hausdorff topology $\mathfrak{T}$ coarser than the norm topology.
		Suppose that the space $X$ has the property $\mathbf{H}(\mathfrak{T})$.
		Then the norm function $x \mapsto \norm{x}$ is sequentially lower semi-continuous with
		respect to $\mathfrak{T}$.}
	\end{lemma}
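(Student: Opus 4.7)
The plan is to argue by contradiction. Suppose $\{x_n\} \subset X$ with $x_n \overset{\mathfrak{T}}{\rightarrow} x$ but $\alpha := \liminf_n \norm{x_n} < \norm{x}$, and derive an absurdity. By passing to a subsequence I may arrange $\norm{x_n} \to \alpha$. If $\alpha = 0$, then $x_n \to 0$ in norm and hence also in $\mathfrak{T}$ (since $\mathfrak{T}$ is coarser than the norm topology); uniqueness of limits in the Hausdorff topology $\mathfrak{T}$ then forces $x = 0$, contradicting $\norm{x} > 0$. This dispatches the degenerate case cheaply, and it is the place where the Hausdorffness assumption is genuinely used.

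Now assume $\alpha > 0$, so eventually $\norm{x_n} > 0$. The natural normalization is $y_n := (\alpha/\norm{x_n})\, x_n$, which has constant norm $\norm{y_n} = \alpha$ and, by joint continuity of scalar multiplication in the linear topology $\mathfrak{T}$ together with $\alpha/\norm{x_n} \to 1$, still satisfies $y_n \overset{\mathfrak{T}}{\rightarrow} x$. Equivalently, the sphere-valued sequence $\tilde{y}_n := y_n/\alpha$ $\mathfrak{T}$-converges to $u := x/\alpha$, and $\norm{u} > 1$.

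The crux is to rule out this last configuration using $\mathbf{H}(\mathfrak{T})$. Via the equivalent formulation of the Kadets--Klee property — that $\mathfrak{T}$-convergence together with convergence of norms forces norm-convergence — the plan is to manufacture from $\tilde{y}_n$ and $u$ an auxiliary $\mathfrak{T}$-convergent sequence $z_n \overset{\mathfrak{T}}{\rightarrow} z$ satisfying $\norm{z_n} \to \norm{z}$, so that $\mathbf{H}(\mathfrak{T})$ produces norm convergence $z_n \to z$. Unwinding that convergence back to the sphere sequence then forces $\tilde{y}_n \to u$ in norm, which is absurd because $\norm{\tilde{y}_n} = 1 \neq \norm{u}$.

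The main obstacle is the last construction. The obvious candidates — scalings like $\tilde{y}_n/\norm{u}$, or convex combinations $\lambda\tilde{y}_n + (1-\lambda)u/\norm{u}$ with $\lambda$ chosen so the convex combination hits $S_X$ — produce sequences whose $\mathfrak{T}$-limit sits on the sphere but whose own norms stay strictly below the limit's norm, narrowly failing the norm-matching hypothesis of $\mathbf{H}(\mathfrak{T})$. Bridging this gap is the delicate technical step; following the tradition of the arguments in [DDSS04], [BDDL94] and [Len88], I would carry it out by iteratively refining the auxiliary sequence (or, alternatively, applying $\mathbf{H}(\mathfrak{T})$ to a sum $\tilde y_n + v$ for a fixed $v$ along a subsequence on which $\norm{\tilde y_n + v}$ converges, with $v$ chosen to force the limit $\norm{u + v}$ to match) so that the Kadets--Klee hypothesis finally applies and delivers the contradiction.
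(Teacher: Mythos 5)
First, note that the paper does not actually prove this lemma; it is stated as folklore with a pointer to \cite[Proposition~2.2]{DDSS04}, so there is no in-house argument to compare against. Judged on its own terms, your proposal is an honest but incomplete sketch. The reductions you do carry out are correct: passing to a subsequence with $\norm{x_n}\to\alpha<\norm{x}$, dispatching $\alpha=0$ via Hausdorffness, and renormalizing to a unit-sphere sequence $\tilde y_n\overset{\mathfrak{T}}{\to}u$ with $\norm{u}>1$ (the joint continuity of scalar multiplication in a linear topology is used correctly there). But the final step --- manufacturing a sequence to which $\mathbf{H}(\mathfrak{T})$ actually applies --- is precisely where the whole content of the lemma lives, and you leave it as a wish (\enquote{iteratively refining the auxiliary sequence}, or adding a vector $v$ \enquote{chosen to force the limit to match}) without exhibiting the mechanism. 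You correctly diagnose why scalings and convex combinations with $u/\norm{u}$ fail (they push the norms strictly \emph{below} the limit's norm), but diagnosing the failure is not the same as overcoming it. As written, the proof is not complete.

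The missing idea is to move along the chord from $x$ through $x_n$ \emph{past} $x_n$, rather than toward $x$ or toward the origin. Concretely, for large $n$ consider the convex continuous function $\phi_n(t)\coloneqq\norm{x+t(x_n-x)}$. One has $\phi_n(0)=\norm{x}$, $\phi_n(1)=\norm{x_n}<\norm{x}$ eventually, and $\phi_n(t)\geqslant t\norm{x_n-x}-\norm{x}\to\infty$ as $t\to\infty$ (note $\norm{x_n-x}\geqslant\norm{x}-\norm{x_n}\to\norm{x}-\alpha>0$), so the intermediate value theorem yields $t_n\geqslant 1$ with $\phi_n(t_n)=\norm{x}$, and the same lower bound on $\norm{x_n-x}$ shows $t_n\leqslant 2\norm{x}/(\norm{x}-\alpha-\varepsilon)$ is bounded. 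Then $u_n\coloneqq x+t_n(x_n-x)$ satisfies $\norm{u_n}=\norm{x}$ for all large $n$ and $u_n\overset{\mathfrak{T}}{\to}x$ (a bounded sequence of scalars times a $\mathfrak{T}$-null sequence is $\mathfrak{T}$-null, using a base of balanced neighbourhoods), so $\mathbf{H}(\mathfrak{T})$ gives $t_n\norm{x_n-x}\to 0$; this contradicts $t_n\geqslant 1$ and $\norm{x_n-x}$ bounded away from zero. (Your \enquote{add a fixed $v$} variant can in fact be salvaged by an intermediate value argument in $v$ along the segment from $0$ to a perturbation of $-u$, applied to $v\mapsto\liminf_n\norm{\tilde y_n+v}-\norm{u+v}$, but you would still have to supply that argument; it is not a detail.)
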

	
	The following definition generalized the classical notion of order continuity.

	\begin{definition}[Order continuity] \label{DEF : OC Tau}
		{\it Let $X$ be a Banach sequence space.
		Further, let $\mathfrak{T}$ be a linear Hausdorff topology on $X$ coarser than the norm topology on $X$.
		We will say that the space $X$ is {\bf order continuous}\footnote{Pedantically speaking, $\sigma$-order continuous with respect to $\mathfrak{T}$.}
		with respect to $\mathfrak{T}$ (briefly, the space $X$ is ${\bf OC}(\mathfrak{T})$) if whenever $\{x_n\}_{n=1}^{\infty}$
		is a sequence of positive functions from $X$ that is order bounded by $x \in X_{+}$
		and converges to zero with respect to $\mathfrak{T}$, it follows that $\{x_n\}_{n=1}^{\infty}$ is a null sequence with respect to the norm topology on $X$.}
	\end{definition}
	
	\begin{remark}[About Definition~\ref{DEF : OC Tau}]
		Let $X$ be a Banach sequence space.
		Obviously, if $\mathfrak{T}$ is the topology of point-wise convergence, then the property ${\bf OC}(\mathfrak{T})$ for $X$
		is nothing else but the well-known order continuity property of $X$.
		This property is usually abbreviated as ${\bf OC}$, hence our notation.
		Moreover, due to Remark~\ref{REMARK : minimal topologies}, if $X$ has the property ${\bf OC}$ then it also has ${\bf OC}(\mathfrak{T})$.
		The reverse implication holds for certain topologies (like, for example, the weak topology and pre-Lebesgue solid topologies;
		see Example~\ref{EXAMPLE : OC(tau) => OC} and Lemma~\ref{LEMMA : H(tau) => OC}, respectively), but in general might fail badly.
		For instance, fix $1 \leqslant p,q < \infty$, and consider a Banach sequence space $\mathcal{Z}(p,q)$ defined via the norm
		\begin{equation*}
			\norm{ \sum_{n=1}^{\infty} x(n) \boldsymbol{e}_n }_{\mathcal{Z}(p,q)}
				\coloneqq \left[ \left( \sum_{n=1}^{\infty} \abs{x(2n)}^p \right)^{q/p} + \sup_{n \in \mathbb{N}} \abs{x(2n-1)}^q \right]^{1/q}.
		\end{equation*}
		Plainly, $\mathcal{Z}(p,q)$ is not ${\bf OC}$.
		However, the space $\mathcal{Z}(p,q)$ has the property ${\bf H}(\textit{uniform})$ (note that the uniform topology is a linear Hausdorff topology on $\mathcal{Z}(p,q)$
		that is coarser than the norm topology).
		To see this, just note that $\ell_p$ has the property  ${\bf H}(\textit{point-wise})$ (so, also ${\bf H}(\textit{uniform})$; see Table~\ref{TABLE : H})
		and $\ell_{\infty}$ has the property ${\bf H}(\textit{uniform})$ (which is a trivial observation).
		Thus, since $\mathcal{Z}(p,q)$ is isometrically isomorphic to $\ell_p \oplus_{q} \ell_{\infty}$ and the property ${\bf H}(\textit{uniform})$
		can be lifted from $X$ to $X \oplus_{q} \ell_{\infty}$, so $\mathcal{Z}(p,q)$ has the property ${\bf H}(\textit{uniform})$ as well.
		\demo
	\end{remark}

     \begin{definition}[Pre-Lebesgue topology] \label{pre-Lebesgue}
     	{\it Let $X$ be a Banach sequence space equipped with a linear Hausdorff topology $\mathfrak{T}$ coarser than the norm topology on $X$.
     	We say that $\mathfrak{T}$ is the {\bf pre-Lebesgue topology}\footnote{Typically, this property applies to locally solid topologies
     	(see, for example, \cite[Definition~8.1, p.~52]{AB03} and \cite[Definition~2.1]{Con19}). However, since one can find examples of topologies which,
		even though are not locally solid, still behave like pre-Lebesgue topologies, we will stick to the same terminology
		(see Example~\ref{EXAMPLE : OC(tau) => OC}; {\it cf}. \cite[Proposition 2.6]{DDSS04} and \cite[Proposition 2.1]{DHLMS03}).}
		provided any disjoint, positive and order bounded sequence from $X$ converges to zero with respect to topology $\mathfrak{T}$.}
     \end{definition}

	In view of Remark~\ref{DEF : OC Tau} and with the aid of Definition~\ref{DEF : OC Tau} it is not difficult to show the following
	
	\begin{lemma} \label{LEMMA : H(tau) => OC}
		{\it Let $X$ be a Banach sequence space equipped with a linear Hausdorff topology $\mathfrak{T}$ coarser than the norm topology on $X$.
		Suppose that $X$ has the property ${\bf H}(\mathfrak{T})$.
		Then $X$ is ${\bf OC}(\mathfrak{T})$.
		Furthermore, if the topology $\mathfrak{T}$ is pre-Lebesgue, then $X$ is ${\bf OC}$.}
	\end{lemma}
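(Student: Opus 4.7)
The plan is to reduce everything to the equivalent formulation of $\mathbf{H}(\mathfrak{T})$ stated in the remark after Definition~\ref{DEF: Kadets--Klee property}: namely, that $\mathfrak{T}$-convergence together with convergence of norms upgrades to norm convergence.

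For the first assertion, I would take $0 \leqslant x_n \leqslant x$ with $x \in X_+$ and $x_n \xrightarrow{\mathfrak{T}} 0$, and pass to the complementary sequence $y_n \coloneqq x - x_n$. Three short observations do all the work: (a) by linearity of $\mathfrak{T}$, $y_n \xrightarrow{\mathfrak{T}} x$; (b) since $0 \leqslant y_n \leqslant x$, the ideal property of $X$ gives $\norm{y_n} \leqslant \norm{x}$; and (c) Lemma~\ref{LEMMA: H tau => lower semicontinuous} (sequential lower semi-continuity of the norm under $\mathfrak{T}$) yields $\norm{x} \leqslant \liminf_{n \to \infty} \norm{y_n}$. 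Sandwiching (b) against (c) forces $\norm{y_n} \to \norm{x}$, so the equivalent reformulation of $\mathbf{H}(\mathfrak{T})$ applies to the pair $(y_n, x)$ and produces $\norm{y_n - x} \to 0$, i.e., $\norm{x_n} \to 0$. Hence $X$ is $\mathbf{OC}(\mathfrak{T})$. The case $x = 0$ is trivial ($x_n \equiv 0$), so nothing is lost by assuming $x \neq 0$ when invoking the reformulation.

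For the second assertion, suppose in addition that $\mathfrak{T}$ is pre-Lebesgue. To verify classical order continuity, I would take a sequence $\{x_n\}$ of positive, pairwise disjoint functions in $X$ that is order bounded by some $\abs{x} \in X_+$ and converges pointwise to zero. Definition~\ref{pre-Lebesgue} applied directly to this disjoint, positive, order-bounded sequence gives $x_n \xrightarrow{\mathfrak{T}} 0$; combining this with the first part of the lemma yields $\norm{x_n} \to 0$, which is exactly order continuity of $X$.

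The only genuinely substantive ingredient is Lemma~\ref{LEMMA: H tau => lower semicontinuous}, and the only mildly delicate bookkeeping is the justification that the unit-sphere formulation of $\mathbf{H}(\mathfrak{T})$ is equivalent to the norms-converge-to-norm formulation I use (this is the content of the remark following Definition~\ref{DEF: Kadets--Klee property}, and it depends only on linearity of $\mathfrak{T}$ together with continuity of scalar multiplication). No iterated subsequence extraction or contradiction argument is needed: the complementary-sequence trick $y_n = x - x_n$ converts the hypothesis $x_n \xrightarrow{\mathfrak{T}} 0$ into a situation where $\mathbf{H}(\mathfrak{T})$ can be applied straight away, and pre-Lebesgue-ness bridges pointwise vanishing to $\mathfrak{T}$-vanishing. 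I do not anticipate any real obstacle.
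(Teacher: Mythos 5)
Your proof of the first assertion is exactly the paper's argument: pass to $y_n = x - x_n$, squeeze $\norm{y_n} \to \norm{x}$ between the ideal property and the lower semi-continuity from Lemma~\ref{LEMMA: H tau => lower semicontinuous}, then invoke ${\bf H}(\mathfrak{T})$. For the second assertion the paper merely cites Remark~\ref{REMARK : minimal topologies} together with an external result of Kolwicz, whereas you give the short direct argument (pre-Lebesgue turns a disjoint positive order-bounded sequence into a $\mathfrak{T}$-null one, and the first part finishes); this is correct and arguably cleaner, but it is the same underlying mechanism.
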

	\begin{proof}
    	Let $\{x_n\}_{n=1}^{\infty}$ be a sequence of positive functions from $X$ that is order bounded by $x \in X_{+}$
    	and converges to zero with respect to $\mathfrak{T}$.
		Then $0 \leqslant x - x_n \leqslant x$ for $n \in \mathbb{N}$ and, since the topology $\mathfrak{T}$ is linear,
		the sequence $\{ x - x_n \}_{n=1}^{\infty}$ tends to $x$ with respect to $\mathfrak{T}$.
		Using Lemma~\ref{LEMMA: H tau => lower semicontinuous},
		\begin{equation*}
			\norm{x}
				\leqslant \liminf_{n \rightarrow \infty} \norm{x - x_n}
				\leqslant \limsup_{n \rightarrow \infty} \norm{x - x_n}
				\leqslant \norm{x}.
		\end{equation*}
		In consequence, $\norm{x - x_n} \rightarrow \norm{x}$.
		However, since the space $X$ has the property ${\bf H}(\mathfrak{T})$ and $\norm{x_n} = \norm{x - (x - x_n)}$, so $\norm{x_n} \rightarrow 0$.
		To sum up, all this means that the space $X$ is ${\bf OC}(\mathfrak{T})$.
		Plainly, if the topology $\mathfrak{T}$ is pre-Lebesgue, then, applying Remark \ref{REMARK : minimal topologies} and Theorem 2.1 in \cite{Kol18}, we conclude that the property ${\bf OC}(\mathfrak{T})$ coincide with the usual ${\bf OC}$.
	\end{proof}
   
    Let us now discuss briefly the relations between Kadets--Klee property and some geometric properties.
    Recall that every separable Banach space has an equivalent {\bf LUR}-norm (see \cite[p.~220]{Meg98}).
    Moreover, it is well-known that if a Banach space $X$ is ${\bf LUR}$, then $X$ has the property $\mathbf{H}(\textit{weak})$
    (see, for example, \cite[Proposition~5.3.7, p.~463]{Meg98}).
    However, much more is true.
 
	\begin{lemma}(See \cite[Proposition 2.3]{DDSS04})
		{\it Let $X$ be a Banach space equipped with a linear Hausdorff topology $\mathfrak{T}$ coarser than the norm topology.
		Suppose that the space $X$ is ${\bf LUR}$ and the norm function $x \mapsto \norm{x}$ is sequentially lower semi-continuous
		with respect to $\mathfrak{T}$.
		Then the space $X$ has the property $\mathbf{H}(\mathfrak{T})$.}
	\end{lemma}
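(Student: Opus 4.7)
The plan is to use the equivalent formulation of $\mathbf{H}(\mathfrak{T})$ recorded after Definition~\ref{DEF: Kadets--Klee property}: it suffices to show that whenever $\{x_n\}_{n=1}^{\infty} \subset X$ satisfies $x_n \overset{\mathfrak{T}}{\rightarrow} x$ and $\|x_n\| \to \|x\|$, we have $\|x_n - x\| \to 0$. The strategy is the standard trick that in an ${\bf LUR}$ space the only way for the norms of $x_n$, $x$ and their average to \emph{all} tend to the same value is that $x_n \to x$ in norm; the role of sequential lower semi-continuity is exactly to produce the required lower estimate on $\|x_n + x\|$ from the $\mathfrak{T}$-convergence alone (without any weak-type duality argument).

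First I would dispose of the trivial case $x = 0$: then $\|x_n\| \to 0$ gives $x_n \to 0$ in norm directly. So assume $x \neq 0$. Since $\|x_n\| \to \|x\| > 0$, for all sufficiently large $n$ we may set $y_n \coloneqq x_n / \|x_n\|$ and $y \coloneqq x / \|x\|$, so that $y_n, y \in \text{Ball}(X)$ with $\|y_n\| = \|y\| = 1$. Because $\mathfrak{T}$ is a linear topology and the scalars $1/\|x_n\|$ converge to $1/\|x\|$, a routine check shows that $y_n \overset{\mathfrak{T}}{\rightarrow} y$; once we prove $\|y_n - y\| \to 0$, the convergence $\|x_n - x\| \to 0$ follows by combining it with $\|x_n\| \to \|x\|$ and the triangle inequality.

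Next I would apply $\mathfrak{T}$-linearity once more to conclude $y_n + y \overset{\mathfrak{T}}{\rightarrow} 2y$. By hypothesis the norm is sequentially lower semi-continuous with respect to $\mathfrak{T}$, so
\begin{equation*}
  2 = \|2y\| \leqslant \liminf_{n \to \infty} \|y_n + y\|.
\end{equation*}
On the other hand, the triangle inequality gives $\|y_n + y\| \leqslant \|y_n\| + \|y\| = 2$ for every $n$, and therefore $\|y_n + y\| \to 2$. At this point the ${\bf LUR}$ hypothesis applied to the sequence $\{y_n\}$ and the point $y$ (both in $\text{Ball}(X)$) forces $\|y_n - y\| \to 0$, which closes the argument.

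The only genuinely delicate point, and the one I would flag as the main obstacle, is the passage from $x_n \overset{\mathfrak{T}}{\rightarrow} x$ with $\|x_n\| \to \|x\|$ to the lower bound $\liminf_n \|y_n + y\| \geqslant 2$. This is where sequential lower semi-continuity of the norm plays its essential role; without it the topology $\mathfrak{T}$ could, in principle, be too coarse to transmit norm information along the sequence $y_n + y$. All the remaining steps are either algebraic manipulations with limits or direct quotations of ${\bf LUR}$, so I do not anticipate any additional difficulty.
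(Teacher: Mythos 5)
The paper does not actually prove this lemma; it defers entirely to the cited reference \cite[Proposition~2.3]{DDSS04}, so there is no internal argument to compare against. Your proof is correct and is the standard one: the reduction to the unit sphere is legitimate because scalar multiplication is jointly (hence sequentially) continuous for a linear topology, so $y_n = x_n/\norm{x_n} \overset{\mathfrak{T}}{\rightarrow} y = x/\norm{x}$; the sequential lower semi-continuity then yields $\norm{y_n+y}\rightarrow 2$, and the sequential form of ${\bf LUR}$ stated in Section~\ref{SUBSECTION : GP} finishes the job. You have correctly identified the lower semi-continuity step as the point where the hypothesis is genuinely used, and the final passage from $\norm{y_n-y}\rightarrow 0$ back to $\norm{x_n-x}\rightarrow 0$ is the routine estimate $\norm{x_n-x}\leqslant \norm{x_n}\,\norm{y_n-y}+\abs{\norm{x_n}-\norm{x}}$. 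No gaps.
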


	We will need one more definition.

    \begin{definition}[Abstract Schur property] \label{DEF : abstract Schur}
       {\it We will say that a Banach space $X$ has the {\bf Schur property} with respect to a linear Hausdorff topology $\mathfrak{T}$
       on $X$ coarser than the norm topology if whenever $\{x_n\}_{n=1}^{\infty}$ is a sequence from $X$ converging to $x \in X$
       with respect to $\mathfrak{T}$, then $x_{n}$ converges to $x$ in the norm topology on $X$.}
    \end{definition}
  
  	Of course, the Schur property with respect to the weak topology is what one usually has in mind when referring to the Schur property.
  	For other topologies this property rarely, if ever, holds (for example, it is easy to see that no Banach sequence space has the Schur
  	property with respect to the point-wise topology).
	
	The proof of the following result does not present any difficulty.
	
	\begin{lemma}
		{\it Let $X$ be a Banach space equipped with a linear Hausdorff topology $\mathfrak{T}$ coarser than the norm topology.
		Suppose that the space $X$ has the Schur property with respect to $\mathfrak{T}$.
		Then the space $X$ has the property $\mathbf{H}(\mathfrak{T})$.}
	\end{lemma}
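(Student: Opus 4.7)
The plan is to argue directly from the definitions, as the implication is essentially immediate once the terminology is unpacked. I would use the equivalent reformulation of the property $\mathbf{H}(\mathfrak{T})$ recorded in the remark just after Definition~\ref{DEF: Kadets--Klee property}: it suffices to verify that whenever $\{x_n\}_{n=1}^{\infty} \subset X$ satisfies $x_n \overset{\mathfrak{T}}{\rightarrow} x$ for some $x \in X$ together with $\lVert x_n \rVert \to \lVert x \rVert$, the sequence $\{x_n\}_{n=1}^{\infty}$ converges to $x$ in the norm topology.

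So I would fix such a sequence and simply apply the hypothesis: by the Schur property of $X$ with respect to $\mathfrak{T}$ (Definition~\ref{DEF : abstract Schur}), the sole assumption $x_n \overset{\mathfrak{T}}{\rightarrow} x$ already forces $\lVert x_n - x \rVert \to 0$, and the additional condition $\lVert x_n \rVert \to \lVert x \rVert$ plays no role. This gives the conclusion directly.

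There is essentially no obstacle here; the lemma records the tautological observation that the Schur property with respect to $\mathfrak{T}$ is strictly stronger than the property $\mathbf{H}(\mathfrak{T})$, since the former drops the hypothesis on convergence of norms entirely. The only thing to be careful about is invoking the correct equivalent formulation of $\mathbf{H}(\mathfrak{T})$, which is legitimate because $\mathfrak{T}$ is assumed linear. No further estimates, compactness or lower semicontinuity arguments are required.
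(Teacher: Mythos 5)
Your argument is correct and is exactly the intended one: the paper omits the proof entirely, remarking only that it "does not present any difficulty," and the immediate deduction you give — that $\mathfrak{T}$-convergence alone already forces norm convergence under the Schur hypothesis, so the extra assumption $\lVert x_n\rVert \to \lVert x\rVert$ in the reformulation of $\mathbf{H}(\mathfrak{T})$ is superfluous — is the whole content. The only cosmetic point is that your parenthetical claim that the Schur property is \emph{strictly} stronger than $\mathbf{H}(\mathfrak{T})$ is not established by (and is not needed for) the argument.
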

	
	\subsection{Abstract framework} \label{SUBSECTION : abstract framework}
	
	It is a fairly standard observation that geometric properties are usually inherited by closed subspaces.
	In order to mimic this situation in our context, we need the following
	
	\begin{definition}[$\oplus$-compatible topology] \label{DEF: admissible topology}
		{\it Let $\{ X_{\gamma} \}_{\gamma \in \Gamma}$
		be a countably family of Banach spaces equipped with a linear Hausdorff topologies $\mathfrak{T}_{\gamma}$
		coarser than the corresponding norm topologies on $X_{\gamma}$'s.
		Moreover, let $\mathcal{E}$ be a Banach sequence space on $\Gamma$
        equipped with a linear Hausdorff topology
		$\mathfrak{T}_{\mathcal{E}}$ coarser than the norm topology on $\mathcal{E}$.
		We will say that the linear Hausdorff topology $\mathfrak{T}$ on $( \bigoplus_{\gamma \in \Gamma} X_{\gamma} )_{\mathcal{E}}$
		is {\bf $\oplus$-compatible} with the topologies of their components if}
		\begin{enumerate}
			\item[(C1)] {\it the topology $\mathfrak{T}$ is coarser than the norm topology on $( \bigoplus_{\gamma \in \Gamma} X_{\gamma} )_{\mathcal{E}}$;}
			\item[(C2)] {\it for each $\gamma \in \Gamma$ the projection $\pi_{\gamma} \colon ( \bigoplus_{\gamma \in \Gamma} X_{\gamma} )_{\mathcal{E}} \rightarrow X_{\gamma}$
				is $\mathfrak{T}$-to-$\mathfrak{T}_{\gamma}$ sequentially continuous;}
			\item[(C3)] {\it for each $\gamma \in \Gamma$ the embedding $j_{\gamma} \colon X_{\gamma} \rightarrow ( \bigoplus_{\gamma \in \Gamma} X_{\gamma} )_{\mathcal{E}}$
				is $\mathfrak{T}_{\gamma}$-to-$\mathfrak{T}$ sequentially continuous;}
			\item[(C4)] {\it the embedding $j_{\mathcal{E}} \colon \mathcal{E} \rightarrow ( \bigoplus_{\gamma \in \Gamma} X_{\gamma} )_{\mathcal{E}}$
			is $\mathfrak{T}_{\mathcal{E}}$-to-$\mathfrak{T}$ sequentially continuous
			(here, let us recall the discussion included in Notation~\ref{NOTATION : direct sums}).}
		\end{enumerate}
	\end{definition}

	There is a veritable zoo of examples of $\oplus$-compatible topologies, but we will reserve any constructions until Section~\ref{SECTION : Play Doh},
	where some specific topologies will receive due attention.
	Anticipating the facts a bit, the two most important representatives, that is, the weak topology and the topology of local convergence in measure,
	are indeed $\oplus$-compatible (see Theorem~\ref{THM : H weak} and Theorem~\ref{THEOREM : H(measure)}, respectively).
	
	Notwithstanding, to paraphrase A. W. Miller, with the above definition at hand, it is hard not to show the next result
	(or at least (1), because the presence of (2) will become clear a little later).
	
	\begin{theorem}[Inheritance result] \label{THM: H is hereditary}
		{\it Let $\{ X_{\gamma} \}_{\gamma \in \Gamma}$ be a family of Banach spaces equipped with a linear Hausdorff
		topologies $\mathfrak{T}_{\gamma}$ coarser than the corresponding norm topologies on $X_{\gamma}$'s.
		Further, let $ \mathcal{E} $ be a Banach sequence space on $\Gamma$ equipped with a linear Hausdorff topology
		$\mathfrak{T}_{\mathcal{E}}$ coarser than the norm topology on $\mathcal{E}$.
		Suppose that the space $ ( \bigoplus_{\gamma \in \Gamma} X_{\gamma} )_{\mathcal{E}} $
	    has the property ${\bf H(\mathfrak{T})}$, where $\mathfrak{T}$ is the $\oplus$-compatible topology.
		Then}
		\begin{itemize}
			\item[(1)] {\it all $X_{\gamma}$'s have the appropriate property ${\bf H(\mathfrak{T}_{\gamma})}$
			and the space $\mathcal{E}$ has the property ${\bf H(\mathfrak{T}_{\mathcal{E}})}$;}
			\item[(2)] {\it the set $\Gamma$ can be decomposed into two disjoint subsets, say $\Gamma_1$ and $\Gamma_2$,
			in such a way that all $X_{\gamma}$'s with $\gamma \in \Gamma_1$ have the Schur property with respect to $\mathfrak{T}_{\gamma}$
			and $\mathcal{E}$ is ${\bf SM(\gamma)}$ for $\gamma \in \Gamma_2$.}
		\end{itemize}
	\end{theorem}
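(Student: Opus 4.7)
The plan is to prove parts (1) and (2) by transferring information between the direct sum and its components via the four compatibility conditions (C1)--(C4). For (1), I will simply exploit the fact that $X_{\gamma}$ and $\mathcal{E}$ sit as isometric subspaces of $(\bigoplus_{\gamma\in\Gamma}X_\gamma)_{\mathcal{E}}$ via $j_\gamma$ and $j_{\mathcal{E}}$, and that the relevant topologies pass both ways. Concretely, fix $\gamma \in \Gamma$ and assume $\{x_n\}_{n=1}^{\infty}$ lies on the unit sphere of $X_\gamma$ with $x_n \to x$ in $\mathfrak{T}_\gamma$ and $\|x\|_\gamma = 1$. By (C3), $j_\gamma(x_n) \to j_\gamma(x)$ in $\mathfrak{T}$, and by construction $\|j_\gamma(x_n)\| = \|x_n\|_\gamma = 1 = \|j_\gamma(x)\|$, so the property $\mathbf{H}(\mathfrak{T})$ of the direct sum yields $\|j_\gamma(x_n) - j_\gamma(x)\| \to 0$, which is nothing else but $\|x_n - x\|_\gamma \to 0$. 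The argument for $\mathcal{E}$ is identical, using (C4) and the fact that $j_\mathcal{E}$ is an isometric embedding.

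For (2), I will set $\Gamma_1 := \{\gamma \in \Gamma : X_\gamma \text{ has the Schur property w.r.t. } \mathfrak{T}_\gamma\}$ and $\Gamma_2 := \Gamma \setminus \Gamma_1$; the content of (2) is then the claim that for every $\gamma_0 \in \Gamma_2$ the space $\mathcal{E}$ is $\mathbf{SM}(\gamma_0)$. I argue by contradiction. Assume $\gamma_0 \in \Gamma_2$ and that $\mathcal{E}$ is not $\mathbf{SM}(\gamma_0)$. By Lemma~\ref{LEMMMA : geometric lemma SM}, there are $a, b \in \mathcal{E}_+$ with $a(\gamma) = b(\gamma)$ for $\gamma \neq \gamma_0$, $a(\gamma_0) = 0 < b(\gamma_0)$ and $\|a\|_\mathcal{E} = \|b\|_\mathcal{E} = 1$. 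Since $\gamma_0 \in \Gamma_2$, the Schur property fails for $X_{\gamma_0}$ with respect to $\mathfrak{T}_{\gamma_0}$, so there exists a sequence $\{v_n\}_{n=1}^{\infty}$ in $X_{\gamma_0}$ with $v_n \to 0$ in $\mathfrak{T}_{\gamma_0}$ and, after passing to a subsequence, $\|v_n\|_{\gamma_0} \geqslant \delta$ for some $\delta > 0$. The scalars $c_n := b(\gamma_0)/\|v_n\|_{\gamma_0}$ lie in $(0, b(\gamma_0)/\delta]$, hence are bounded, and so by joint continuity of scalar multiplication in the topological vector space $(X_{\gamma_0}, \mathfrak{T}_{\gamma_0})$, the rescaled sequence $u_n := c_n v_n$ still satisfies $u_n \to 0$ in $\mathfrak{T}_{\gamma_0}$, while $\|u_n\|_{\gamma_0} = b(\gamma_0)$ for every $n$.

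Now I construct the witness. Fix once and for all norm-one vectors $\bar{x}_\gamma \in X_\gamma$ for $\gamma \neq \gamma_0$, and define
\begin{equation*}
w(\gamma) \coloneqq \begin{cases} a(\gamma)\,\bar{x}_\gamma & \gamma \neq \gamma_0,\\ 0 & \gamma = \gamma_0,\end{cases}
\qquad
w_n(\gamma) \coloneqq \begin{cases} a(\gamma)\,\bar{x}_\gamma & \gamma \neq \gamma_0,\\ u_n & \gamma = \gamma_0.\end{cases}
\end{equation*}
Then $\lfloor w \rceil = a$ and $\lfloor w_n \rceil = \sum_{\gamma \neq \gamma_0} a(\gamma)\boldsymbol{e}_\gamma + b(\gamma_0)\boldsymbol{e}_{\gamma_0} = b$, so $\|w\| = \|a\|_\mathcal{E} = 1 = \|b\|_\mathcal{E} = \|w_n\|$ for all $n$, placing everything on the unit sphere of the direct sum. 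Moreover $w_n - w = j_{\gamma_0}(u_n)$, and (C3) together with $u_n \to 0$ in $\mathfrak{T}_{\gamma_0}$ yields $w_n - w \to 0$ in $\mathfrak{T}$, so $w_n \to w$ in $\mathfrak{T}$. The property $\mathbf{H}(\mathfrak{T})$ for the direct sum now forces $\|w_n - w\| \to 0$, but $\|w_n - w\| = \|j_{\gamma_0}(u_n)\| = \|u_n\|_{\gamma_0} = b(\gamma_0) > 0$, a contradiction.

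The main technical point I expect to have to be careful about is the rescaling step, where I need joint continuity of scalar multiplication to pass from the raw Schur-violating sequence $\{v_n\}$ to a sequence $\{u_n\}$ of prescribed norm $b(\gamma_0)$ that still converges to $0$ in $\mathfrak{T}_{\gamma_0}$; the fact that $\mathfrak{T}_{\gamma_0}$ is a linear topology and the scalars $c_n$ are bounded above and below is exactly what makes this work. Apart from that, the argument is a clean matchup between the geometric lemma for $\mathbf{SM}(\gamma_0)$ and the compatibility condition (C3).
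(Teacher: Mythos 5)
Your proposal is correct and follows essentially the same route as the paper: part (1) by pushing unit-sphere sequences through the isometric embeddings $j_\gamma$ and $j_{\mathcal{E}}$ using (C3)/(C4), and part (2) by contradiction, combining Lemma~\ref{LEMMMA : geometric lemma SM} with a $\mathfrak{T}_{\gamma_0}$-null, norm-bounded-below sequence witnessing the failure of the Schur property to build a unit-sphere sequence in the direct sum that is $\mathfrak{T}$-convergent but not norm convergent. Your explicit justification of the rescaling step (bounded scalars times a null sequence in a linear topology) is a detail the paper glosses over, but it is the same argument.
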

	\begin{proof}
		For the sake of clarity, we will divide the argument into three parts.
		
		{\bf First part.} Suppose that $\mathcal{E}$ fails to have the property ${\bf H(\mathfrak{T}_{\mathcal{E}})}$.
		This means that we can find a sequence $\{x_n\}_{n=1}^{\infty}$ of elements from $\mathcal{E}$ and $x \in \mathcal{E}$ such that
		\begin{itemize}
			\item[$\bullet$] $x_{n}$ converges in $\mathfrak{T}_{\mathcal{E}}$ topology to $x$;
			\item[$\bullet$] $\norm{x_{n}}_{\mathcal{E}}$ converges to $\norm{x}_{\mathcal{E}}$;
			\item[$\bullet$] and $x_{n}$ does not converge to $x$ in the norm topology of $\mathcal{E}$.
		\end{itemize}
		Since the topology $\mathfrak{T}$ is assumed to be $\oplus$-compatible, so
		\begin{equation} \label{EQ: 1}
			\mathfrak{T}\text{-}\lim_{n \rightarrow \infty} j_{\mathcal{E}}(x_n) = j_{\mathcal{E}}(\mathfrak{T}_{\mathcal{E}}\text{-}\lim_{n \rightarrow \infty} x_n) = j_{\mathcal{E}}(x).
		\end{equation}
		Moreover,
		\begin{equation} \label{EQ: 2}
			\lim_{n \rightarrow \infty} \norm{j_{\mathcal{E}}(x_n)}
			= \lim_{n \rightarrow \infty} \norm{x_n}_{\mathcal{E}}
			= \lim_{n \rightarrow \infty} \norm{x}_{\mathcal{E}}
			= \lim_{n \rightarrow \infty} \norm{j_{\mathcal{E}}(x)}
		\end{equation}
		and
		\begin{equation} \label{EQ: 3}
			\lim_{n \rightarrow \infty} \norm{j_{\mathcal{E}}(x) - j_{\mathcal{E}}(x_n)}
				= \lim_{n \rightarrow \infty} \norm{x - x_n}_{\mathcal{E}} > 0.
		\end{equation}
		However, \eqref{EQ: 1} along with \eqref{EQ: 2} and \eqref{EQ: 3} immediately imply that
		$( \bigoplus_{\gamma \in \Gamma} X_{\gamma} )_{\mathcal{E}}$ does not have the property ${\bf H(\mathfrak{T})}$.
		
		{\bf Second part.} In a completely analogous way, it can be shown that if $X_{\gamma}$ does not have the property
		${\bf H(\mathfrak{T}_{\gamma})}$ for some $\gamma \in \Gamma$, then $( \bigoplus_{\gamma \in \Gamma} X_{\gamma} )_{\mathcal{E}}$
		does not have the property ${\bf H(\mathfrak{T})}$.
		
		{\bf Third part.}
		Suppose that there is $\gamma_0 \in \Gamma$ such that $X_{\gamma_0}$ fails to have the Schur property
		with respect to $\mathfrak{T}_{\gamma_0}$ topology and $\mathcal{E}$ is not ${\bf SM}(\gamma_0)$.
		This means that there is a sequence $\{ x_n^{(\gamma_0)} \}_{n=1}^{\infty}$ in $X_{\gamma_0}$
		such that $x_{n}^{(\gamma_0)}$ is a null sequence with respect to $\mathfrak{T}_{\gamma_0}$ topology,
		but $\vert \vert x_n(\gamma_0) \vert \vert_{\gamma_0} = 1$ for all $n \in \mathbb{N}$.
		Now, Lemma~\ref{LEMMMA : geometric lemma SM} guarantee the existence of a positive norm one sequence
		$\sum_{\gamma \in \Gamma} a(\gamma) \boldsymbol{e}_{\gamma} \in \mathcal{E}$ such that $a(\gamma_0) > 0$ and
		\begin{equation*}
			\norm{ \sum_{\gamma \in \Gamma} a(\gamma) \boldsymbol{e}_{\gamma} }_{\mathcal{E}}
				= \norm{ \sum_{\gamma \in \Gamma \setminus \{ \gamma_0 \}} a(\gamma) \boldsymbol{e}_{\gamma} }_{\mathcal{E}}.
		\end{equation*}
		Now, for each $\gamma \in \Gamma \setminus \{ \gamma_0 \}$, let us choose a norm one vector, say $x^{(\gamma)}$,
		in $X_{\gamma}$ and set
		\begin{equation*}
			y \coloneqq \sum_{\gamma \in \Gamma \setminus \{ \gamma_0 \}} a(\gamma) x^{(\gamma)} \otimes \boldsymbol{e}_{\gamma}
		\end{equation*}
		and
		\begin{equation*}
			\quad y_n \coloneqq y + a(\gamma_0) x^{(\gamma_0)}_n \otimes \boldsymbol{e}_{\gamma_0} \quad \text{ for } \quad n \in \mathbb{N}.
		\end{equation*}
		Plainly, $\norm{y} = 1$ and $\norm{y_n} = 1$ for all $n \in \mathbb{N}$.
		Moreover, since the topology $\mathfrak{T}$ is $\oplus$-compatible, so $y_n$ converges in $\mathfrak{T}$ topology to $y$.
		However,
		\begin{equation*}
			\norm{y - y_n} = a(\gamma_0) \norm{\boldsymbol{e}_{\gamma_0}}_{\mathcal{E}} > 0,
		\end{equation*}
		so $( \bigoplus_{\gamma \in \Gamma} X_{\gamma} )_{\mathcal{E}} $ fails to have the property ${\bf H}(\mathfrak{T})$.
	\end{proof}

	The next theorem is the main tool of our work.

	\begin{theorem}[Lifting result] \label{THM: main theorem}
		{\it Let $\{ X_{\gamma} \}_{\gamma \in \Gamma}$ be a countably family of Banach spaces equipped with a linear Hausdorff topologies $\mathfrak{T}_{\gamma}$
		coarser than the corresponding norm topologies on $X_{\gamma}$'s.
		Further, let $\mathcal{E}$ be a Banach sequence space on $\Gamma$ equipped with a linear Hausdorff topology $\mathfrak{T}_{\mathcal{E}}$
		coarser than the norm topology on $\mathcal{E}$.
		Suppose that}
		\begin{itemize}
            \item[(A1)] {\it the topology $\mathfrak{T_{\mathcal{E}}}$ is pre-Lebesgue (see Definition \ref{pre-Lebesgue});}
			\item[(A2)] {\it the space $( \bigoplus_{\gamma \in \Gamma} X_{\gamma} )_{\mathcal{E}}$ is equipped with the $\oplus$-compatible topology;}
			\item[(A3)] {\it the mapping
			$\lfloor \bullet \rceil \colon ( \bigoplus_{\gamma \in \Gamma} X_{\gamma} )_{\mathcal{E}} \rightarrow \mathcal{E}$
			is $\mathfrak{T}$-to-$\mathfrak{T}_{\mathcal{E}}$ sequentially continuous when restricted to the unit sphere.}
		\end{itemize}
		{\it Then the space $( \bigoplus_{\gamma \in \Gamma} X_{\gamma} )_{\mathcal{E}}$ has the property ${\bf H(\mathfrak{T})}$
		provided all $X_{\gamma}$'s have the appropriate property ${\bf H}(\mathfrak{T}_{\gamma})$
		and $\mathcal{E}$ has the property ${\bf H}(\mathfrak{T}_{\mathcal{E}})$.}
	\end{theorem}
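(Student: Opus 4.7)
Take a sequence $\{x_n\}_{n=1}^{\infty}$ on the unit sphere of $(\bigoplus_{\gamma \in \Gamma} X_{\gamma})_{\mathcal{E}}$ converging in $\mathfrak{T}$ to some $x$ with $\|x\|=1$; the goal is to show $\|x_n - x\| \to 0$. My plan is, first, to transfer the problem to $\mathcal{E}$ via the sublinear map $\lfloor \bullet \rceil$, then to descend to each coordinate $X_{\gamma}$ using the $\oplus$-compatibility, and finally to recombine using the order continuity of $\mathcal{E}$. Concretely, assumption (A3) gives $\lfloor x_n \rceil \to \lfloor x \rceil$ in $\mathfrak{T}_{\mathcal{E}}$, and because $\|\lfloor x_n \rceil\|_{\mathcal{E}} = \|x_n\| = 1 = \|x\| = \|\lfloor x \rceil\|_{\mathcal{E}}$, the fact that $\mathcal{E}$ has the property $\mathbf{H}(\mathfrak{T}_{\mathcal{E}})$ upgrades this immediately to norm convergence $\lfloor x_n \rceil \to \lfloor x \rceil$ in $\mathcal{E}$.

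Next, for each fixed $\gamma \in \Gamma$, I would invoke the compatibility condition (C2) to obtain $x_n(\gamma) \to x(\gamma)$ in $\mathfrak{T}_{\gamma}$, and combine it with the norm convergence just obtained (which yields $\|x_n(\gamma)\|_{\gamma} \to \|x(\gamma)\|_{\gamma}$ coordinate-wise, since coordinate evaluations are continuous on any Banach sequence space) to deduce $\|x_n(\gamma) - x(\gamma)\|_{\gamma} \to 0$. If $x(\gamma)=0$ this is immediate; otherwise, the norms $\|x_n(\gamma)\|_{\gamma}$ are eventually bounded away from zero and converge to $\|x(\gamma)\|_{\gamma}$, so after dividing each $x_n(\gamma)$ by its norm one lands on the unit sphere of $X_{\gamma}$, preserves $\mathfrak{T}_{\gamma}$-convergence by joint continuity of scalar multiplication in a linear Hausdorff topology, and concludes via $\mathbf{H}(\mathfrak{T}_{\gamma})$.

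It remains to lift this coordinate-wise norm convergence to convergence in the direct-sum norm. At this stage I would invoke Lemma~\ref{LEMMA : H(tau) => OC} together with assumption (A1): since $\mathfrak{T}_{\mathcal{E}}$ is pre-Lebesgue and $\mathcal{E}$ has $\mathbf{H}(\mathfrak{T}_{\mathcal{E}})$, the space $\mathcal{E}$ is order continuous. Given $\varepsilon > 0$, pick a finite $\Gamma_0 \subset \Gamma$ with $\|\lfloor x \rceil \cdot \mathbf{1}_{\Gamma \setminus \Gamma_0}\|_{\mathcal{E}} < \varepsilon$; for $n$ large the norm convergence $\lfloor x_n \rceil \to \lfloor x \rceil$ then yields $\|\lfloor x_n \rceil \cdot \mathbf{1}_{\Gamma \setminus \Gamma_0}\|_{\mathcal{E}} < 2\varepsilon$. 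The pointwise bound $\lfloor x_n - x \rceil \leqslant \lfloor x_n \rceil + \lfloor x \rceil$ combined with the ideal property of $\mathcal{E}$ then controls the tail $\|\lfloor x_n - x \rceil \cdot \mathbf{1}_{\Gamma \setminus \Gamma_0}\|_{\mathcal{E}}$, while the head $\|\lfloor x_n - x \rceil \cdot \mathbf{1}_{\Gamma_0}\|_{\mathcal{E}}$ is a finite combination of the coordinate convergences from the previous step and therefore tends to zero.

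The main obstacle I foresee lies in the second paragraph: $\mathbf{H}(\mathfrak{T}_{\gamma})$ is a sphere-only statement, yet what one has after step one is merely topological convergence in $\mathfrak{T}_{\gamma}$ together with convergence of norms at each coordinate, without any a priori reason for the vectors to live on a common sphere. The normalization device is the delicate point and depends essentially on the joint continuity of scalar multiplication in a linear topology and on a careful case split according to whether $x(\gamma)$ is zero. Once this obstacle is cleared, the head/tail estimate of paragraph three is a routine consequence of the order continuity of $\mathcal{E}$ and the norm convergence of $\lfloor x_n \rceil$ secured at the very start.
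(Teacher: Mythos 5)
Your proposal is correct and follows the same three-step strategy as the paper's proof: push the sequence into $\mathcal{E}$ via $\lfloor \bullet \rceil$ using (A3) and the property $\mathbf{H}(\mathfrak{T}_{\mathcal{E}})$, descend to each coordinate via (C2) and $\mathbf{H}(\mathfrak{T}_{\gamma})$, and recombine through a head/tail split justified by the order continuity of $\mathcal{E}$ (obtained exactly as you do, from Lemma~\ref{LEMMA : H(tau) => OC} and (A1)). The one genuine difference is in the tail estimate: the paper invokes \cite[Lemma~2, p.~97]{KA82} to pass to a subsequence satisfying $\abs{\lfloor x \rceil - \lfloor x_{n_k} \rceil} \leqslant \varepsilon_k y$ and then dominates the tail by $(2\lfloor x \rceil + \varepsilon_1 y){\bf 1}_{\Gamma_{n_0}}$, whereas you control $\lVert \lfloor x_n \rceil {\bf 1}_{\Gamma \setminus \Gamma_0} \rVert_{\mathcal{E}}$ directly by the triangle inequality and the ideal property from the norm convergence $\lfloor x_n \rceil \to \lfloor x \rceil$ already in hand; your version avoids both the external lemma and the subsequence extraction (and hence the implicit sub-subsequence argument the paper would need to return to the full sequence), so it is arguably the cleaner route. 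Also note that the normalization issue you flag as the main obstacle is already dispatched in the paper by the remark giving the equivalent formulation of $\mathbf{H}(\mathfrak{T})$ for sequences with convergent norms, so your careful case split, while correct, is reproving a stated equivalence.
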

	\begin{proof}
		Take the element $x$ and the sequence $\{x_n\}_{n=1}^{\infty}$ from the unit sphere of $( \bigoplus_{\gamma \in \Gamma} X_{\gamma} )_{\mathcal{E}}$
		such that $x_{n}$ converges in $\mathfrak{T}$ topology to $x$.
		Our goal is to show that
		\begin{equation} \tag{$\spadesuit$} \label{EQ: goal main thm}
			x_{n} \text{ converges to } x \text{ in the norm topology of } (\bigoplus_{\gamma \in \Gamma} X_{\gamma} )_{\mathcal{E}}.
		\end{equation}
		To see this, note that since the mapping
		$\lfloor \bullet \rceil \colon ( \bigoplus_{\gamma \in \Gamma} X_{\gamma} )_{\mathcal{E}} \rightarrow \mathcal{E}$
		is assumed to be $\mathfrak{T}$-to-$\mathfrak{T}_{\mathcal{E}}$ sequentially continuous when restricted to the unit
		sphere and all involved topologies are linear, so
		\begin{equation}
			\lfloor x_{n} \rceil \text{ converges to } \lfloor x \rceil \text{ in the topology } \mathfrak{T}_{\mathcal{E}}.
		\end{equation}
		In consequence, remembering that the space $\mathcal{E}$ has the property ${\bf H}(\mathfrak{T_{\mathcal{E}}})$
		and $\norm{ \lfloor x_{n} \rceil }_{\mathcal{E}} = \norm{ \lfloor x \rceil }_{\mathcal{E}}=1$,
		we infer that
		\begin{equation}
			\lfloor x_{n} \rceil \text{ converges to } \lfloor x \rceil \text{ in the norm topology of } \mathcal{E}.
		\end{equation}
		This means, due to \cite[Lemma~2, p.~97]{KA82}, that there exists a subsequence $\left\{ \lfloor x_{n_k} \rceil \right\}_{k=1}^{\infty}$,
		an element $y \in \mathcal{E}$ and a decreasing null sequence $\{ \varepsilon_k \}_{k=1}^{\infty}$ of positive reals such that
		\begin{equation} \label{EQ : zerowe szacowanie}
			\abs{\lfloor x \rceil - \lfloor x_{n_k} \rceil} \leqslant \varepsilon_k y \quad \text{ for each } \quad k \in \mathbb{N}.
		\end{equation}
		(Using the language of the Riesz spaces, \eqref{EQ : zerowe szacowanie} means that the sequence $\lfloor x_{n} \rceil$ is order convergent to $\lfloor x \rceil$;
		{\it cf}. \cite[Definition~1.1]{Con19}.)
		Denote still this subsequence by $\left\{ \lfloor x_{n} \rceil \right\}_{n=1}^{\infty}$. Fix $\varepsilon > 0$.
		Take a decreasing family $\{\Gamma_n\}_{n=1}^{\infty}$ of subsets of $\Gamma$ with empty intersection such that
		$\#(\Gamma \setminus \Gamma_n) < \infty$ for all $n \in \mathbb{N}$. Since $\mathcal{E}$ has the property $H(\mathfrak{T}_{\mathcal{E}})$,
		by Lemma \ref{LEMMA : H(tau) => OC} and the assumption (A1), the space $\mathcal{E}$ is separable (or, which is one thing in this setting,
		$\mathcal{E}$ is ${\bf OC}$).
		In consequence, there is $n_0 = n_0(\varepsilon) \in \mathbb{N}$ such that
		\begin{equation} \label{EQ : pierwsze szacowanie}
			\norm{(2 \lfloor x \rceil + \varepsilon_1 y) {\bf 1}_{\Gamma_{n_0}}}_{\mathcal{E}} < \frac{\varepsilon}{2}.
		\end{equation}
		Next, due to (A2), the $\mathfrak{T}$ topology is $\oplus$-compatible,
		so $x_n(\gamma)$ converge in $\mathfrak{T}_{\gamma}$ topology to $x(\gamma)$ for each $\gamma \in \Gamma$.
		Furthermore, by (\ref{EQ : zerowe szacowanie}), we conclude that $ \lfloor x_{n} \rceil \rightarrow \lfloor x \rceil $ pointwisely.
		Thus $$\norm{ x(\gamma) - x_n(\gamma) }_{\gamma} \rightarrow 0$$ for each $\gamma \in \Gamma$, because all $X_{\gamma}$'s have the property ${\bf H}(\mathfrak{T}_{\gamma})$.
		Therefore, since $\#(\Gamma \setminus \Gamma_{n_0}) < \infty$, so there is $N = N(\varepsilon) \in \mathbb{N}$ such that
		\begin{equation} \label{EQ : 2 szacowanie}
			\norm{ \sum_{\gamma \in \Gamma \setminus \Gamma_{n_0}} \norm{ x(\gamma) - x_n(\gamma) }_{\gamma} \boldsymbol{e}_{\gamma} }_{\mathcal{E}} < \frac{\varepsilon}{2}
				\quad \text{ for } \quad n \geqslant N.
		\end{equation}
		Thus, for $n \geqslant N$, we have
		\begin{align*}
			\norm{x - x_n}
				& = \norm{ \sum_{\gamma \in \Gamma} \norm{ x(\gamma) - x_n(\gamma) }_{\gamma} \boldsymbol{e}_{\gamma} }_{\mathcal{E}} \\
				& \leqslant \norm{ \sum_{\gamma \in \Gamma \setminus \Gamma_{n_0}} \norm{ x(\gamma) - x_n(\gamma) }_{\gamma} \boldsymbol{e}_{\gamma} }_{\mathcal{E}}
					+ \norm{ \sum_{\gamma \in \Gamma_{n_0}} \norm{ x(\gamma) - x_n(\gamma) }_{\gamma} \boldsymbol{e}_{\gamma} }_{\mathcal{E}} \\
				& < \frac{\varepsilon}{2}
					+ \norm{ \sum_{\gamma \in \Gamma_{n_0}} \abs{ \lfloor x \rceil(\gamma) + \lfloor x_n \rceil(\gamma) } \boldsymbol{e}_{\gamma}}_{\mathcal{E}}
						\quad (\text{by \eqref{EQ : 2 szacowanie} and the $\triangle$-inequality}) \\
				& \leqslant \frac{\varepsilon}{2} + \norm{(2 \lfloor x \rceil + \varepsilon_1 y) {\bf 1}_{\Gamma_{n_0}}}_{\mathcal{E}} \quad (\text{using \eqref{EQ : zerowe szacowanie}}) \\
				& < \frac{\varepsilon}{2} + \frac{\varepsilon}{2} = \varepsilon \quad (\text{using \eqref{EQ : pierwsze szacowanie}}).
		\end{align*}
		This, due to $\varepsilon$'s arbitrariness, proves \eqref{EQ: goal main thm}.
		The proof is complete.
	\end{proof}

	Perhaps only assumptions (A1) and (A3) from Theorem~\ref{THM: main theorem} requires some illustration.
	Let us focus on the former first.
	As we have already mentioned, any locally solid topology that is pre-Lebesgue in the sense of \cite[Definition~8.1, p.~52]{AB03}
	immediately meets condition (A1).
	Thus, for example, the topology of point-wise convergence falls into this pattern.

	\begin{example}[The weak topology satisfies (A1) from Theorem~\ref{THM: main theorem}] \label{EXAMPLE : OC(tau) => OC}
		Take a sequence $\{ x_n \}_{n=1}^{\infty}$ of positive and pairwise disjoint functions from $X$ that is order bounded by $x \in X$.
		Then, for any positive functional $\varphi \in X^*$,
		\begin{equation*}
			\sum_{n=1}^{N} \langle x_n, \varphi \rangle
			= \langle \sum_{n=1}^{N} x_n, \varphi \rangle
			\leqslant \langle x, \varphi \rangle.
		\end{equation*}
		Consequently, $\sum_{n=1}^{\infty} \langle x_n, \varphi \rangle < \infty$ and $\langle x_n, \varphi \rangle \rightarrow 0$.
		However, since any $x^* \in X^* \setminus \{ 0 \}$ is a difference of two positive functionals,
		so it follows that $\langle x_n, x^* \rangle \rightarrow 0$.
		In other words, $\{x_n\}_{n=1}^{\infty}$ is a weakly null sequence.
		That's all.
		\demo
	\end{example}
	
	Let us move on to the condition (A3) from Theorem~\ref{THM: main theorem}.
	Formally, Theorem~\ref{THM: main theorem} completes Theorem~\ref{THM: H is hereditary}.
	However, from a practical point of view, checking the assumption (A3), that is, whether
	the mapping
	$\lfloor \bullet \rceil \colon ( \bigoplus_{\gamma \in \Gamma} X_{\gamma} )_{\mathcal{E}} \rightarrow \mathcal{E}$
	is actually $\mathfrak{T}$-to-$\mathfrak{T}_{\mathcal{E}}$ sequentially continuous when restricted to the unit sphere can be quite tedious.
	In this respect, the conditions (1) and (2) from Theorem~\ref{THM: H is hereditary}, which split into the Schur property
	for $X_{\gamma}$'s and strict monotonicity for $\mathcal{E}$, seems to be much more useful.
	Worse still, at first glance the conditions (1) and (2) from Theorem~\ref{THM: H is hereditary} and the condition (A3) from Theorem~\ref{THM: main theorem}
	do not seem to have much in common.
	But, under some mild assumptions, they have.

 \begin{definition}
   Let $ \mathcal{E} $ be a Banach sequence space.
   Following \cite{KA82}, we say that the unit ball $\text{Ball}(\mathcal{E})$ {\bf is sequentially closed with respect to the point-wise topology}
   provided for each $x \in \omega$ and any sequence $\{ x_n \}_{n=1}^{\infty}$ in $\text{Ball}(\mathcal{E})$ converging point-wisely to $x$
   it follows that $x \in \text{Ball}(\mathcal{E})$.
 \end{definition}

   Note that the unit ball $\text{Ball}(\mathcal{E})$ is sequentially closed with respect to the point-wise topology if, and only if, the space $\mathcal{E}$
   has the Fatou property (see Remark \ref{REMARK : o warunkach w twierdzeniu}(A1), {\it cf}. \cite[Lemma~1.5, p.~4]{BS88}). 

	\begin{theorem}[Compatibility result] \label{THM : oba warunki sa rownowazne}
		{\it Let $\{ X_{\gamma} \}_{\gamma \in \Gamma}$ be a family of Banach spaces equipped with a linear Hausdorff topologies $\mathfrak{T}_{\gamma}$
		coarser than the corresponding norm topologies on $X_{\gamma}$'s.
		Further, let $ \mathcal{E} $ be a Banach sequence space equipped with a linear Hausdorff topology $\mathfrak{T}_{\mathcal{E}}$
		coarser than the norm topology on $\mathcal{E}$.
		Suppose that}
		\begin{itemize}
			\item[(A1)] {\it the unit ball } $\text{Ball}(\mathcal{E})$ {\it is sequentially closed with respect to the point-wise topology;}
			\item[(A2)] {\it the space $ ( \bigoplus_{\gamma \in \Gamma} X_{\gamma} )_{\mathcal{E}} $ is equipped with the
			$\oplus$-compatible topology $\mathfrak{T}$;}
			\item[(A3)] {\it for positive sequences on the unit sphere in $\mathcal{E}$ the topology $\mathfrak{T}_{\mathcal{E}}$
			agree with the topology of point-wise convergence;}
			\item[(A4)] {\it for every $\gamma \in \Gamma$ the norm function $x \mapsto \norm{x}_{\gamma}$
			is sequentially lower semi-continuous with respect to $\mathfrak{T}_{\gamma}$;}
			
		\end{itemize}
		{\it Then the mapping
		$\lfloor \bullet \rceil \colon ( \bigoplus_{\gamma \in \Gamma} X_{\gamma} )_{\mathcal{E}} \rightarrow \mathcal{E}$
		is $\mathfrak{T}$-to-$\mathfrak{T}_{\mathcal{E}}$ sequentially continuous when restricted to the unit sphere if,
		and only if, the set $\Gamma$ can be decomposed into two disjoint subsets, say $\Gamma_1$ and $\Gamma_2$,
		in such a way that all $X_{\gamma}$'s with $\gamma \in \Gamma_1$ have the Schur property with respect to $\mathfrak{T}_{\gamma}$
		and $\mathcal{E}$ is ${\bf SM(\gamma)}$ for $\gamma \in \Gamma_2$.}
	\end{theorem}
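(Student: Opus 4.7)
The plan is to establish both implications; the forward direction is a direct extension of the third part of the proof of Theorem~\ref{THM: H is hereditary}, while the reverse direction is the substantive content and requires the four technical assumptions orchestrated in concert.

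For the forward direction I would argue by contraposition. Suppose no such decomposition exists; then there is some $\gamma_0 \in \Gamma$ for which $X_{\gamma_0}$ fails the Schur property with respect to $\mathfrak{T}_{\gamma_0}$ \emph{and} $\mathcal{E}$ is not ${\bf SM}(\gamma_0)$. The failure of Schur yields a sequence $\{x_n^{(\gamma_0)}\}_{n=1}^{\infty}$ in $X_{\gamma_0}$ that is $\mathfrak{T}_{\gamma_0}$-null with $\|x_n^{(\gamma_0)}\|_{\gamma_0} = 1$. The failure of ${\bf SM}(\gamma_0)$, via Lemma~\ref{LEMMMA : geometric lemma SM}, furnishes a positive norm-one element $\sum_{\gamma \in \Gamma} a(\gamma) \boldsymbol{e}_{\gamma} \in \mathcal{E}$ with $a(\gamma_0) > 0$ and $\|\sum_{\gamma \neq \gamma_0} a(\gamma) \boldsymbol{e}_{\gamma}\|_{\mathcal{E}} = 1$. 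Fixing norm-one vectors $x^{(\gamma)} \in X_{\gamma}$ for $\gamma \neq \gamma_0$, I set $y \coloneqq \sum_{\gamma \neq \gamma_0} a(\gamma) x^{(\gamma)} \otimes \boldsymbol{e}_{\gamma}$ and $y_n \coloneqq y + a(\gamma_0) x_n^{(\gamma_0)} \otimes \boldsymbol{e}_{\gamma_0}$, so that $y, y_n$ lie on the unit sphere and $y_n \to y$ in $\mathfrak{T}$ by $\oplus$-compatibility (C3). However, because $\|x_n^{(\gamma_0)}\|_{\gamma_0} = 1$ for every $n$, the sequence $\lfloor y_n \rceil = \sum_{\gamma} a(\gamma) \boldsymbol{e}_{\gamma}$ is \emph{constant} in $n$ and differs from $\lfloor y \rceil = \sum_{\gamma \neq \gamma_0} a(\gamma) \boldsymbol{e}_{\gamma}$; since $\mathfrak{T}_{\mathcal{E}}$ is Hausdorff this rules out $\lfloor y_n \rceil \to \lfloor y \rceil$ in $\mathfrak{T}_{\mathcal{E}}$, contradicting the hypothesised continuity.

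For the converse, assume the decomposition $\Gamma = \Gamma_1 \sqcup \Gamma_2$ is available and take a sequence $y_n \to y$ in $\mathfrak{T}$ with $\|y_n\| = \|y\| = 1$. Writing $f \coloneqq \lfloor y \rceil$ and $f_n \coloneqq \lfloor y_n \rceil$, these are positive and of $\mathcal{E}$-norm one, so by (A3) it suffices to establish $f_n \to f$ pointwise on $\Gamma$. For $\gamma \in \Gamma_1$, the $\oplus$-compatibility (C2) delivers $y_n(\gamma) \to y(\gamma)$ in $\mathfrak{T}_{\gamma}$ and the Schur property upgrades this to norm convergence, so $f_n(\gamma) \to f(\gamma)$. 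The delicate case is $\gamma_0 \in \Gamma_2$. Combining (C2) with (A4) yields $f(\gamma_0) \leqslant \liminf_n f_n(\gamma_0)$; if the reverse estimate $\limsup_n f_n(\gamma_0) \leqslant f(\gamma_0)$ failed, I would extract a subsequence with $f_{n_k}(\gamma_0) \to L > f(\gamma_0)$. Using countability of $\Gamma$ and the ideal-property bound $|f_{n_k}(\gamma)| \leqslant 1/\|\boldsymbol{e}_{\gamma}\|_{\mathcal{E}}$, a diagonal extraction produces a further subsequence converging pointwise to some $g$ with $g \geqslant f$ pointwise (by (A4) applied coordinatewise) and $g(\gamma_0) = L > f(\gamma_0)$. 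Assumption (A1), the sequential closedness of $\text{Ball}(\mathcal{E})$ under pointwise limits, forces $g \in \text{Ball}(\mathcal{E})$, and then ${\bf SM}(\gamma_0)$ produces $\|f\|_{\mathcal{E}} < \|g\|_{\mathcal{E}} \leqslant 1 = \|f\|_{\mathcal{E}}$, an absurdity. Hence $f_n(\gamma_0) \to f(\gamma_0)$, and $f_n \to f$ pointwise on all of $\Gamma$.

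The principal obstacle is precisely the $\Gamma_2$ coordinate in the reverse direction, where each of the four assumptions plays a distinct role: (C2) plus (A4) supplies only the lower estimate; the diagonal argument promotes it to a subsequential pointwise limit; (A1) keeps this limit inside the unit ball; and finally the strict monotonicity ${\bf SM}(\gamma_0)$ delivers the contradiction. Conversely, the construction in the forward direction demonstrates that if \emph{either} of these ingredients is missing at some coordinate, one can always manufacture a constant $\lfloor y_n \rceil$ that falsifies Hausdorffness-of-$\mathfrak{T}_{\mathcal{E}}$-type convergence, which explains why the dichotomy $\Gamma_1 \sqcup \Gamma_2$ is tight.
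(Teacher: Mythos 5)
Your proof is correct and follows essentially the same route as the paper's: the necessity is the construction from the third part of the proof of Theorem~\ref{THM: H is hereditary} via Lemma~\ref{LEMMMA : geometric lemma SM}, and the sufficiency combines the $\liminf$ bound from (C2) and (A4), a diagonal extraction, the closedness of $\text{Ball}(\mathcal{E})$ from (A1), and ${\bf SM}(\gamma_0)$, exactly as in the paper. The only cosmetic differences are that you conclude the necessity from Hausdorffness of $\mathfrak{T}_{\mathcal{E}}$ where the paper invokes (A3), and you run the $\Gamma_2$ case as a $\limsup$ contradiction at each coordinate rather than identifying the subsequential pointwise limit globally; both variants are sound.
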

	\begin{proof}
		The necessity is quite clear.
		To see this, it is enough to properly interpret the third part of the proof of Theorem~\ref{THM: H is hereditary}. Note that $\lfloor y_{n} \rceil$ does not converge pointwisely to $\lfloor y \rceil$ and apply the assumption (A3).
		
		Now we prove the sufficiency.
		Suppose that the set $\Gamma$ can be decomposed into two disjoint subsets, say $\Gamma_1$ and $\Gamma_2$,
		in such a way that all $X_{\gamma}$'s with $\gamma \in \Gamma_1$ have the Schur property with respect to $\mathfrak{T}_{\gamma}$
		and $\mathcal{E}$ is ${\bf SM(\gamma)}$ for $\gamma \in \Gamma_2$.
		Take the sequence $\{x_n\}_{n=1}^{\infty}$ from the unit sphere of $( \bigoplus_{\gamma \in \Gamma} X_{\gamma} )_{\mathcal{E}} $
		that converge in $\mathfrak{T}$ topology to the norm one vector, say $x$.
		Clearly, both $\lfloor x_{n} \rceil$ with $n \in \mathbb{N}$ and $\lfloor x \rceil$ are of norm one in $\mathcal{E}$.
		We want to show that
		\begin{equation} \tag{$\clubsuit$} \label{EQ : point-wise}
			\lfloor x_{n} \rceil \text{ converges to } \lfloor x \rceil \text{ in the point-wise topology}.
		\end{equation}
		Fix $\gamma_0 \in \Gamma$.
		Clearly, since $x_{n}$ converge in $\mathfrak{T}$ topology to $x$ and, due to (A2),
		the topology $\mathfrak{T}$ is $\oplus$-compatible, so
		\begin{equation} \label{EQ : 321}
			x_{n}(\gamma_0) \text{ converges to } x(\gamma_0) \text{ in the topology } \mathfrak{T}_{\gamma_0}.
		\end{equation}
		Now, let us consider two situations.
		
		{\bf Suppose that $\gamma_0 \in \Gamma_1$.} Then the space $X_{\gamma_0}$ has the Schur property with respect to
		the topology $\mathfrak{T}_{\gamma_0}$.
		Consequently, remembering about \eqref{EQ : 321}, $x_{n}(\gamma_0)$ converge to $x(\gamma_0)$ in norm.
		However, since the norm function $\norm{\bullet}_{\gamma} \colon X_{\gamma} \rightarrow \mathbb{R}$ is continuous,
		so $\lfloor x_{n} \rceil$ converge in the point-wise topology to $\lfloor x \rceil$ on $\Gamma_1$.
		
		{\bf Next, suppose that $\gamma_0 \in \Gamma_2$.}
		Applying the assumptions (A2) and (A4) we conclude that
		\begin{equation}\label{X_tal-lower-semicont}
			\norm{x(\gamma)}_{\gamma} \leqslant \liminf_{n \rightarrow \infty} \norm{x_n(\gamma)}_{\gamma}
		\end{equation}
		for each $\gamma \in \Gamma$. Moreover,
		\begin{equation*}
			\norm{x_{n}(\gamma)}_{\gamma} \norm{\boldsymbol{e}_{\gamma}}_{\mathcal{E}}
				= \norm{\lfloor x_{n} \rceil (\gamma)}_{\mathcal{E}}
				\leqslant \norm{\lfloor x_{n} \rceil}_{\mathcal{E}},
		\end{equation*}
		so the sequence $\{ \norm{x_n(\gamma)}_{\gamma} \}_{n=1}^{\infty}$ is bounded for each $\gamma \in \Gamma$.
		Thus $\{ \norm{x_n(\gamma)}_{\gamma} \}_{n=1}^{\infty}$ contains a convergent subsequence.
		Using the diagonal argument, and passing to a subsequence if necessary, we can find a function $f = \{ f(\gamma) \}_{\gamma \in \Gamma}$
		such that $\lfloor x_{n} \rceil$ converges to $f$ in the point-wise topology.
		Now, remembering about the assumption (A1), we infer that $f \in \mathcal{E}$ and
		\begin{equation} \label{EQ : <= 1}
			\norm{f}_{\mathcal{E}} \leqslant 1.
		\end{equation}
		We claim that
		\begin{equation} \tag{$\heartsuit$} \label{EQ : claim}
			\lfloor x \rceil(\gamma_0) = \abs{f}(\gamma_0).
		\end{equation}
		Clearly, $\lfloor x \rceil(\gamma) \leqslant \abs{f}(\gamma)$ for each $\gamma \in \Gamma$, because otherwise we get a contradiction with (\ref{X_tal-lower-semicont}).
		Therefore, $\lfloor x \rceil(\gamma_0) \leqslant \abs{f}(\gamma_0)$ and it remains to show the reverse inequality.
		To see this, suppose that $\lfloor x \rceil(\gamma_0) < \abs{f}(\gamma_0)$.
		Then, since $\mathcal{E}$ is ${\bf SM}(\gamma_0)$, so $\norm{\lfloor x \rceil}_{\mathcal{E}} < \norm{f}_{\mathcal{E}}$.
		But then, using \eqref{EQ : <= 1},
		\begin{equation*}
			1 = \norm{\lfloor x \rceil}_{\mathcal{E}}
				< \norm{f}_{\mathcal{E}}
				\leqslant 1,
		\end{equation*}
		which is impossible.
		In consequence, the equality \eqref{EQ : claim} holds.
		
		In view of $\gamma_0$'s arbitrariness the claim \eqref{EQ : point-wise} follows.
		However, since we assumed that the topology $\mathfrak{T}_{\mathcal{E}}$ restricted to the unit
		sphere agree with the topology of point-wise convergence, so actually $\lfloor x_{n} \rceil$ converge
		in $\mathfrak{T_{\mathcal{E}}}$ topology to $\lfloor x \rceil$.
		In other words, the mapping
		$\lfloor \bullet \rceil \colon ( \bigoplus_{\gamma \in \Gamma} X_{\gamma} )_{\mathcal{E}} \rightarrow \mathcal{E}$
		is $\mathfrak{T}$-to-$\mathfrak{T}_{\mathcal{E}}$ sequentially continuous when restricted to the unit sphere.
		The proof is complete.
	\end{proof}

	Let us comment on the assumptions (A1), (A3) and (A4) that appear in Theorem~\ref{THM : oba warunki sa rownowazne}.
	Even though is does not look like this, with some rather weak and natural assumptions on $\mathcal{E}$ and $X_{\gamma}$'s
	they are all automatically satisfied.

	\begin{remark}[About Theorem~\ref{THM : oba warunki sa rownowazne}] \label{REMARK : o warunkach w twierdzeniu}
		\hfill
		
		{\bf (A1)} Let $X$ be a Banach sequence space.
		It follows from \cite[Lemma~5, p.~99]{KA82} that $\text{Ball}(X)$ is sequentially closed with respect to
		the point-wise topology, if, and only if, the space $X$ has the {\it weak Fatou property} (this property is sometimes called
		the {\it semi-Fatou property} or the {\it order semi-continuity}) and is {\it monotone complete}\footnote{Solid linear topologies
		with this property are called the {\it Levi topologies} (see, for example, \cite[Definition~9.3, p.~61]{AB03}).}.
		This means, due to \cite[Lemma~4]{KA82}, that
  		\begin{itemize}
  			\item[$\bullet$] the norm function $x \mapsto \norm{x}_X$ is sequentially
  			lower semi-continuous with respect to the point-wise topology (the {\bf weak Fatou property});
  			\item[$\bullet$] for any increasing sequence $\{ x_n \}_{n=1}^{\infty}$
  			of positive functions from $\text{Ball}(X)$, it follows that $x_n$ converges to some $x \in X$ in the point-wise topology
  			(the {\bf monotone completeness}).
  		\end{itemize}
  		In the realm of Banach ideal spaces one usually refer to the conglomerate of both properties, that is, the weak Fatou property
		together with monotone completeness, as the {\it Fatou property}.
		Note that the classical sequence spaces, like Lebesgue spaces $\ell_p$, Orlicz spaces $\ell_F$ and Lorentz spaces $d(w,p)$, have the Fatou property.
		On the other hand, the generic\footnote{One can show that a separable Banach sequence space fails to have the Fatou property if, and only if,
		the space $X$ contains a subspace isomorphic to $c_0$ (see, for example, \cite[Theorem~2.4.12, p.~92]{MN91}). Note also that separable
		Banach sequence spaces with the Fatou property are sometimes called the {\it KB-spaces} (see \cite[Definition~2.4.11, p.~92]{MN91}).}
		example of the Banach sequence space which fails to have the Fatou property is $c_0$.
		The culprit for this is the sequence
		\begin{equation*}
			x_1 = (1, 0, 0, 0, ...), \quad x_2 = (1, 1, 0, 0, ...), \quad x_3 = (1, 1, 1, 0, ...), \quad \text{ and so on.}
		\end{equation*}
		However, this is not the only specimen of this type.
		In fact, one can easily construct many more examples in the following way:
		For a given Banach sequence space $X$ with $X_o \neq X$ put
		\begin{equation} \label{EQ : not Fatou}
			\vertiii{x} \coloneqq \norm{x}_X + \lambda \, \text{dist}(x,X_o),
		\end{equation}
		where $\lambda > 0$ and $x \mapsto \text{dist}(x,X_o) \coloneqq \inf \{ \norm{x - y}_X \colon y \in X_o \}$ is the distance from $x \in X$ to the ideal $X_o$.
		Evidently, \eqref{EQ : not Fatou} is an equivalent norm on $X$.
		Moreover, the space $X$ furnished with this norm fails to have the Fatou property.
		
		{\bf (A3)} This assumption seems to be specifically adapted to the Kadets--Klee property.
		Plainly, if the space $\mathcal{E}$ has the property ${\bf H}(\textit{point-wise})$
		then for positive sequences on the unit sphere in $\mathcal{E}$ the topology $\mathfrak{T}_{\mathcal{E}}$
		agree with the topology of point-wise convergence.
		No frills, one can just assume that $\mathcal{E}$ has the property ${\bf H}(\textit{point-wise})$ (see Remark~\ref{REMARK : minimal topologies}).
		
		{\bf (A4)} This is a very lenient requirement.
		Once again, in view of \cite[Lemma~4]{KA82}, a Banach sequence space $X$ has the weak Fatou property if, and only if,
		the norm function $x \mapsto \norm{x}_X$ is sequentially lower semi-continuous with respect to point-wise topology.
		Note that $c_0$ and, more generally, any subspace $X_o$ of order continuous element in $X$, have also this property
		(but, as we already explained above, fails to have the Fatou property!).
		\demo 
	\end{remark}

	\section{{\bf On $\oplus$-compatible topologies}} \label{SECTION : Play Doh}
	
	The practical utility of Theorems~\ref{THM: H is hereditary}, \ref{THM: main theorem} and \ref{THM : oba warunki sa rownowazne}
	is still somehow doubtful unless we can find some natural implementations of Definition~\ref{DEF: admissible topology}
	in the form of $\oplus$-compatible topologies.
	Fortunately, there is a plethora of natural and important examples of such topologies.
	Of the most obvious ones, which can be depicted below
	\begin{center}
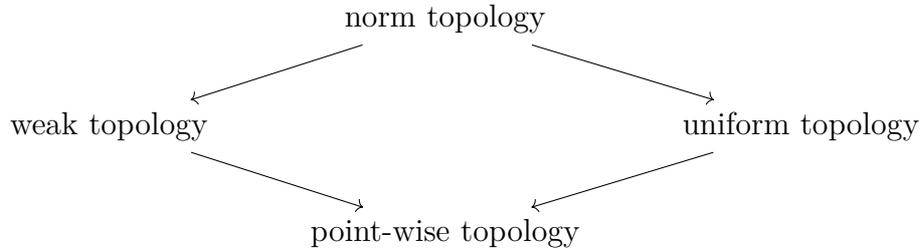

		\begin{tikzcd}
			& \text{norm topology} \arrow[ld] \arrow[rd] &              \\
			\text{weak topology} \arrow[rd] &                         & \text{uniform topology} \arrow[ld] \\
			& \text{point-wise topology}                       &             
		\end{tikzcd}
		\captionof{figure}{Three natural topologies on a Banach sequence space}
	\end{center}
	we will be mainly interested here in topologies that, so to speak, fill the space between them.
	Anyway, we refer the reader not interested in topological divagations directly to Section~\ref{SECTION : EXAMPLES} where we will
	focus exclusively on the weak topology and the point-wise topology.
	(Note also that the topology of uniform convergence defies naive analysis, so we will postpone its study for another occasion; {\it cf}. Table~\ref{TABLE : trzy H}.)

	\subsection{Projective topologies} \label{SUBSECTION : Projective toplogies}
	
	Let $X$ be a vector space and let $\{ X_j \}_{j \in J}$ be a family of topological vector spaces.
	Further, let $\{ \varphi_j \colon X \rightarrow X_j \}_{j \in J}$ be a family of linear mappings hereinafter called a {\bf spectrum}.
	The coarsest topology on $X$ for which all $\varphi_j$'s are continuous is called the {\bf projective topology}
	generated by the spectrum $\{ \varphi_j \colon X \rightarrow X_j \}_{j \in J}$.
	In other words, the projective topology generated by the spectrum $\{ \varphi_j \colon X \rightarrow X_j \}_{j \in J}$
	is the least upper bound of those topologies which are inverse images of the topologies of the space $X_j$ by the mapings $\varphi_j$.
	Without much effort one can show that the projective topology is linear (see, for example, \cite[Proposition~1, p.~35]{Jar81}).
	When equipped with this topology, the space $X$ is called the {\bf projective limit} of $\{ \varphi_j \colon X \rightarrow X_j \}_{j \in J}$.
	
	Due to the universal property of the product topology, the spectrum $\{ \varphi_j \colon X \rightarrow X_j \}_{j \in J}$
	determines a unique continuous mapping $\varphi \colon X \rightarrow \prod_{j \in J} X_j$ via $\varphi(x) \coloneqq \{ \varphi_j(x) \}_{j \in J}$.
	We will call it the {\bf evaluation map}.
	The evaluation map $\varphi$ is injective if, and only if, the spectrum $\{ \varphi_j \colon X \rightarrow X_j \}_{j \in J}$
	{\bf separates points}, that is, for all $x,y \in X$, there exists $j \in J$ such that $\varphi_j(x) \neq \varphi_j(y)$, whenever $x \neq y$.
	Actually, in such a situation, the evaluation map $\varphi$ is an isomorphism of $X$ endowed with the projective topology
	onto a subspace of the product $\prod_{j \in J} X_j$.
	
	Let us now explain how to generate the topology on the direct sum $(\bigoplus_{\gamma \in \Gamma} X_{\gamma})_{\mathcal{E}}$
	that agree with a given projective topologies on their components.
	The construction is rather straightforward, but writing out the details requires a bit of honest work.
	
	\begin{construction}[Projective topology on direct sum] \label{CONSTRUCTION : projective}
		Let $\{ X_{\gamma} \}_{\gamma \in \Gamma}$ be a family of Banach spaces and let $\mathcal{E}$ be a Banach sequence space on $\Gamma$.
		We equip the spaces $X_{\gamma}$ and the space $\mathcal{E}$ with the projective topologies $\mathfrak{T}_{\gamma}(\textit{proj})$ and $\mathfrak{T}_{\mathcal{E}}(\textit{proj})$
		generated by the spectra $\{ \varphi_j^{\gamma} \colon X_{\gamma} \rightarrow Y_j \}_{j \in J}$ and, respectively, $\{ \psi_j \colon \mathcal{E} \rightarrow Y_j \}_{j \in J}$.
		Suppose that
		\begin{itemize}
			\item[(P1)] all topological spaces $Y_j$ are Hausdorff;
			\item[(P2)] both families $\{ \varphi_j^{\gamma} \colon X_{\gamma} \rightarrow Y_j \}_{j \in J}$ and $\{ \psi_j \colon \mathcal{E} \rightarrow Y_j \}_{j \in J}$
			separates points;
			\item[(P3)] for all $\gamma \in \Gamma$ the mapping $\varphi_j^{\gamma} \colon X_{\gamma} \rightarrow Y_j$ is continuous when $X_{\gamma}$
			is equipped with the norm topology;
			\item[(P4)] and, similarly, all mappings $\psi_j \colon \mathcal{E} \rightarrow Y_j$ are continuous when $\mathcal{E}$
			is equipped with the norm topology.
		\end{itemize}
		
		It is straightforward to see, that the above four conditions guarantee that the projective topologies generated by the spectra
		$\{ \varphi_j^{\gamma} \colon X_{\gamma} \rightarrow Y_j \}_{j \in J}$ and $\{ \psi_j \colon \mathcal{E} \rightarrow Y_j \}_{j \in J}$
		are Hausdorff and coarser than the respective norm topologies (see \cite[Proposition~2.4.4, p.~204]{Meg98}.
		
		Now, let us consider the following diagram
		\begin{equation} \label{EQ : diagram projective}
			\begin{tikzcd}
				\mathcal{E} \arrow[rd, "\psi_j"']
				& ( \bigoplus_{\gamma \in \Gamma} X_{\gamma} )_{\mathcal{E}} \arrow[d, dotted, shift left] \arrow[l, "\pi"'] \arrow[r, "\pi_{\gamma}"] \arrow[d, dotted, shift right]
				& X_{\gamma} \arrow[ld, "\varphi_j^{\gamma}"] \\
				& Y_j                                                  &                  
			\end{tikzcd}
		\end{equation}
		The presence of an arrow $\pi$ between $( \bigoplus_{\gamma \in \Gamma} X_{\gamma} )_{\mathcal{E}}$ and $\mathcal{E}$ is due to the fact that
		the space $( \bigoplus_{\gamma \in \Gamma} X_{\gamma} )_{\mathcal{E}}$ contains a complemented subspace isometrically isomorphic to $\mathcal{E}$.
		Looking at the diagram \eqref{EQ : diagram projective}, it is natural to equip the space $( \bigoplus_{\gamma \in \Gamma} X_{\gamma} )_{\mathcal{E}}$
		with the projective topology generated by the, so to speak, joint spectrum
		\begin{equation} \label{EQ : spectrum direct sum}
			\{ \varphi_j^{\gamma} \circ \pi_{\gamma} \colon ( \bigoplus_{\gamma \in \Gamma} X_{\gamma} )_{\mathcal{E}} \rightarrow Y_j \}_{(j,\gamma) \in J \times \Gamma}
			\cup \{ \psi_j \circ \pi \colon ( \bigoplus_{\gamma \in \Gamma} X_{\gamma} )_{\mathcal{E}} \rightarrow Y_j \}_{j \in J}.
		\end{equation}
		In what follows we will refer to this topology as $\mathfrak{T}(\textit{proj})$ topology.
		\demo
	\end{construction}
	
	\begin{lemma}[$\oplus$-compatible projective topology] \label{projective+compatible}
		{\it Suppose all assumptions of Construction \ref{CONSTRUCTION : projective} are satisfied.
		Then the projective topology $\mathfrak{T}(\textit{proj})$ on the space $( \bigoplus_{\gamma \in \Gamma} X_{\gamma} )_{\mathcal{E}}$
		generated by the spectrum \eqref{EQ : spectrum direct sum} is the $\oplus$-compatible topology with the projective topologies of their components.}
	\end{lemma}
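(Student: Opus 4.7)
My plan is to verify Hausdorffness and each of the conditions (C1)--(C4) of Definition~\ref{DEF: admissible topology}, and in every case the argument reduces to the universal property of projective topologies: a linear map into a space carrying a projective topology is (sequentially) continuous if, and only if, its composition with each generator of the defining spectrum is (sequentially) continuous.

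First, I would dispatch Hausdorffness and (C1) simultaneously. Hausdorffness follows because the family $\{ \pi_{\gamma} \}_{\gamma \in \Gamma}$ jointly separates points of $( \bigoplus_{\gamma \in \Gamma} X_{\gamma} )_{\mathcal{E}}$ and, by (P2), each spectrum $\{ \varphi_j^{\gamma} \}_{j \in J}$ separates points on $X_{\gamma}$, so the joint spectrum \eqref{EQ : spectrum direct sum} separates points of the direct sum. For (C1), every generator $\varphi_j^{\gamma} \circ \pi_{\gamma}$ or $\psi_j \circ \pi$ is norm-continuous by (P3) and (P4) together with the norm-continuity of the coordinate projections $\pi_{\gamma}$ and of the canonical complementation projection $\pi$; hence $\mathfrak{T}(\textit{proj})$ is coarser than the norm topology. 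Condition (C2) is then a one-line application of the universal property: each $\varphi_j^{\gamma} \circ \pi_{\gamma}$ is by definition a generator of $\mathfrak{T}(\textit{proj})$, so $\pi_{\gamma}$ is $\mathfrak{T}(\textit{proj})$-to-$\mathfrak{T}_{\gamma}(\textit{proj})$ continuous.

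For (C3) and (C4) I would compose the embeddings $j_{\gamma}$ and $j_{\mathcal{E}}$ with every generator of $\mathfrak{T}(\textit{proj})$ and verify continuity in the source. The ``diagonal'' compositions $\varphi_j^{\gamma} \circ \pi_{\gamma} \circ j_{\gamma} = \varphi_j^{\gamma}$ and $\psi_j \circ \pi \circ j_{\mathcal{E}} = \psi_j$ (the latter since $\pi \circ j_{\mathcal{E}} = \mathrm{id}_{\mathcal{E}}$) are themselves generators of $\mathfrak{T}_{\gamma}(\textit{proj})$ and $\mathfrak{T}_{\mathcal{E}}(\textit{proj})$, respectively, while the off-diagonal compositions $\varphi_j^{\gamma'} \circ \pi_{\gamma'} \circ j_{\gamma}$ with $\gamma \neq \gamma'$ are simply zero. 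The genuinely delicate checks are the two ``cross'' compositions $\psi_j \circ \pi \circ j_{\gamma}$ and $\varphi_j^{\gamma} \circ \pi_{\gamma} \circ j_{\mathcal{E}}$, which I expect to be the main obstacle. For the natural form of the complementation projection---namely $\pi(y) = \sum_{\gamma \in \Gamma} \langle y(\gamma), x_{\gamma}^{*} \rangle \boldsymbol{e}_{\gamma}$ with fixed functionals $x_{\gamma}^{*} \in X_{\gamma}^{*}$ satisfying $\langle x_{\gamma}, x_{\gamma}^{*} \rangle = 1$---each of these cross compositions has one-dimensional range and reduces to a scalar coordinate functional multiplied by a fixed vector in $Y_j$. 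Closing the argument requires knowing that these rank-one maps are continuous with respect to the source projective topologies, which I would handle by either augmenting the defining spectra to include the relevant coordinate functionals, or verifying directly from the concrete form of $\pi$ that the induced coordinate functionals are already $\mathfrak{T}_{\gamma}(\textit{proj})$- or $\mathfrak{T}_{\mathcal{E}}(\textit{proj})$-continuous.
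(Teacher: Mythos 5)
Your overall strategy is exactly the paper's: both proofs reduce every one of (C1)--(C4) to the universal property of projective limits, checking that the composition of the relevant map with each generator of the target spectrum is continuous, and both get Hausdorffness from (P1)--(P2) and (C1) from (P3)--(P4) plus norm-continuity of $\pi_{\gamma}$ and $\pi$. The difference is that you push the bookkeeping further than the paper does. The paper's treatment of (C3) verifies only the ``diagonal'' composition $(\varphi_j^{\gamma}\circ\pi_{\gamma})\circ j_{\gamma}=\varphi_j^{\gamma}$ and then dismisses (C4) with ``this is no different from the proof of (C3)''; it never mentions the cross compositions $\psi_j\circ\pi\circ j_{\gamma}$ and $\varphi_j^{\gamma}\circ\pi_{\gamma}\circ j_{\mathcal{E}}$ that you single out. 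You are right that these are the only genuinely delicate terms: with $\pi$ of the form $y\mapsto\sum_{\gamma}\langle y(\gamma),x_{\gamma}^{*}\rangle\boldsymbol{e}_{\gamma}$ they reduce to $x\mapsto\langle x,x_{\gamma}^{*}\rangle\,\psi_j(\boldsymbol{e}_{\gamma})$ and $a\mapsto a(\gamma)\,\varphi_j^{\gamma}(x_{\gamma})$, and their (sequential) continuity for $\mathfrak{T}_{\gamma}(\textit{proj})$, resp.\ $\mathfrak{T}_{\mathcal{E}}(\textit{proj})$, is not a formal consequence of (P1)--(P4): if the spectrum on $X_{\gamma}$ is a small point-separating family, the fixed norming functional $x_{\gamma}^{*}$ need not be even sequentially continuous for the resulting projective topology. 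So your hesitation marks a real gap in the paper's own argument, not a defect of your reading.

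That said, your proposal does not actually close this gap either --- you state two possible remedies (enlarging the spectra to contain the coordinate functionals, or verifying their continuity from the concrete form of $\pi$) without executing one. Either remedy is the right move, and it is worth noting that in every instance the paper actually uses (the weak topology, where the spectra are the full duals, and the $\sigma(X,\mathcal{J})$-topologies, where $\mathcal{J}$ contains the simple functions) the required coordinate functionals do belong to the defining spectra, so the cross terms are harmless there. To make the lemma correct at the stated level of generality one should add the continuity of these coordinate functionals as an explicit hypothesis alongside (P1)--(P4).
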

	\begin{proof}
		Due to (P1) and (P2) the topology $\mathfrak{T}(\textit{proj})$ is a linear Hausdorff topology on $( \bigoplus_{\gamma \in \Gamma} X_{\gamma} )_{\mathcal{E}}$.
		Now, according to Definition~\ref{DEF: admissible topology}, it remains to show the conditions (C1), (C2), (C3) and (C4).
				
		{\bf (C1)} 
		Since the mapping $\pi_{\gamma} \colon ( \bigoplus_{\gamma \in \Gamma} X_{\gamma} )_{\mathcal{E}} \rightarrow X_{\gamma}$
		is norm-to-norm continuous, so using (P3), we infer that for all $\gamma \in \Gamma$ and $j \in J$, the mapping $\varphi_j^{\gamma} \circ \pi_{\gamma}$
		is continuous when acting from $( \bigoplus_{\gamma \in \Gamma} X_{\gamma} )_{\mathcal{E}}$ with the norm topology into $Y_j$.
		Clearly, remembering about (P4), the same can be said about $\psi_j \circ T$.
		Consequently, the projective topology on $( \bigoplus_{\gamma \in \Gamma} X_{\gamma} )_{\mathcal{E}}$ is coarser than the norm topology.
		
		{\bf (C2)}
		Next, we need to show that for each $\gamma \in \Gamma$ the projection
		$\pi_{\gamma} \colon ( \bigoplus_{\gamma \in \Gamma} X_{\gamma} )_{\mathcal{E}} \rightarrow X_{\gamma}$
		is $\mathfrak{T}(\textit{proj})$-to-$\mathfrak{T}_{\gamma}(\textit{proj})$ sequentially continuous.
		Let us consider the following diagram
		\begin{equation*}
			\begin{tikzcd}
				(\bigoplus_{\gamma \in \Gamma} X_{\gamma})_{\mathcal{E}} \arrow[d, "\varphi_j^{\gamma} \circ \pi_{\gamma}"'] \arrow[r, "\pi_{\gamma}"] & X_{\gamma} \arrow[ld, "\varphi_j^{\gamma}"] \\
				Y_j                   &                       
			\end{tikzcd}
		\end{equation*}
		Due to \cite[Proposition~1(1), p.~2]{Gro73}, $\pi_{\gamma}$ is continuous if, and only if, $\pi_{\gamma} \circ \varphi_j^{\gamma}$ is continuous.
		However, directly from the definition \eqref{EQ : spectrum direct sum}, the mapping $\pi_{\gamma} \circ \varphi_j^{\gamma}$ is continuous when acting
		from the space $( \bigoplus_{\gamma \in \Gamma} X_{\gamma} )_{\mathcal{E}}$ equipped with the projective topology $\mathfrak{T}(\textit{proj})$.
		
		{\bf (C3)}
		Now, we are going to show that the embedding
		$\jmath_{\gamma} \colon X_{\gamma} \rightarrow ( \bigoplus_{\gamma \in \Gamma} X_{\gamma} )_{\mathcal{E}}$
		is $\mathfrak{T}_{\gamma}(\textit{proj})$-to-$\mathfrak{T}(\textit{proj})$ sequentially continuous.
		Let us take a look at the diagram
		\begin{equation*}
			\begin{tikzcd}
				(\bigoplus_{\gamma \in \Gamma} X_{\gamma})_{\mathcal{E}} \arrow[d, "\varphi_j^{\gamma} \circ \pi_{\gamma}"'] & X_{\gamma} \arrow[l, "j_{\gamma}"'] \arrow[ld, "\varphi_j^{\gamma}"] \\
				Y_j                   &                       
			\end{tikzcd}
		\end{equation*}
		Again, due to \cite[Proposition~1(1), p.~2]{Gro73}, $\jmath_{\gamma}$ is continuous if, and only if, $(\varphi_j^{\gamma} \circ \pi_{\gamma}) \circ j_{\gamma}$
		is continuous.
		However, since
		\begin{equation*}
			(\varphi_j^{\gamma} \circ \pi_{\gamma}) \circ j_{\gamma} = \varphi_j^{\gamma},
		\end{equation*}
		so our claim follows.   
		
		{\bf (C4)} Finally, it remains to show that the embedding 
		$j_{\mathcal{E}} \colon \mathcal{E} \rightarrow ( \bigoplus_{\gamma \in \Gamma} X_{\gamma} )_{\mathcal{E}}$
		is $\mathfrak{T}_{\mathcal{E}}(\textit{proj})$-to-$\mathfrak{T}(\textit{proj})$ sequentially continuous.
		Luckily, this is no different from the proof of (C3).
	\end{proof}
	
	Several topological constructions like, for example:
	$\bullet$ the subspace topology;
	$\bullet$ the product topology;
	$\bullet$ the inverse limit;
	$\bullet$ and the weak topology, can be seen as just a special cases of the projective topology (see, for example, \cite[Chapter~0]{Gro73}).
	
	To conclude this section, let us try to say something concrete.
	
	\begin{example}[Projective topology between norm and weak topologies]
		Let $X$ be a Banach space. Further, let $Y$ be a closed subspace of $X$ that has infinite dimension and infinite co-dimension.
		Let us consider the projective topology $\mathfrak{T}$ on $X$ generated by the joint spectrum
		$\{ \varphi \colon X \rightarrow \mathbb{K} \}_{\varphi \in X^{*}}$ and $\{ q \colon X \rightarrow X / Y \}$,
		where $q \colon X \rightarrow X / Y$ is the quotient map.
		We claim that $\mathfrak{T}$ is coarser than the norm topology and finer than the weak topology.
		Indeed, since $Y \subset X$, so $\mathfrak{T}$ restricted to $Y$ is nothing else, but the weak topology.
		Thus, since $Y$ is infinite dimensional, it follows that $\mathfrak{T}$ is coarser than the norm topology.
		Going ahead, just from the definition, $q \colon X \rightarrow X / Y$ is $\mathfrak{T}$-to-norm continuous.
		However, since $X / Y$ is infinite dimensional, so $\mathfrak{T}$ must be finer than the weak topology.
		\demo
	\end{example}

	\subsection{Inductive topologies}
	
	Let $X$ be a vector space.
	Further, let $\{ X_j \}_{j \in J}$ be a family of topological vector spaces and let $\varphi_j \colon X_j \rightarrow X$
	be a linear mapping.
	We call the finest vector topology on $X$ for which all $\varphi_j$'s are continuous the {\bf inductive topology} for
	the spectrum $\{ \varphi_j \colon X_j \rightarrow X \}_{j \in J}$ (such a topology always exists; see, for example, \cite[Proposition~1, p.~74]{Jar81}).
	When equipped with this topology, the space $X$ is called the {\bf inductive limit} of $\{ \varphi_j \colon X_j \rightarrow X \}_{j \in J}$.
	
	By dualizing Construction~\ref{CONSTRUCTION : projective} we obtain
	
	\begin{construction}[Inductive topology on direct sum] \label{CONSTRUCTION : inductive}
		Let $\{ X_{\gamma} \}_{\gamma \in \Gamma}$ be a countably family of Banach spaces and let $\mathcal{E}$ be a Banach sequence space on $\Gamma$.
		We equip the spaces $X_{\gamma}$ and the space $\mathcal{E}$ with the inductive topologies $\mathfrak{T}_{\gamma}(\textit{ind})$ and $\mathfrak{T}_{\mathcal{E}}(\textit{ind})$
		generated by the spectra $\{ \varphi_j^{\gamma} \colon Y_j \rightarrow X_{\gamma} \}_{j \in J}$ and, respectively, $\{ \psi_j \colon Y_j \rightarrow \mathcal{E} \}_{j \in J}$.
		Suppose that
		\begin{itemize}
			\item[(I1)] all topological spaces $Y_j$ are Hausdorff;
			\item[(I2)] both families $\{ \varphi_j^{\gamma} \colon Y_j \rightarrow X_{\gamma} \}_{j \in J}$ and $\{ \psi_j \colon Y_j \colon \rightarrow \mathcal{E} \}_{j \in J}$
			separates points;
			\item[(I3)] for all $\gamma \in \Gamma$ the mapping $\varphi_j^{\gamma} \colon Y_j \rightarrow X_{\gamma}$ is continuous when $X_{\gamma}$
			is equipped with the norm topology;
			\item[(I4)] and, similarly, all mappings $\psi_j \colon Y_j \rightarrow \mathcal{E}$ are continuous when $\mathcal{E}$
			is equipped with the norm topology.
		\end{itemize}
		The inductive topology on $(\bigoplus_{\gamma \in \Gamma} X_{\gamma})_{\mathcal{E}}$ generated by the spectrum
		\begin{equation} \label{EQ : ind spectrum}
			\{ j_{\gamma} \circ \varphi_j^{\gamma} \colon Y_j \rightarrow ( \bigoplus_{\gamma \in \Gamma} X_{\gamma} )_{\mathcal{E}} \}_{(j,\gamma) \in J \times \Gamma}
			\cup \{ j_{\mathcal{E}} \circ \psi_j \colon Y_j \rightarrow ( \bigoplus_{\gamma \in \Gamma} X_{\gamma} )_{\mathcal{E}} \}_{j \in J}.
		\end{equation}
		will be denoted by $\mathfrak{T}(\textit{ind})$.
		\demo
	\end{construction}

	Looking at Lemma~\ref{projective+compatible}, it is routine to verify the following
	
	\begin{lemma}[$\oplus$-compatible inductive topology]
		{\it Suppose all assumptions of Construction \ref{CONSTRUCTION : inductive} are satisfied.
		Then the inductive topology $\mathfrak{T}(\textit{ind})$ on the space $( \bigoplus_{\gamma \in \Gamma} X_{\gamma} )_{\mathcal{E}}$
		generated by the spectrum \eqref{EQ : ind spectrum} is the $\oplus$-compatible topology with the inductive topologies of their components.}
	\end{lemma}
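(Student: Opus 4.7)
My plan is to verify the four conditions (C1)--(C4) of Definition~\ref{DEF: admissible topology} for $\mathfrak{T}(\textit{ind})$, treating the argument as the categorical dual of the proof of Lemma~\ref{projective+compatible}. The central tool throughout will be the universal property of inductive limits: a linear map $f \colon X \to Z$ whose domain $X$ carries the inductive topology is continuous if and only if $f \circ \varphi_j$ is continuous for every generator $\varphi_j$ of that topology.

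Assumptions (I1) and (I2) guarantee, by standard results on inductive limits (see, for example, \cite[p.~74]{Jar81}), that $\mathfrak{T}(\textit{ind})$ is a linear Hausdorff topology on $(\bigoplus_{\gamma \in \Gamma} X_\gamma)_{\mathcal{E}}$. For (C1), I would combine (I3) with the norm-to-norm continuity of the isometric embedding $j_\gamma$ to conclude that each generator $j_\gamma \circ \varphi_j^\gamma$ of $\mathfrak{T}(\textit{ind})$ is norm-continuous when the codomain is given the norm topology, and do the same for $j_{\mathcal{E}} \circ \psi_j$ using (I4), so that the norm topology is comparable with those for which the generating spectrum \eqref{EQ : ind spectrum} is continuous, which yields (C1).

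The two embedding conditions (C3) and (C4) are essentially immediate: $j_\gamma \circ \varphi_j^\gamma$ is by definition a generator of $\mathfrak{T}(\textit{ind})$, so the universal property applied to the inductive topology on $X_\gamma$ reduces the continuity of $j_\gamma$ to the continuity of $j_\gamma \circ \varphi_j^\gamma$, which holds by construction. An identical argument covers $j_{\mathcal{E}}$ using the $\psi_j$'s in place of the $\varphi_j^\gamma$'s.

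The main work lies in (C2). To show $\pi_\gamma$ is $\mathfrak{T}(\textit{ind})$-to-$\mathfrak{T}_\gamma(\textit{ind})$ sequentially continuous, I would apply the universal property to the inductive topology on the direct sum and reduce the problem to checking that each composition of $\pi_\gamma$ with a generator lands continuously in $X_\gamma$ endowed with $\mathfrak{T}_\gamma(\textit{ind})$. The composition $\pi_\gamma \circ j_{\gamma'} \circ \varphi_j^{\gamma'}$ collapses to $\varphi_j^\gamma$ when $\gamma' = \gamma$ (continuous by the very definition of $\mathfrak{T}_\gamma(\textit{ind})$) and to the zero map otherwise. The potentially awkward family is $\pi_\gamma \circ j_{\mathcal{E}} \circ \psi_j$, which by the explicit description in Notation~\ref{NOTATION : direct sums} factors as the rank-one scalar multiplication $a \mapsto a \, x_\gamma$ precomposed with the coordinate evaluation of $\psi_j$ at $\gamma$; verifying that this one-dimensional map is continuous into $\mathfrak{T}_\gamma(\textit{ind})$ is the step I expect to require the most care, and is where any genuine obstacle to dualizing the projective argument would lurk.
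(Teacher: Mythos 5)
Your reductions of (C2), (C3) and (C4) to the universal property of inductive limits are sound, and you are right that the only composition needing an actual computation is $\pi_{\gamma} \circ j_{\mathcal{E}} \circ \psi_j$, which factors through a rank-one map and is harmless. The genuine problem is (C1), and it is not a detail: the argument you give there proves the opposite of what is needed. The topology $\mathfrak{T}(\textit{ind})$ is by definition the \emph{finest} vector topology on $( \bigoplus_{\gamma \in \Gamma} X_{\gamma} )_{\mathcal{E}}$ rendering all the generators in \eqref{EQ : ind spectrum} continuous. Your observation that (I3), (I4) together with the norm-to-norm continuity of $j_{\gamma}$ and $j_{\mathcal{E}}$ make every generator continuous into the \emph{norm} topology shows precisely that the norm topology is one member of the family over which this supremum is taken; consequently $\mathfrak{T}(\textit{ind})$ is \emph{finer} than the norm topology, whereas (C1) demands that it be coarser. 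This is exactly the point at which the naive dualization of Lemma~\ref{projective+compatible} breaks down: for the projective topology the extremal property (\enquote{coarsest such that \dots}) points in the right direction, for the inductive topology it points the wrong way. The same flip undermines your opening sentence as well: unlike projective limits of Hausdorff spaces with point-separating spectra, inductive limits of Hausdorff topological vector spaces need not be Hausdorff (quotients by non-closed subspaces are the standard counterexample), so (I1)--(I2) do not by themselves deliver the Hausdorff requirement of Definition~\ref{DEF: admissible topology}.

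For what it is worth, the paper supplies no proof of this lemma --- it only asserts that the verification is routine by analogy with the projective case --- so there is no written argument to compare yours against; but as it stands your (C1) step would fail, and any correct proof would need an additional hypothesis forcing $\mathfrak{T}(\textit{ind})$ to lie below the norm topology (for instance, that the norm topology is itself already the finest vector topology making the spectrum \eqref{EQ : ind spectrum} continuous), which the stated assumptions (I1)--(I4) do not provide.
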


	Many important constructions of topologies like, for example:
	$\bullet$ the quotient topology;
	$\bullet$ the disjoint union topology;
	$\bullet$ the direct limit;
	and $\bullet$ the weak$^{*}$ topology, are just a special instances of the inductive topology construction (see, for example, \cite[Chapter~0]{Gro73}).
	
	\subsection{Topologies generated by order ideals}
	
	Throughout this section, by $X_{\gamma}$ with $\gamma \in \Gamma$ we will mean a Banach function space with the Fatou property defined over
	a complete and $\sigma$-finite measure space $(\Omega_{\gamma},\Sigma_{\gamma},\mu_{\gamma})$ (see Notation~\ref{NOTATION : direct sum as BFS} for references).
	Then, it is routine to verify that the space $( \bigoplus_{\gamma \in \Gamma} X_{\gamma} )_{\mathcal{E}}$ can be viewed as
	a Banach function space itself.
	To see this, just note that the space $( \bigoplus_{\gamma \in \Gamma} X_{\gamma} )_{\mathcal{E}}$ is a linear subspace of $L_0(\Delta,\mathcal{A},m)$
	with the ideal property.
	Here, by $\Delta$ we understand a disjoint union $\bigsqcup_{\gamma \in \Gamma} \Omega_{\gamma}$ of $\Omega_{\gamma}$'s,
	while the measure $m$ is defined as
	$$m(A) \coloneqq \sum_{\gamma \in \Gamma} \mu_{\gamma}(A \cap \Omega_{\gamma})$$
	for $A \subset \bigsqcup_{\gamma \in \Gamma} \Omega_{\gamma}$.
	Moreover, as is easy to check, the space $( \bigoplus_{\gamma \in \Gamma} X_{\gamma} )_{\mathcal{E}}$ has the Fatou property.
	
	\begin{construction}[Weak topologies generated by order ideals]
		Let $X$ be a Banach function space on $(\Omega,\Sigma,\mu)$ with the Fatou property.
		Further, let $\mathcal{J}$ be an order ideal of the K{\" o}the dual $X^{\times}$ of $X$ containing simple functions
		(see \cite[Definition~3.7, p.~16]{BS88}).
		Following \cite[p.~24]{BS88}, let us consider the family of semi-norms $p \colon X \rightarrow \mathbb{R}$ defined as
		\begin{equation} \label{EQ : seminorms}
			p(f) \coloneqq \abs{ \int_{\Omega} fg \, d \mu},
		\end{equation}
		where $f \in X$ and $g \in \mathcal{J}$.
		Since $\mathcal{J}$ is a norm-fundamental\footnote{Recall that a closed linear subspace $Y$ of the topological dual $X^{*}$
		of a Banach space $X$ is said to be {\bf norm-fundamental} if
		$\norm{x}_X = \sup\{ \abs{\langle x^{*}, x \rangle} \colon x^{*} \in Y \text{ and } x^{*} \in \text{Ball}(X^{*}) \}$.
		In other words, $Y$ is norm-fundamental provided it contains enough forms to reproduce the norm of any $x \in X$.}
		subspace of $X^{*}$, so the collection \eqref{EQ : seminorms} is a separating family which endows $X$ with the structure
		of a Hausdorff locally convex topological vector space.
		We will denote this topology by $\sigma(X,\mathcal{J})$.
		Clearly,
		\begin{equation} \label{EQ : WKDC1}
			f_n \overset{\sigma(X,\mathcal{J})}{\rightarrow} f
		\end{equation}
		if, and only if,
		\begin{equation} \label{EQ : WKDC2}
			\int_{\Omega} f_n g \, d \mu \rightarrow \int_{\Omega} f g \, d \mu \quad \text{ for each } \quad g \in \mathcal{J}.
		\end{equation}
		\demo
	\end{construction}
	
	There are perhaps two natural ideals in $X^{\times}$ containing simple functions, namely,
	$\Lambda_{X^{\times}}$, that is, the closure of simple functions in $X^{\times}$, and $X^{\times}$ itself.
	Let us give them some due attention (see also \cite{CDSS96}).
	Plainly, the $\sigma(X,\Lambda_{X^{\times}})$-topology coincide with the weak topology if $X$ has a separable dual
	(this is easy, since $X^{*}$ is separable, so $X$ is separable as well and, in consequence, $X^{*} = X^{\times} = \Lambda_{X^{\times}}$),
	and with the weak$^{*}$ topology if $X$ is a dual space of some separable Banach function space.
	Moreover, if $X$ is separable, then the weak topology on $X$ is nothing else but $\sigma(X,X^{\times})$.
	In general, however, $\sigma(X,X^{\times})$ is finer than the weak topology, but also coarser than the norm topology.
	
	\begin{lemma}
		{\it Both topologies $\sigma(\mathfrak{X},\Lambda_{\mathfrak{X}^{\times}})$ and $\sigma(\mathfrak{X},\mathfrak{X}^{\times})$,
		where $\mathfrak{X} = ( \bigoplus_{\gamma \in \Gamma} X_{\gamma} )_{\mathcal{E}}$, are $\oplus$-compatible.}
	\end{lemma}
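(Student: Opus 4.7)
The plan is to reduce the verification to checking the four conditions (C1)--(C4) of Definition~\ref{DEF: admissible topology} directly from the definition of $\sigma(\mathfrak{X},\mathcal{J})$ as the projective topology induced by the family of semi-norms $f\mapsto |\int fg\,dm|$ with $g\in\mathcal{J}$. In both cases (so for $\mathcal{J}=\Lambda_{\mathfrak{X}^{\times}}$ and $\mathcal{J}=\mathfrak{X}^{\times}$), (C1) is immediate: every semi-norm of the form $f\mapsto|\int fg\,dm|$ is dominated by $\|f\|\,\|g\|_{\mathfrak{X}^{\times}}$, so $\sigma(\mathfrak{X},\mathcal{J})$ is coarser than the norm topology. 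Hausdorffness follows from norm-fundamentality of $\mathcal{J}$ in $\mathfrak{X}^{*}$, which is built into the setup.

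For (C2) I would lift testing functionals from each factor to $\mathfrak{X}$. Given $g\in X_{\gamma}^{\times}$ (respectively $g\in\Lambda_{X_{\gamma}^{\times}}$), let $\widetilde{g}\coloneqq g\otimes\boldsymbol{e}_{\gamma}$, i.e.\ the function equal to $g$ on $\Omega_{\gamma}$ and zero elsewhere; this is patently in $\mathfrak{X}^{\times}$ (respectively $\Lambda_{\mathfrak{X}^{\times}}$, since a projected simple function remains simple and norms transfer coordinate-wise). The identity
\begin{equation*}
\int_{\Delta} f\,\widetilde{g}\,dm \;=\; \int_{\Omega_{\gamma}}\pi_{\gamma}(f)\,g\,d\mu_{\gamma}
\end{equation*}
shows that $\sigma(\mathfrak{X},\mathcal{J})$-convergence of $\{f_n\}$ forces $\sigma(X_{\gamma},\cdot)$-convergence of $\{\pi_{\gamma}(f_n)\}$, giving (C2). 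For (C3) the argument is dual: given $G\in\mathfrak{X}^{\times}$ (resp.\ $\Lambda_{\mathfrak{X}^{\times}}$), the restriction $\pi_{\gamma}G$ lies in $X_{\gamma}^{\times}$ (resp.\ $\Lambda_{X_{\gamma}^{\times}}$), and
\begin{equation*}
\int_{\Delta}(j_{\gamma}h)\,G\,dm \;=\; \int_{\Omega_{\gamma}} h\cdot\pi_{\gamma}G\,d\mu_{\gamma},
\end{equation*}
which immediately yields $\mathfrak{T}_{\gamma}$-to-$\mathfrak{T}$ sequential continuity of $j_{\gamma}$.

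Condition (C4) is the substantive step. Fix the unit vectors $\{x_{\gamma}\}_{\gamma\in\Gamma}$ underlying $j_{\mathcal{E}}$. Given $G\in\mathfrak{X}^{\times}$ (resp.\ $\Lambda_{\mathfrak{X}^{\times}}$), I would define $b\in\omega(\Gamma)$ by $b(\gamma)\coloneqq\int_{\Omega_{\gamma}} x_{\gamma}\,G_{\gamma}\,d\mu_{\gamma}$, so that
\begin{equation*}
\int_{\Delta} j_{\mathcal{E}}(a)\,G\,dm \;=\; \sum_{\gamma\in\Gamma} a(\gamma)\,b(\gamma) \qquad(a\in\mathcal{E}).
\end{equation*}
Using Hölder--Rogers exactly as in the discussion preceding Proposition~\ref{PROPOSITION : duality of direct sums} together with $|b(\gamma)|\leqslant\|G_{\gamma}\|_{X_{\gamma}^{\times}}$, the ideal property forces $b\in\mathcal{E}^{\times}$. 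Then $\sigma(\mathcal{E},\mathcal{E}^{\times})$-convergence of $\{a_n\}$ yields $\sum_{\gamma} a_n(\gamma)b(\gamma)\to\sum_{\gamma}a(\gamma)b(\gamma)$, which is exactly $\sigma(\mathfrak{X},\mathfrak{X}^{\times})$-convergence of $\{j_{\mathcal{E}}(a_n)\}$.

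The main obstacle is the $\Lambda$-case of (C4): one must verify $b\in\Lambda_{\mathcal{E}^{\times}}$ whenever $G\in\Lambda_{\mathfrak{X}^{\times}}$. The approach is an approximation argument: pick $G_k\to G$ in $\mathfrak{X}^{\times}$-norm with each $G_k$ simple, build the corresponding $b_k$, note that each $b_k$ is finitely supported, hence simple in $\mathcal{E}^{\times}$, and estimate
\begin{equation*}
\|b-b_k\|_{\mathcal{E}^{\times}} \;\leqslant\; \Big\|\sum_{\gamma\in\Gamma}\|G_{\gamma}-(G_k)_{\gamma}\|_{X_{\gamma}^{\times}}\boldsymbol{e}_{\gamma}\Big\|_{\mathcal{E}^{\times}} \;=\; \|G-G_k\|_{\mathfrak{X}^{\times}} \;\longrightarrow\; 0,
\end{equation*}
so that $b\in\Lambda_{\mathcal{E}^{\times}}$. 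An analogous (but easier) remark justifies $\pi_{\gamma}(\Lambda_{\mathfrak{X}^{\times}})\subseteq\Lambda_{X_{\gamma}^{\times}}$ and $\widetilde{(\,\cdot\,)}\colon\Lambda_{X_{\gamma}^{\times}}\to\Lambda_{\mathfrak{X}^{\times}}$, completing the proof.
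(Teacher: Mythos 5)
Your handling of the $\sigma(\mathfrak{X},\mathfrak{X}^{\times})$-case coincides with the paper's proof: the authors likewise dismiss (C1)--(C3) as obvious and devote the whole argument to (C4), where they form exactly your sequence $b(\gamma)=\int_{\Omega_{\gamma}}f^{(\gamma)}\varphi_{\gamma}\,d\mu_{\gamma}$, establish $b\in\mathcal{E}^{\times}$ through the same H\"older--Rogers/ideal-property estimate $\norm{b}_{\mathcal{E}^{\times}}\leqslant\norm{\varphi}_{\mathfrak{X}^{\times}}$, and conclude from $\sigma(\mathcal{E},\mathcal{E}^{\times})$-convergence of the scalar sequences. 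For that topology there is nothing to add.

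The place where you go beyond the paper --- which waves off the $\Lambda$-case with \enquote{the rest is quite similar} --- is also where your argument has a gap. You assert that if $G_k$ is a simple function on $\Delta=\bigsqcup_{\gamma\in\Gamma}\Omega_{\gamma}$, then the associated sequence $b_k$ is finitely supported. This fails in general: a set $A\subset\Delta$ of finite $m$-measure may meet every component (take $\mu_{\gamma_n}(A\cap\Omega_{\gamma_n})=2^{-n}$), so ${\bf 1}_A$ is simple while $b_k(\gamma)=\int_{A\cap\Omega_{\gamma}}x_{\gamma}\,d\mu_{\gamma}$ can be nonzero for infinitely many $\gamma$. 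Your norm estimate $\norm{b-b_k}_{\mathcal{E}^{\times}}\leqslant\norm{G-G_k}_{\mathfrak{X}^{\times}}$ is correct and does reduce the problem, by linearity and boundedness of $G\mapsto b_G$, to showing $b_G\in\Lambda_{\mathcal{E}^{\times}}$ for simple $G$; but that residual claim is not automatic. The natural bound $\abs{b_G(\gamma)}\leqslant\norm{G_{\gamma}}_{X_{\gamma}^{\times}}$ only returns $b_G\in\mathcal{E}^{\times}$, which you already had, and neither membership of $b_G$ in $\ell_1(\Gamma)$ nor convergence of its finitely supported truncations in $\mathcal{E}^{\times}$-norm is guaranteed without further hypotheses. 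So the step \enquote{finitely supported, hence simple in $\mathcal{E}^{\times}$} needs to be replaced by a genuine argument that $b_G$ lies in the closure of $c_{00}(\Gamma)$ in $\mathcal{E}^{\times}$; as it stands the $\Lambda$-half of the lemma is not proved.
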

	\begin{proof}
		We will show details only for the $\sigma(\mathfrak{X},\mathfrak{X}^{\times})$-topology.
		The rest is quite similar.
		
		Denote $\mathfrak{X} = ( \bigoplus_{\gamma \in \Gamma} X_{\gamma} )_{\mathcal{E}}$.
		Note also that the K{\" o}the dual of $\mathfrak{X}^{\times}$ of $\mathfrak{X}$ can be identified with
		$( \bigoplus_{\gamma \in \Gamma} X_{\gamma}^{\times} )_{\mathcal{E}^{\times}}$
		(see \cite[Proposition~4.a.5]{KT24} and \cite[pp.~175--178]{Lau01}; {\it cf}. Proposition~\ref{PROPOSITION : duality of direct sums}).
		Showing that conditions (C1), (C2) and (C3) from Definition~\ref{DEF: admissible topology} hold is rather obvious,
		so let us focus entirely on the last one, that is, (C4).
		Take a sequence $\{ x_n \}_{n=1}^{\infty}$ from $\mathcal{E}$ that converges in the $\sigma(\mathcal{E},\mathcal{E}^{\times})$-topology
		to some $x$ in $\mathcal{E}$, that is,
		\begin{equation} \label{EQ : seq weak conv}
			\sum_{\gamma \in \Gamma} x_n(\gamma) \psi(\gamma) \rightarrow \sum_{\gamma \in \Gamma} x(\gamma) \psi(\gamma)
		\end{equation}
		for each $\psi = \{ \psi(\gamma) \}_{\gamma \in \Gamma}$ from $\mathcal{E}^{\times}$ (see \eqref{EQ : WKDC1} and \eqref{EQ : WKDC2}).
		We will want to show that the mapping $j_{\mathcal{E}} \colon \mathcal{E} \rightarrow \mathfrak{X}$ is
		$\sigma(\mathcal{E},\mathcal{E}^{\times})$-to-$\sigma(\mathfrak{X},\mathfrak{X}^{\times})$ sequentially continuous.
		In other words, the proof will be complete as soon as we can show that
		\begin{equation*}
			y_n \coloneqq \sum_{\gamma \in \Gamma} x_n(\gamma) f^{(\gamma)} \otimes \boldsymbol{e}_{\gamma}
		\end{equation*}
		converges in the $\sigma(\mathfrak{X},\mathfrak{X}^{\times})$-topology to
		\begin{equation*}
			y \coloneqq \sum_{\gamma \in \Gamma} x(\gamma) f^{(\gamma)} \otimes \boldsymbol{e}_{\gamma}.
		\end{equation*}
		Here, $f^{(\gamma)} \in X_{\gamma}$ for $\gamma \in \Gamma$ with $\lVert f^{(\gamma)} \rVert_{X_{\gamma}} = 1$.
		To see this, take $\varphi = \{ \varphi_{\gamma} \}_{\gamma \in \Gamma}$ from $\mathfrak{X}^{\times}$.
		Observe that
		\begin{equation*}
			\int_{\Delta} y_n \varphi \, dm = \sum_{\gamma \in \Gamma} x_n(\gamma) \left( \int_{\Omega_{\gamma}} f^{(\gamma)} \varphi_{\gamma} \, d\mu_{\gamma} \right).
		\end{equation*}
		Moreover, since
		\begin{equation*}
			\norm{ \sum_{\gamma \in \Gamma} \left( \int_{\Omega_{\gamma}} f^{(\gamma)} \varphi_{\gamma} \, d\mu_{\gamma} \right) \boldsymbol{e}_{\gamma} }_{\mathcal{E}^{\times}}
			\leqslant \norm{ \sum_{\gamma \in \Gamma} \lVert f^{(\gamma)} \rVert_{X_{\gamma}} \lVert \varphi_{\gamma} \rVert_{X^{\times}_{\gamma}} \boldsymbol{e}_{\gamma} }_{\mathcal{E}^{\times}}
			\leqslant \norm{\varphi}_{\mathfrak{X}^{\times}},
		\end{equation*}
		so $\sum_{\gamma \in \Gamma} ( \int_{\Omega_{\gamma}} f^{(\gamma)} \varphi_{\gamma} \, d\mu_{\gamma} ) \boldsymbol{e}_{\gamma} \in \mathcal{E}^{\times}$.
		In consequence, using \eqref{EQ : seq weak conv}, we get
		\begin{equation*}
			\sum_{\gamma \in \Gamma} x_n(\gamma) \left( \int_{\Omega_{\gamma}} f^{(\gamma)} \varphi_{\gamma} \, d\mu_{\gamma} \right)
			\rightarrow
			\sum_{\gamma \in \Gamma} x(\gamma) \left( \int_{\Omega_{\gamma}} f^{(\gamma)} \varphi_{\gamma} \, d\mu_{\gamma} \right).
		\end{equation*}
		But this means that
		\begin{equation*}
			\int_{\Delta} y_n \varphi \, dm \rightarrow \int_{\Delta} y \varphi \, dm.
		\end{equation*}
		Due to $\varphi$'s arbitrariness the proof has been completed.
	\end{proof}
	
	\subsection{Mixed topologies} \label{SUBSECTION : Mixed topologies}
	
	Let $X$ be a vector space.
	Recall, following Cooper \cite[Definition~1.3, p.~5]{Coo78} that a {\bf convex bornology} (or simply {\bf bornology})
	$\mathcal{B}_X$ on $X$ is a family of balls in $X$, that is, an absolutely convex subsets of $X$ which does not contain
	a non-trivial subspace, so that:
	\begin{itemize}
		\item[(B1)] $\mathcal{B}_X$ covers $X$, that is, $X = \bigcup \{ B \colon B \in \mathcal{B}_X \}$;
		\item[(B2)] $\mathcal{B}_X$ is directed on the right by inclusions, that is, if $B, C \in \mathcal{B}_X$,
		there exists $D \in \mathcal{B}_X$ such that $B \cup C \subset D$;
		\item[(B3)] $\mathcal{B}_X$ is hereditary under inclusion, that is, if $B \in \mathcal{B}_X$ and $C$ is a ball
		contained in $B$, then $C \in \mathcal{B}_X$;
		\item[(B4)] $\mathcal{B}_X$ is stable under scalar multiplication, that is, if $B \in \mathcal{B}_X$ and $\lambda > 0$,
		then $\lambda B \in \mathcal{B}_X$.
	\end{itemize}
	Moreover, a {\bf basis} for a convex bornology $\mathcal{B}_X$ is a sub-family $\widetilde{\mathcal{B}_X}$ of $\mathcal{B}_X$
	so that for each $B \in \mathcal{B}_X$ there is $C \in \widetilde{\mathcal{B}_X}$ with $B \subset C$.
	We will say that the bornology $\mathcal{B}_X$ is of {\bf countable type} if $\mathcal{B}_X$ has a countable basis.
	
	\begin{example}[Von Neumann bornology]
		Let $(X,\mathfrak{T})$ be a locally convex topological vector space.
		It is routine to verify that the family of all $\mathfrak{T}$-bounded, absolutely convex subsets of $X$
		is a bornology on $X$.
		We will call this bornology the {\bf von Neumann bornology}.
		In particular, the von Neumann bornology of a normed space $(X, \lVert \cdot \rVert)$ is of countable type
		(to see this, just note that in this situation a basis has the form $\{ n \text{Ball}(X) \}_{n=1}^{\infty}$).
		\demo
	\end{example}
	
	\begin{construction}[Mixed topology] \label{CONSTRUCTION : mixed topology}
		Let $\mathfrak{T}$ and $\mathfrak{U}$ be two linear Hausdorff topologies defined on a vector space $X$.
		Suppose that the topology $\mathfrak{U}$ is coarser than $\mathfrak{T}$.
		According to Antoni Wiweger \cite{Wiw61}, the {\bf mixed topology} $\gamma[\mathfrak{T},\mathfrak{U}]$ on $X$
		is defined by the family of all sets of the form
		\begin{equation} \label{EQ : basis mixed topology}
			\gamma(U_1,U_2, ...; V) \coloneqq \bigcup_{n=1}^{\infty} ( U_1 \cap V + U_2 \cap 2V + ... + U_n \cap nV ),
		\end{equation}
		where $V$ and all $U_n$'s with $n \in \mathbb{N}$ are sets from bases of neighbourhoods for $0$ in topologies
		$\mathfrak{T}$ and, respectively, $\mathfrak{U}$.
		In fact, it is straightforward to see that the family of sets \eqref{EQ : basis mixed topology} satisfies
		the conditions required for a basis of neighbourhoods for $0$ ({\it cf}. \cite[pp.~49--50]{Wiw61}).
		Thus, the standard topological toolkit guarantees the existence of a unique linear topology determined by this basis.
		\demo
	\end{construction}

	With Construction~\ref{CONSTRUCTION : mixed topology} at our disposal, it is not difficult to check that:
	\begin{itemize}
		\item[(W1)] $\gamma[\mathfrak{T},\mathfrak{U}]$ is a linear Hausdorff topology on $X$;
		\item[(W2)] $\gamma[\mathfrak{T},\mathfrak{U}]$ is coarser than $\mathfrak{T}$ and finer than $\mathfrak{U}$;
		\item[(W3)] $\gamma[\mathfrak{T},\mathfrak{U}]$ coincide with $\mathfrak{U}$ on $\mathfrak{T}$-bounded subsets of $X$.
	\end{itemize}
	Actually, under the additional assumption that all basis neighbourhoods of the topology $\mathfrak{T}$ are $\mathfrak{T}$-bounded,
	the above condition (W3) provides a characterization of the mixed topology $\gamma[\mathfrak{T},\mathfrak{U}]$ (see \cite[2.2.2]{Wiw61});
	more precisely,
	\begin{itemize}
		\item[(W3*)] $\gamma[\mathfrak{T},\mathfrak{U}]$ is the finest of all linear topologies on $X$
		that coincide with $\mathfrak{U}$ on $\mathfrak{T}$-bounded subsets of $X$.
	\end{itemize}

	A few comments seem in order.
	
	\begin{remark}[On Cooper's construction]
		Slightly less general construction of mixed topologies was proposed by Cooper (see \cite[Chapter~I]{Coo78}).
		Roughly speaking, the difference is that the topology $\mathfrak{T}$ in Construction~\ref{CONSTRUCTION : mixed topology}
		is replaced by a bornology $\mathcal{B}_X$ on $X$.
		It is worth mentioning that this construction naturally generalize the class of (DF)-spaces introduced by Grothendieck
		(see \cite[Remark~1.27, p.~19]{Coo78} and references therein).
		Probably the most natural mixed topology on $X$ is $\gamma(\lVert \cdot \rVert, \mathfrak{T})$, where $\lVert \cdot \rVert$
		is the von Neumann bornology of a normed space $(X, \lVert \cdot \rVert)$ (see \cite[Section~1.4]{Coo78}).
		\demo
	\end{remark}

	From our perspective, however, the most important observation about the mixed topologies can be summarized in the following
	
	\begin{lemma}[$\oplus$-compatible mixed topologies]
		{\it Let $\mathfrak{T}$ and $\mathfrak{U}$ be two $\oplus$-compatible topologies on $( \bigoplus_{\gamma \in \Gamma} X_{\gamma} )_{\mathcal{E}}$.
		Suppose that the topology $\mathfrak{U}$ is coarser than $\mathfrak{T}$.
		Then the mixed topology $\gamma[\mathfrak{T},\mathfrak{U}]$ is also $\oplus$-compatible.}
	\end{lemma}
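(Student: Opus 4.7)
The plan is to verify the conditions (C1)--(C4) from Definition~\ref{DEF: admissible topology} for $\gamma[\mathfrak{T},\mathfrak{U}]$, with the natural choice of component topologies: $\gamma[\mathfrak{T}_{\gamma},\mathfrak{U}_{\gamma}]$ on each $X_{\gamma}$ and $\gamma[\mathfrak{T}_{\mathcal{E}},\mathfrak{U}_{\mathcal{E}}]$ on $\mathcal{E}$, where $\{\mathfrak{T}_{\gamma}\}_{\gamma\in\Gamma},\mathfrak{T}_{\mathcal{E}}$ and $\{\mathfrak{U}_{\gamma}\}_{\gamma\in\Gamma},\mathfrak{U}_{\mathcal{E}}$ are the component topologies witnessing the $\oplus$-compatibility of $\mathfrak{T}$ and $\mathfrak{U}$, respectively. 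One first notes that $\mathfrak{U}_{\gamma}$ is coarser than $\mathfrak{T}_{\gamma}$ (and similarly on $\mathcal{E}$): the identity $\pi_{\gamma}\circ j_{\gamma}=\mathrm{id}_{X_{\gamma}}$, combined with the $\oplus$-compatibility of both $\mathfrak{T}$ and $\mathfrak{U}$ and the fact that $\mathfrak{U}$ is coarser than $\mathfrak{T}$, forces every $\mathfrak{T}_{\gamma}$-convergent sequence to be $\mathfrak{U}_{\gamma}$-convergent to the same limit, which is what we need to make the component mixed topologies meaningful in our context. Condition (C1) itself is immediate from (W1) and (W2): the topology $\gamma[\mathfrak{T},\mathfrak{U}]$ is linear Hausdorff and coarser than $\mathfrak{T}$, and $\mathfrak{T}$ is already coarser than the norm topology by the $\oplus$-compatibility of $\mathfrak{T}$.

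For (C2), (C3) and (C4) I would run one single, uniform argument. Let $f\colon Y_1\to Y_2$ denote any of the three maps $\pi_{\gamma}$, $j_{\gamma}$ or $j_{\mathcal{E}}$, and write $\mathfrak{T}_1,\mathfrak{U}_1$ (respectively $\mathfrak{T}_2,\mathfrak{U}_2$) for the corresponding source and target component topologies. By the $\oplus$-compatibility hypotheses, $f$ is both $\mathfrak{T}_1$-to-$\mathfrak{T}_2$ and $\mathfrak{U}_1$-to-$\mathfrak{U}_2$ sequentially continuous. Given $y_n\to y$ in $\gamma[\mathfrak{T}_1,\mathfrak{U}_1]$, property (W2) yields $y_n\to y$ in $\mathfrak{U}_1$, and hence $f(y_n)\to f(y)$ in $\mathfrak{U}_2$. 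The sequence $\{y_n-y\}_{n=1}^{\infty}$ is $\gamma[\mathfrak{T}_1,\mathfrak{U}_1]$-bounded, and therefore also $\mathfrak{T}_1$-bounded (the standard identification of bounded sets for Wiweger mixed topologies; see \cite[Section~2]{Wiw61} and \cite[Section~I.1]{Coo78}). Since a sequentially continuous linear map between topological vector spaces carries bounded sequences to bounded sequences, $\{f(y_n)-f(y)\}_{n=1}^{\infty}$ is $\mathfrak{T}_2$-bounded. Invoking (W3) on this $\mathfrak{T}_2$-bounded set, the topology $\gamma[\mathfrak{T}_2,\mathfrak{U}_2]$ agrees there with $\mathfrak{U}_2$, whence $f(y_n)\to f(y)$ in $\gamma[\mathfrak{T}_2,\mathfrak{U}_2]$, as required.

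The main obstacle I anticipate is the identification of $\gamma[\mathfrak{T}_1,\mathfrak{U}_1]$-bounded (in particular, convergent) sequences as $\mathfrak{T}_1$-bounded, which carries the technical weight of the argument. This identification is clean within the Wiweger framework whenever $\mathfrak{T}_1$ admits a fundamental system of $\mathfrak{T}_1$-bounded neighborhoods of zero; in the absence of such structural assumptions one must work directly with the basis \eqref{EQ : basis mixed topology}, lifting a target-side basis neighborhood $\gamma(U_1',U_2',\ldots;V')$ to a source-side one $\gamma(U_1,U_2,\ldots;V)$ via the sequential continuity of $f$ at zero. Everything else is essentially bookkeeping, and the three conditions (C2), (C3), (C4) follow the same template.
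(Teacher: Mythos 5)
Your reading of the statement --- equipping the components with the mixed topologies $\gamma[\mathfrak{T}_{\gamma},\mathfrak{U}_{\gamma}]$ and $\gamma[\mathfrak{T}_{\mathcal{E}},\mathfrak{U}_{\mathcal{E}}]$ --- is the sensible one, and your route is necessarily different from the paper's, because the paper's entire proof is the sentence \enquote{With (W1) and (W2) in mind, just look at Definition~\ref{DEF: admissible topology}.} That one-liner really only settles (C1): for (C2)--(C4), knowing that a $\gamma[\mathfrak{T},\mathfrak{U}]$-convergent sequence is $\mathfrak{U}$-convergent gives $\mathfrak{U}_2$-convergence of the images, but not convergence in the \emph{finer} mixed topology of the target, so a two-step argument of the kind you run (get $\mathfrak{U}_2$-convergence, then upgrade via boundedness and (W3)) is genuinely needed. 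Your uniform treatment of $\pi_{\gamma}$, $j_{\gamma}$, $j_{\mathcal{E}}$ is the right organization, and the ancillary steps are sound: convergent sequences are bounded, sequentially continuous linear maps do carry bounded sequences to bounded sequences (via the scalar-multiple characterization of boundedness), and (W3) is applied correctly to the bounded set $\{f(y_n)-f(y)\}\cup\{0\}$.

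The gap is exactly the step you flag and do not close: that a $\gamma[\mathfrak{T}_1,\mathfrak{U}_1]$-bounded set is $\mathfrak{T}_1$-bounded. In \cite{Wiw61} this coincidence of bounded families is established only under the additional hypothesis that $\mathfrak{T}_1$ admits a basis of $\mathfrak{T}_1$-bounded neighbourhoods of zero --- the same hypothesis the paper invokes for (W3*) and which is \emph{not} among the lemma's assumptions (an $\oplus$-compatible $\mathfrak{T}$ may well be, say, $\sigma(\mathfrak{X},\mathfrak{X}^{\times})$, which has no bounded neighbourhoods). Your proposed fallback --- lifting a target basis neighbourhood $\gamma(U_1',U_2',\ldots;V')$ from \eqref{EQ : basis mixed topology} to a source one --- would work if $f$ were genuinely continuous for the component topologies (take $f^{-1}(U_k')$ and $f^{-1}(V')$ and use linearity), but Definition~\ref{DEF: admissible topology} grants only \emph{sequential} continuity, which does not let you pull back neighbourhoods. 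A second, smaller issue of the same flavour: your verification that $\mathfrak{U}_{\gamma}$ is coarser than $\mathfrak{T}_{\gamma}$ via $\pi_{\gamma}\circ j_{\gamma}=\mathrm{id}$ only yields sequential coarseness, whereas Construction~\ref{CONSTRUCTION : mixed topology} requires actual containment of the topologies to form $\gamma[\mathfrak{T}_{\gamma},\mathfrak{U}_{\gamma}]$. So as written your proof is complete only under a standing local-boundedness (or genuine continuity) assumption on the finer topologies --- automatic in the Saks-space case $\gamma[\lVert\cdot\rVert,\mathfrak{T}]$, which is the case of real interest here, but not in the generality the lemma claims.
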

	\begin{proof}
		With (W1) and (W2) in mind, just look at Definition~\ref{DEF: admissible topology}.
	\end{proof}

	For some examples of mixed topologies we refer to \cite[pp.~20--25]{Coo78} and \cite[pp.~64--67]{Wiw61}.
	
	\section{{\bf Classical Kadets-Klee properties}} \label{SECTION : EXAMPLES}
	
	In this section we will focus on the two undoubtedly most natural Kadets--Klee properties, that is to say, the properties
	${\bf H}(\textit{weak})$ and ${\bf H}(\textit{point-wise})$ (see Section~\ref{SUBSECTION : H(weak)} and Section~\ref{SECTION : local convergence in measure}, respectively).
	We will also take this opportunity to show how earlier, usually, only partial, results about the Kadets--Klee properties
	in direct sums or, in particular, K{\" o}the--Bochner sequence spaces, follows from our abstract considerations.
	
	\subsection{Weak topology} \label{SUBSECTION : H(weak)}
	
	Remembering about the historical origins of the Kadets--Klee property, let us first give due attention to weak topologies.

	\begin{theorem}[The property ${\bf H}(\textit{weak})$] \label{THM : H weak}
		{\it Let $\{ X_{\gamma} \}_{\gamma \in \Gamma}$ be a sequence of Banach spaces.
		Further, let $\mathcal{E}$ be a Banach sequence space with the property ${\bf H}(\textit{point-wise})$.
		Suppose that the space $\mathcal{E}$ is monotone complete (see Remark \ref{REMARK : o warunkach w twierdzeniu}).
		Then the space $( \bigoplus_{\gamma \in \Gamma} X_{\gamma} )_{\mathcal{E}}$ has the property ${\bf H}(\textit{weak})$
		if, and only if,}
		\begin{itemize}
			\item[(1)] {\it all the spaces $X_{\gamma}$ have the property ${\bf H}(\textit{weak})$;}
			\item[(2)] {\it the set $\Gamma$ can be decomposed into two disjoint subsets, say $\Gamma_1$ and $\Gamma_2$,
			in such a way that all $X_{\gamma}$'s with $\gamma \in \Gamma_1$ have the Schur property and $\mathcal{E}$ is ${\bf SM}(\gamma)$ for $\gamma \in \Gamma_2$.}
		\end{itemize}
	\end{theorem}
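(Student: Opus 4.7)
The plan is to deduce both directions of the equivalence directly from the general machinery of Section~\ref{SUBSECTION : abstract framework}, specialised to the setting $\mathfrak{T}_\gamma = \textit{weak}$ on each $X_\gamma$, $\mathfrak{T}_{\mathcal{E}} = \textit{weak}$ on $\mathcal{E}$, and $\mathfrak{T} = \textit{weak}$ on $(\bigoplus_{\gamma \in \Gamma} X_{\gamma})_{\mathcal{E}}$. The preliminary point is that this choice yields a $\oplus$-compatible topology in the sense of Definition~\ref{DEF: admissible topology}: the projections $\pi_\gamma$ and the embeddings $j_\gamma$, $j_{\mathcal{E}}$ are all bounded linear operators and hence automatically weak-to-weak sequentially continuous, which verifies (C1)--(C4) with no effort. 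The norm topology is also finer than the weak topology, so (C1) is in place.

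The necessity direction is then nothing more than a direct application of Theorem~\ref{THM: H is hereditary}: its clause~(1) delivers $\mathbf{H}(\textit{weak})$ for every $X_\gamma$, while clause~(2) delivers exactly the decomposition $\Gamma = \Gamma_1 \sqcup \Gamma_2$ demanded by item~(2) of the present statement, the Schur property with respect to $\mathfrak{T}_\gamma = \textit{weak}$ coinciding with the classical Schur property.

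For the sufficiency direction, the plan is to apply Theorem~\ref{THM: main theorem}. Assumption (A1) of that theorem is the pre-Lebesgue character of the weak topology on $\mathcal{E}$, which is supplied by Example~\ref{EXAMPLE : OC(tau) => OC}. Assumption (A2) is the $\oplus$-compatibility already established. Among the hypotheses of Theorem~\ref{THM: main theorem}, I also need $\mathcal{E}$ to have the property $\mathbf{H}(\textit{weak})$, but this follows for free from the standing assumption $\mathbf{H}(\textit{point-wise})$ together with Remark~\ref{REMARK : minimal topologies}. The main obstacle is assumption (A3), namely the $\mathfrak{T}$-to-$\mathfrak{T}_{\mathcal{E}}$ sequential continuity of $\lfloor \bullet \rceil$ on the unit sphere; this is exactly the point at which Theorem~\ref{THM : oba warunki sa rownowazne} (the Compatibility result) is meant to do the heavy lifting, converting assumption~(2) of the present statement into (A3).

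To run Theorem~\ref{THM : oba warunki sa rownowazne} I must in turn verify its own four assumptions. Its (A1) says that $\text{Ball}(\mathcal{E})$ is point-wise sequentially closed, which by Remark~\ref{REMARK : o warunkach w twierdzeniu}(A1) is equivalent to the Fatou property of $\mathcal{E}$; this is the conjunction of the weak Fatou property (that is, sequential lower semi-continuity of the norm w.r.t.\ point-wise convergence, given for free by Lemma~\ref{LEMMA: H tau => lower semicontinuous} applied to $\mathbf{H}(\textit{point-wise})$) and monotone completeness (explicitly assumed). Its (A2) is the $\oplus$-compatibility already noted. Its (A3) says that on positive unit-sphere sequences in $\mathcal{E}$ the weak and the point-wise topologies coincide: weak convergence implies point-wise convergence by continuity of the coordinate functionals, and conversely point-wise convergence on the unit sphere of $\mathcal{E}$ implies norm convergence by $\mathbf{H}(\textit{point-wise})$, hence weak convergence. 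Its (A4) is the weak sequential lower semi-continuity of the norm on each $X_\gamma$, a classical Hahn--Banach consequence. With all four conditions in hand, Theorem~\ref{THM : oba warunki sa rownowazne} converts item~(2) of the present statement into assumption (A3) of Theorem~\ref{THM: main theorem}, and the latter then produces $\mathbf{H}(\textit{weak})$ for $(\bigoplus_{\gamma \in \Gamma} X_{\gamma})_{\mathcal{E}}$, finishing the proof.
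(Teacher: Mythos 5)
Your proposal is correct and follows essentially the same route as the paper: establish $\oplus$-compatibility of the weak topology via norm-to-norm (hence weak-to-weak) continuity of $\pi_\gamma$, $j_\gamma$, $j_{\mathcal{E}}$, obtain necessity from Theorem~\ref{THM: H is hereditary}, and obtain sufficiency by verifying (A1)--(A4) of Theorem~\ref{THM : oba warunki sa rownowazne} (using $\mathbf{H}(\textit{point-wise})$ plus monotone completeness for the Fatou/closed-ball condition and for the agreement of the weak and point-wise topologies on the unit sphere) together with the pre-Lebesgue property of the weak topology from Example~\ref{EXAMPLE : OC(tau) => OC}, and then invoking Theorem~\ref{THM: main theorem}. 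The only difference is that you spell out a couple of steps the paper leaves implicit (e.g.\ that $\mathbf{H}(\textit{point-wise})$ yields $\mathbf{H}(\textit{weak})$ for $\mathcal{E}$ via Remark~\ref{REMARK : minimal topologies}), which is harmless.
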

	\begin{proof}
		All that needs to be done is to check the assumptions (A1), (A2), (A3) and (A4) of Theorem~\ref{THM : oba warunki sa rownowazne}
		and call Theorems~\ref{THM: H is hereditary} and \ref{THM: main theorem} on stage.
		However, since this requires a bit of patience, let us take a moment to explain this in more detail.
  
        $\bigstar$
		Plainly, the weak topology is a linear Hausdorff topology coarser than the norm topology.
  
        $\bigstar$
		Next, we need to show that the weak topology on the direct sum $( \bigoplus_{\gamma \in \Gamma} X_{\gamma} )_{\mathcal{E}}$ is $\oplus$-compatible with the weak topologies on their components. It is crystal clear that all three mappings from the Definition~\ref{DEF: admissible topology} are norm-to-norm continuous. Note also that an operator acting between Banach spaces is norm-to-norm continuous if, and only if,
		it is weak-to-weak continuous (see, for example, \cite[Theorem~2.5.11, p.~214]{Meg98}; {\it cf}. \cite[p.~344]{AK06}). The conclusion is clear.

        Thus the necessity follows from Theorem~\ref{THM: H is hereditary}.
        
        Now, we will prove the sufficiency.
        First of all we will check the assumptions (A1)-(A4) from of Theorem~\ref{THM : oba warunki sa rownowazne}.

        {\bf (A1)}
		Since  the space $\mathcal{E}$ has the property ${\bf H}(\textit{point-wise})$, by Lemma \ref{LEMMA: H tau => lower semicontinuous},
		$\mathcal{E}$ has the weak Fatou property.
		To conclude that the unit ball $\text{Ball}(\mathcal{E})$ is sequentially closed in the point-wise topology it is enough
		to take a look at (A1) in Remark~\ref{REMARK : o warunkach w twierdzeniu}.

       	{\bf (A2)} The assumption (A2) has been already checked above.
		
		{\bf (A3)}
		Since we assumed that the space $\mathcal{E}$ has the property ${\bf H}(\textit{point-wise})$,
		so it is clear that for sequences on the unit sphere in $\mathcal{E}$ the weak topology coincide with the point-wise topology.
		
		{\bf (A4)}  
		Finally, it is well-known that the norm function $x \mapsto \norm{x}$ is always sequentially lower-semicontinuous with respect to 	the weak topology.
		(Let us note, by the way, that since the property ${\bf H}(\textit{point-wise})$ imply ${\bf H}(\textit{weak})$,
		so this can also be deduced directly from Lemma~\ref{LEMMA: H tau => lower semicontinuous} (see also \cite[Lemma~3.1, p.~27]{Coo78}).)
       
        $\bigstar$
        Finally, the assumption (A1) from Theorem~\ref{THM: main theorem} follows from Example~\ref{EXAMPLE : OC(tau) => OC}.
        
        In consequence, the sufficiency follows from Theorems~\ref{THM: main theorem} and \ref{THM : oba warunki sa rownowazne}.
	\end{proof}

    \begin{remark}[About assumptions in Theorem~\ref{THM : H weak}]
    	The above theorem remains true if we replace the assumption that $\mathcal{E}$ has the property ${\bf H}(\textit{point-wise})$
    	by the formally more general condition (A3) from Theorem~\ref{THM : oba warunki sa rownowazne} which, let us recall, reads as:
    	\begin{quote}
    		\textit{For sequences on the unit sphere in $\mathcal{E}$ the weak topology coincide with the point-wise topology.}
    	\end{quote}
    	One, so to speak, disadvantage of this condition is that it is rather difficult to check in specific situations without appealing
    	to some other, perhaps more natural, properties of Banach spaces.
    	We already had an illustration of this situation above, where we used the property ${\bf H}(\textit{point-wise})$.
    	(A side note is that this choice seems to be quite optimal, especially when compared with the results of other authors;
    	{\it cf}. Corollary~\ref{COR: Krasowska i Pluciennik}.)
          
        Either way, there are other options.
        One of them is reflexivity.
        Note that reflexivity and the property ${\bf H}(\textit{point-wise})$ are not comparable in general
        (to see this, it is enough to consider Orlicz sequence spaces; see Table~\ref{TABLE : H} for details).
        Thus, if we replace the assumption that \enquote{the space $\mathcal{E}$ has the property ${\bf H}(\textit{point-wise})$}
        by \enquote{the space $\mathcal{E}$ is reflexive}, we obtain a little different variant of Theorem~\ref{THM : H weak}.
        Furthermore, in that case, the property ${\bf H}(\textit{point-wise})$ will be even necessary.
        \demo
    \end{remark}
    
    \begin{remark}[About Theorem~\ref{THM : H weak}]
    	What seems somehow unusual is that in the \enquote{proof} of Theorem~\ref{THM : H weak} we did not need to explicitly use the description
    	of the dual space to $( \bigoplus_{\gamma \in \Gamma} X_{\gamma} )_{\mathcal{E}}$!
    	Such a description is known, but only under some additional assumptions on $\mathcal{E}$
    	(see \cite[Proposition~4.8]{Lau01} and Proposition~\ref{PROPOSITION : duality of direct sums}; {\it cf}. \cite{DK16}).
    	It should also be mentioned that under the assumption that the space $\mathcal{E}$ is finite-dimensional the above
    	result was proved by Dowling, Photi and Saejung in \cite[Theorem 3.1]{DPS07}.
    	(Of course, this situation is particularly easy and reduces the set of additional assumptions imposed on the space $\mathcal{E}$
    	in Theorem~\ref{THM : H weak} to $\emptyset$.)
    	\demo
    \end{remark}
            
	Several immediate consequences of Theorem~\ref{THM : H weak} seem worth noting.
	First, following van Dulst and de Valk \cite[Proposition~2]{DV86}, let us say a few words about the Kadets--Klee property
	in the $\ell_F$-direct sums of Banach spaces.

	\begin{corollary}(D. van Dulst and V. de Valk, 1986)
		{\it Let $\{ X_{n} \}_{n=1}^{\infty}$ be a family of Banach spaces with the property ${\bf H}(\text{weak})$.
		Further, let $F$ be an Orlicz function satisfying the $\delta_2$-condition.
		Then the space $( \bigoplus_{n=1}^{\infty} X_{n} )_{\ell_F}$ has the property ${\bf H}(\textit{weak})$.}
	\end{corollary}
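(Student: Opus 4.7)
The plan is to obtain the corollary as a direct specialization of Theorem~\ref{THM : H weak}, with $\mathcal{E} = \ell_F$. This reduces everything to verifying a short checklist of properties of $\ell_F$, after which the two conditions (1) and (2) of Theorem~\ref{THM : H weak} collapse to the hypothesis that each $X_n$ has the property $\mathbf{H}(\textit{weak})$.

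First I would collect the relevant structural properties of $\ell_F$. Since $F$ satisfies the $\delta_2$-condition, the space $\ell_F$ is separable, equivalently order continuous, so in particular $a_F = 0$ and $b_F = \infty$. Orlicz sequence spaces always possess the Fatou property (they are Luxemburg--Nakano spaces built from a Young modular), so $\ell_F$ is monotone complete. Moreover, the fact $a_F = 0$ implies, by a standard argument about Young functions, that the modular $\rho_F(x) = \sum_n F(|x(n)|)$ is strictly monotone in each coordinate; hence $\ell_F$ is $\mathbf{SM}$, and therefore $\ell_F$ is $\mathbf{SM}(n)$ for every $n \in \mathbb{N}$.

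Next I would verify that $\ell_F$ has the property $\mathbf{H}(\textit{point-wise})$. This is the classical Kadets--Klee property for $\ell_F$ under $\delta_2$: if $x_k \to x$ pointwise with $\|x_k\|_{\ell_F} \to \|x\|_{\ell_F}$, then the $\delta_2$-condition makes the modular norm-equivalent to the Luxemburg norm in a quantitative sense, Fatou gives $\rho_F(x) \leqslant \liminf_k \rho_F(x_k)$, and the convergence of norms together with the reverse inequality (from pointwise convergence on finite initial segments and uniform control of the tails via the $\delta_2$-condition) yields $\rho_F(x - x_k) \to 0$, hence $\|x - x_k\|_{\ell_F} \to 0$. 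This is explicitly listed in Table~\ref{TABLE : H} and is part of the folklore of Orlicz spaces.

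Now I can conclude. All hypotheses of Theorem~\ref{THM : H weak} are in place: $\mathcal{E} = \ell_F$ has $\mathbf{H}(\textit{point-wise})$ and is monotone complete, each $X_n$ has $\mathbf{H}(\textit{weak})$, and taking the decomposition $\Gamma_1 = \emptyset$ and $\Gamma_2 = \mathbb{N}$ condition (2) holds trivially because $\ell_F$ is $\mathbf{SM}(n)$ for every $n$. Theorem~\ref{THM : H weak} then yields that $(\bigoplus_{n=1}^{\infty} X_n)_{\ell_F}$ has the property $\mathbf{H}(\textit{weak})$. The only step that involves a genuine piece of analysis rather than bookkeeping is the verification of $\mathbf{H}(\textit{point-wise})$ for $\ell_F$; this is where the $\delta_2$-condition is essentially used, since without it the tails of $\rho_F$ need not be controlled by the convergence of norms.
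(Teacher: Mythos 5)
Your proposal is correct and follows essentially the same route as the paper: the corollary is presented there as an immediate consequence of Theorem~\ref{THM : H weak}, and the proof the paper gives of the stronger corollary that follows runs through exactly your checklist (Fatou property/monotone completeness, $\mathbf{SM}$, and $\mathbf{H}(\textit{point-wise})$ for $\ell_F$ under the $\delta_2$-condition, then the decomposition $\Gamma_1=\emptyset$, $\Gamma_2=\mathbb{N}$). One small imprecision: strict monotonicity of $\ell_F$ does not follow from $a_F=0$ alone --- passing from the strictly monotone modular to the Luxemburg norm also uses the $\delta_2$-condition (the paper cites \cite[Theorem~1]{CHM95}) --- but since $\delta_2$ is assumed throughout this does not affect your argument.
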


	Actually, we can deduce much stronger result.

    \begin{corollary}
        {\it Let $\{ X_{n} \}_{n=1}^{\infty}$ be a family of Banach spaces and let $F$ be the Young function.}
    	\begin{itemize}
    		\item[(1)] {\it Suppose that $\ell_F \neq \ell_1$.
    		Then the space $( \bigoplus_{n=1}^{\infty} X_{n} )_{\ell_F}$ has the property ${\bf H}(\text{weak})$ if, and only if,  
    		all the spaces $X_{n}$ have the property ${\bf H}(\text{weak})$,
    		the Orlicz function $F$ satisfies the $\delta_2$-condition for small arguments and $F(b_{F}) \geqslant 1$.}
    		\item[(2)] {\it Suppose that $\ell_F = \ell_1$.
    		Then the space $( \bigoplus_{n=1}^{\infty} X_{n} )_{\ell_F}$ has the property ${\bf H}(\text{weak})$ if, and only if, all the spaces $X_{n}$ have the property ${\bf H}(\text{weak})$.}
    	\end{itemize}
    \end{corollary}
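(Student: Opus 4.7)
The strategy is to specialize Theorem \ref{THM : H weak} to the case $\mathcal{E} = \ell_F$. This reduces the task to two sub-questions: (i) when does $\ell_F$ satisfy the standing hypotheses of that theorem, namely $\mathbf{H}(\textit{point-wise})$ together with monotone completeness; and (ii) when does $\ell_F$ fulfill the decomposition clause of condition (2) of Theorem \ref{THM : H weak} with $\Gamma_1 = \emptyset$, that is, when is $\ell_F$ strictly monotone at every coordinate (which is exactly what allows us to avoid requiring any Schur property on the $X_n$'s). Since the Luxemburg--Nakano norm always possesses the Fatou property, $\ell_F$ is automatically monotone complete, so the remaining two properties need only be translated into conditions on the Young function $F$.

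For the sufficiency in case (2), the space $\ell_1$ is separable, uniformly monotone (hence $\mathbf{SM}(n)$ for every $n$), and possesses $\mathbf{H}(\textit{point-wise})$ by a dominated-convergence argument; thus Theorem \ref{THM : H weak} applies with $\Gamma_2 = \mathbb{N}$ and yields the equivalence of $\mathbf{H}(\textit{weak})$ on the direct sum with $\mathbf{H}(\textit{weak})$ on each $X_n$. In case (1), I invoke the classical characterization: the $\delta_2$-condition for small arguments is precisely order continuity of $\ell_F$ by \cite[Proposition~4.a.4]{LT77}, which entails $a_F = 0$ and therefore strict monotonicity on every coordinate; the supplementary condition $F(b_F) \geqslant 1$ rules out the pathology in which the Luxemburg modular cannot reach $1$ (in which case the norm coincides with a multiple of the $\ell_\infty$-norm on a complemented piece, destroying both $\mathbf{SM}$ and $\mathbf{H}(\textit{point-wise})$). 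Under these assumptions, Theorem \ref{THM : H weak} applied with $\Gamma_2 = \mathbb{N}$ delivers the conclusion.

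For the necessity, the hereditary part (Theorem \ref{THM: H is hereditary}) transfers $\mathbf{H}(\textit{weak})$ to each $X_n$ and, via the complemented embedding $j_{\mathcal{E}} \colon \ell_F \hookrightarrow (\bigoplus_{n=1}^{\infty} X_n)_{\ell_F}$, also to $\ell_F$ itself. In case (1), where $\ell_F \neq \ell_1$, this forces $F \in \delta_2$ for small arguments, because a non-order-continuous Banach sequence space contains a disjointly supported sequence isomorphic to the unit vector basis of $c_0$ or $\ell_\infty$, preventing $\mathbf{H}(\textit{weak})$. The remaining condition $F(b_F) \geqslant 1$ is extracted by the following observation: if $F(b_F) < 1$ and $b_F < \infty$, then $\ell_F$ fails $\mathbf{SM}(n_0)$ for some (in fact every) $n_0$, and the construction in the third part of the proof of Theorem \ref{THM: H is hereditary} produces a sphere-sequence $\{y_n\}$ in the direct sum converging in the weak topology but not in norm, contradicting $\mathbf{H}(\textit{weak})$ (here we use that the infinite-dimensional spaces $X_n$ can be chosen so the construction is not blocked by Schur, or, alternatively, that the failure of $\mathbf{SM}$ propagates directly into the norm one element of $\ell_F$ itself).

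The main obstacle is the last step, namely the isolation of the condition $F(b_F) \geqslant 1$ from the Kadets--Klee behavior of $\ell_F$ and its lifting to the direct sum. The delicate issue here is distinguishing the two regimes $F(b_F) \geqslant 1$ and $F(b_F) < 1$ through the geometry of the Luxemburg unit ball at the jump point $b_F$, and then translating this through the splitting $\Gamma = \Gamma_1 \sqcup \Gamma_2$ provided by Theorem \ref{THM: H is hereditary}, while ensuring that the case $\ell_F = \ell_1$ is cleanly separated since there $b_F = \infty$ makes the extra condition vacuous.
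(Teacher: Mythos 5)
Your overall strategy --- specializing Theorem~\ref{THM : H weak} with $\Gamma_2 = \mathbb{N}$ and using Theorem~\ref{THM: H is hereditary} for the necessity --- is the paper's strategy, and your treatment of case (2) and of the sufficiency in case (1) is essentially correct (modulo the slip that $a_F = 0$ alone does not give strict monotonicity; one also needs $F(b_F) \geqslant 1$, which you do eventually invoke). The genuine gap is exactly where you locate it: the necessity of $F(b_F) \geqslant 1$ in case (1). Both routes you sketch fail. The construction in the third part of the proof of Theorem~\ref{THM: H is hereditary} needs some $X_{n_0}$ failing the Schur property; the family $\{X_n\}$ is given, not chosen, and if every $X_n$ is a Schur space that construction is blocked, so the failure of ${\bf SM}(n_0)$ for $\ell_F$ gives you nothing. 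Your fallback, that the failure of ${\bf SM}$ \enquote{propagates directly} into a failure of ${\bf H}(\textit{weak})$ for $\ell_F$, is false as a general principle: a non-strictly-monotone renorming of $\ell_1$ is Schur and hence has ${\bf H}(\textit{weak})$ --- indeed the paper's Example~\ref{H_c silniejsze niz H} (where $F(t)=t$ for $t \leqslant 1/2$ and $F(t)=\infty$ for $t>1/2$, so $F(b_F)<1$) is precisely such a space, which is why the corollary must split off the case $\ell_F = \ell_1$.

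What is missing is the input the paper supplies as Theorem~\ref{THEOREM : H in Orlicz sequence}: when $\ell_F \neq \ell_1$ and $F(b_F) < 1$, the space $\ell_F$ \emph{itself} fails ${\bf H}(\textit{weak})$. The point is that $\ell_F \neq \ell_1$ forces $\ell_F \not\hookrightarrow \ell_1$, hence $(\ell_F)^{\times} \hookrightarrow c_0$; then, choosing $\eta>0$ with $F(b_F)+F(\eta)\leqslant 1$, the vectors $x = b_F\boldsymbol{e}_1$ and $x_n = x + \eta\boldsymbol{e}_n$ all lie on the unit sphere, $x_n \rightarrow x$ weakly (every functional splits into a regular part lying in $(\ell_F)^{\times}\subset c_0$ plus a singular part annihilating $\eta\boldsymbol{e}_n$), yet $\norm{x-x_n} = \eta\norm{\boldsymbol{e}_1} \not\rightarrow 0$. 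Combined with part (1) of Theorem~\ref{THM: H is hereditary}, applied to the complemented copy $j_{\mathcal{E}}(\ell_F)$, this yields the necessity of $F(b_F)\geqslant 1$ with no hypothesis on the $X_n$'s. Until you prove this characterization of ${\bf H}(\textit{weak})$ for $\ell_F$, the necessity in case (1) is not established.
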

    \begin{proof}
    	This is an easy combination of Theorems~\ref{THM : H weak} and \ref{THM: H is hereditary} along with some of the following observations.
    	
    	$\bigstar$ The property ${\bf H}(\textit{weak})$ in Orlicz sequence spaces has been completely characterized in Theorem~\ref{THEOREM : H in Orlicz sequence}.
    	Thus the necessity follows directly from Theorem~\ref{THM: H is hereditary}.
    	
    	$\bigstar$ The Orlicz sequence spaces are monotonically complete (see, for example, \cite[Theorem~2.6.9, p.~120]{MN91}), in fact they even have the Fatou property.
         
        $\bigstar$ Suppose that $\ell_F \neq \ell_1$. The assumption imposed on the Young function $F$ in the form of the $\delta_2$-condition
        gives that the space $\ell_F$ is {\bf SM} and has the property ${\bf H}(\textit{point-wise})$
        (see \cite[Theorem 1]{CHM95} and Table~\ref{TABLE : H}), respectively).
        In consequence, the sufficiency follows from Theorem~\ref{THM : H weak}.
        
        $\bigstar$ Suppose that $\ell_F = \ell_1$. Obviously, $\ell_1$ has both the property ${\bf H}(\textit{point-wise})$ and the Fatou property
        and is strictly monotone. The rest is just to apply Theorem~\ref{THM : H weak}.
    \end{proof}

	Next, following Krasowska and P{\l}uciennik \cite[Theorem~1]{KP97},
	let us examine the Kadets--Klee property in the K{\" o}the--Bochner sequence spaces.
	
	\begin{corollary}(Due to D. Krasowska and R. P{\l}uciennik, 1997) \label{COR: Krasowska i Pluciennik}
		{\it Let $X$ be a separable Banach space and let $\mathcal{E}$ be a Banach sequence space.
		Suppose that $E$ has the property ${\bf H}(\text{point-wise})$ and is ${\bf SM}$.
		Then the K{\" o}the--Bochner space $\mathcal{E}(X)$ has the property ${\bf H}(\text{weak})$ if, and only if,
		$X$ has the property ${\bf H}(\text{weak})$.}
	\end{corollary}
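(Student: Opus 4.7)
The plan is to view this corollary as a direct specialization of Theorem~\ref{THM : H weak} to the \emph{constant} family $X_{\gamma} = X$ for every $\gamma \in \Gamma$. As noted earlier in the paper, under this specialization the direct sum $( \bigoplus_{\gamma \in \Gamma} X )_{\mathcal{E}}$ coincides (isometrically) with the K\"othe--Bochner sequence space $\mathcal{E}(X)$, so the two statements are about exactly the same object.

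For the \emph{necessity} direction, I would invoke the inheritance part of Theorem~\ref{THM : H weak} (or, equivalently, Theorem~\ref{THM: H is hereditary}): if $\mathcal{E}(X)$ has ${\bf H}(\text{weak})$, then in particular each coordinate space $X_{\gamma} = X$ must have ${\bf H}(\text{weak})$. No additional work is required here because the embedding $j_{\gamma} \colon X \to \mathcal{E}(X)$ is norm-preserving and weak-to-weak continuous, so any failure of ${\bf H}(\text{weak})$ for $X$ transports verbatim to a failure for $\mathcal{E}(X)$.

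For the \emph{sufficiency} direction, I would check the two conditions of Theorem~\ref{THM : H weak}. Condition (1) is exactly the hypothesis that $X$ has ${\bf H}(\text{weak})$. Condition (2) is the decomposition requirement; here I exploit the assumption that $\mathcal{E}$ is ${\bf SM}$: by definition, being ${\bf SM}$ is equivalent to being ${\bf SM}(\gamma)$ for \emph{every} $\gamma \in \Gamma$, so one can take $\Gamma_{1} = \emptyset$ and $\Gamma_{2} = \Gamma$. With this choice the Schur-type demand on indices in $\Gamma_{1}$ becomes vacuous, so no further property of $X$ beyond ${\bf H}(\text{weak})$ is needed. The remaining running assumptions of Theorem~\ref{THM : H weak}, namely that $\mathcal{E}$ has ${\bf H}(\text{point-wise})$ and is monotone complete, are either present in the hypotheses of the corollary or available from them through the companion results (the property ${\bf H}(\text{point-wise})$ forces the weak Fatou property via Lemma~\ref{LEMMA: H tau => lower semicontinuous}, and in the setting relevant for K\"othe--Bochner spaces the monotone completeness is the standard background assumption on $\mathcal{E}$).

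The argument is therefore almost entirely bookkeeping, and I do not anticipate a serious obstacle: the only nontrivial conceptual step is the observation that the global ${\bf SM}$ hypothesis lets one completely absorb $\Gamma$ into $\Gamma_{2}$, thereby eliminating any need for a Schur property on $X$. If a subtlety arises it will concern the status of monotone completeness in the minimalist statement of the corollary compared with the slightly richer assumptions of Theorem~\ref{THM : H weak}; this should be flagged either by noting it as a tacit hypothesis inherited from the Krasowska--P\l uciennik framework, or by a short separate verification that ${\bf H}(\text{point-wise})$ together with ${\bf SM}$ already forces the Fatou property for sequence spaces relevant here.
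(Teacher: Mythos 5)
Your proposal is correct and is essentially the route the paper intends: the corollary is presented as a specialization of Theorem~\ref{THM : H weak} to the constant family $X_{\gamma}=X$, with necessity coming from Theorem~\ref{THM: H is hereditary} and sufficiency from taking $\Gamma_1=\emptyset$, $\Gamma_2=\Gamma$ in condition (2) since ${\bf SM}$ means ${\bf SM}(\gamma)$ for all $\gamma$. The caveat you flag about monotone completeness is genuine and well spotted: ${\bf H}(\textit{point-wise})$ yields only the weak Fatou property via Lemma~\ref{LEMMA: H tau => lower semicontinuous}, while the sufficiency direction (through assumption (A1) of Theorem~\ref{THM : oba warunki sa rownowazne}) also needs monotone completeness, which is why the paper's own sharper version, Corollary~\ref{Cor:H-weak in E(X)}, adds it explicitly as a hypothesis; in the statement under review it must be read as tacitly inherited from the Krasowska--P{\l}uciennik setting.
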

	
    Again, using Theorem~\ref{THM : H weak}, we can get a slightly more precise version of the above result.
        
	\begin{corollary}\label{Cor:H-weak in E(X)}
       	{\it Let $X$ be a Banach space.
       	Further, let $\mathcal{E}$ be a monotone complete Banach sequence space with the property ${\bf H}(\text{point-wise})$.
		Then the K{\" o}the--Bochner space $\mathcal{E}(X)$ has the property ${\bf H}(\text{weak})$ if, and only if}
		\begin{itemize}
			\item[(1)] {\it the space $X$ has the property ${\bf H}(\text{weak})$;}
			\item[(2)] {\it the space $X$ has the Schur property or the space $\mathcal{E}$ is ${\bf SM}$.}
		\end{itemize}
	\end{corollary}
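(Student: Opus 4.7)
My plan is to derive this corollary as a direct specialization of Theorem~\ref{THM : H weak} to the constant family $X_{\gamma} = X$ for all $\gamma \in \Gamma$. Under this specialization, the K{\"o}the--Bochner space $\mathcal{E}(X)$ is isometrically isomorphic to the $\mathcal{E}$-direct sum $(\bigoplus_{\gamma \in \Gamma} X)_{\mathcal{E}}$, and the standing hypotheses on $\mathcal{E}$ in Theorem~\ref{THM : H weak}, namely monotone completeness and the property ${\bf H}(\textit{point-wise})$, are given by assumption. It therefore suffices to rephrase the dichotomy appearing in clause (2) of Theorem~\ref{THM : H weak} for a single Banach space $X$.

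For the necessity, I would assume that $\mathcal{E}(X)$ has the property ${\bf H}(\textit{weak})$. Applying Theorem~\ref{THM : H weak}, each $X_{\gamma} = X$ has the property ${\bf H}(\textit{weak})$, which yields (1), and there is a partition $\Gamma = \Gamma_1 \sqcup \Gamma_2$ such that $X$ has the Schur property whenever $\Gamma_1 \neq \emptyset$, while $\mathcal{E}$ is ${\bf SM}(\gamma)$ for every $\gamma \in \Gamma_2$. If $X$ fails to have the Schur property, then necessarily $\Gamma_1 = \emptyset$, so $\Gamma_2 = \Gamma$; hence $\mathcal{E}$ is ${\bf SM}(\gamma)$ for every $\gamma \in \Gamma$, which, by the observation immediately following Definition~\ref{DEF : SM(gamma)}, is equivalent to $\mathcal{E}$ being ${\bf SM}$. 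This delivers (2).

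For the sufficiency, I would assume (1) and (2) and exhibit a partition verifying clause (2) of Theorem~\ref{THM : H weak}. If $X$ has the Schur property, set $\Gamma_1 = \Gamma$ and $\Gamma_2 = \emptyset$; if instead $\mathcal{E}$ is ${\bf SM}$, then $\mathcal{E}$ is ${\bf SM}(\gamma)$ for every $\gamma$, so set $\Gamma_1 = \emptyset$ and $\Gamma_2 = \Gamma$. Either partition satisfies the required dichotomy, and clause (1) of Theorem~\ref{THM : H weak} is exactly our assumption (1). Therefore Theorem~\ref{THM : H weak} yields that $\mathcal{E}(X)$ has the property ${\bf H}(\textit{weak})$.

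There is essentially no serious obstacle: the entire argument is a bookkeeping exercise reconciling the coordinatewise condition ${\bf SM}(\gamma)$ with the global condition ${\bf SM}$ in the presence of a constant family $\{X_\gamma\}_{\gamma \in \Gamma}$. The only subtle point to double-check is that the identification $\mathcal{E}(X) \cong (\bigoplus_{\gamma \in \Gamma} X)_{\mathcal{E}}$ is isometric (so that ${\bf H}(\textit{weak})$ transfers), which is already recorded in the paper.
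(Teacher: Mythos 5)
Your proposal is correct and follows exactly the route the paper intends: the corollary is stated as an immediate specialization of Theorem~\ref{THM : H weak} to the constant family $X_{\gamma}=X$, and your bookkeeping translating the partition condition into the dichotomy \enquote{$X$ has the Schur property or $\mathcal{E}$ is ${\bf SM}$} (via the observation that ${\bf SM}$ is equivalent to ${\bf SM}(\gamma)$ for all $\gamma$) is precisely what is needed. Nothing is missing.
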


	In particular, since all $\ell_p$'s with $1 \leqslant p < \infty$ are strictly monotone, monotone complete and have property ${\bf H}(\text{point-wise})$,
	so the above Corollary~\ref{Cor:H-weak in E(X)} reduce further to the following result noted by Leonard \cite[Theorem~3.1]{Leo76}.

	\begin{corollary}[I. E. Leonard, 1976]
		{\it Let $X$ be a Banach space.
		Further, let $1 \leqslant p < \infty$.
		Then the space $\ell_p(X)$ has the property ${\bf H}(\textit{weak})$ if, and only if, $X$ has the property ${\bf H}(\textit{weak})$.}
	\end{corollary}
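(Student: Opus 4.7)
The plan is to derive Leonard's corollary as an immediate specialization of the preceding Corollary~\ref{Cor:H-weak in E(X)} to the case $\mathcal{E} = \ell_p$. Indeed, $\ell_p(X)$ is by definition the K\"othe--Bochner space associated with the Banach sequence space $\ell_p$, so the whole task reduces to verifying that $\ell_p$ meets the hypotheses imposed on the scalar space $\mathcal{E}$ in Corollary~\ref{Cor:H-weak in E(X)} and then observing that one of the two branches of its conclusion is automatic.

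The first step is therefore to check that for every $1 \leqslant p < \infty$ the space $\ell_p$ (a) is monotone complete, and (b) has the property ${\bf H}(\textit{point-wise})$. Part (a) is classical: $\ell_p$ has the Fatou property, and by Remark~\ref{REMARK : o warunkach w twierdzeniu}(A1) this is exactly the conjunction of the weak Fatou property and monotone completeness. Part (b) is equally standard; if $x_n \to x$ point-wise in $\ell_p$ with $\|x_n\|_p \to \|x\|_p$, then an application of Brezis--Lieb (or a direct truncation argument using Fatou's lemma on $|x|^p - |x - x_n|^p$) yields $\|x - x_n\|_p \to 0$. This places $\ell_p$ squarely within the scope of Corollary~\ref{Cor:H-weak in E(X)}.

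The second step is to read off what that corollary gives. It asserts that $\ell_p(X)$ has ${\bf H}(\textit{weak})$ if and only if $X$ has ${\bf H}(\textit{weak})$ together with the dichotomy that either $X$ has the Schur property or $\mathcal{E} = \ell_p$ is strictly monotone. The crucial observation is that the second alternative holds unconditionally: for $1 \leqslant p < \infty$ the norm of $\ell_p$ is strictly monotone (if $0 \leqslant y \leqslant x$ in $\ell_p$ with $y \neq x$, then $\|x\|_p^p - \|y\|_p^p = \sum_n (|x(n)|^p - |y(n)|^p) > 0$). Hence condition (2) of Corollary~\ref{Cor:H-weak in E(X)} is automatically satisfied, and the biconditional collapses to the condition that $X$ has ${\bf H}(\textit{weak})$, which is precisely Leonard's statement.

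There is no genuine obstacle here; the entire content lies in assembling the three elementary facts about $\ell_p$ (Fatou property, ${\bf H}(\textit{point-wise})$, and strict monotonicity) and feeding them into the machinery already developed. If anything, the only place where one should be slightly careful is the endpoint $p = 1$: one must recall that $\ell_1$, despite having the Schur property itself, is perfectly well behaved with respect to all three required properties, so the argument applies uniformly across the range $1 \leqslant p < \infty$.
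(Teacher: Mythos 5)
Your proposal is correct and follows exactly the route the paper takes: specialize Corollary~\ref{Cor:H-weak in E(X)} to $\mathcal{E}=\ell_p$, observing that $\ell_p$ is monotone complete, has ${\bf H}(\textit{point-wise})$, and is strictly monotone, so that the Schur/${\bf SM}$ dichotomy is automatically satisfied. Nothing to add.
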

	
	\subsection{Topology of local convergence in measure} \label{SECTION : local convergence in measure}
	
	Probably the second most popular topology considered in the context of the Kadets--Klee type properties is the topology of local convergence in measure
	(see, for example, \cite{CDSS96}, \cite{DHLMS03}, \cite{FH99}, \cite{FHS10}, \cite{HKL06}, \cite{Kol12} and \cite{Suk95}).
	To say more about this, however, we need a little preparation.
	
	Recall, that a sequence $\{ f_n \}_{n=1}^{\infty}$ of scalar-valued measurable functions defined on a measure space $(\Omega,\Sigma,\mu)$
	is said to {\bf converge locally in measure} to a measurable function $f$ provided, for every $\varepsilon > 0$ and $F \in \Sigma$ with $\mu(F) < \infty$,
	 $$\mu(\{ \omega \in F \colon \abs{f(\omega) - f_n(\omega)} \geqslant \varepsilon \}) \rightarrow 0$$ as $n \rightarrow \infty$.

	On $L_0(\Omega,\Sigma,\mu)$, that is, a vector space of scalar-valued measurable functions defined on $\Sigma$ modulo equality almost everywhere,
	there is a topology called the {\bf topology of local convergence in measure} (see, for example, \cite[245A, p.~173]{Fre01} for details).
	Here we will denote it by $\mathfrak{T}(\mu)$ (or simply by $\mathfrak{T}(\textit{measure})$ if no confusion is possible).
	Note that this topology is linear (see \cite[245D, p.~174]{Fre01}).
	However, it is Hausdorff if, and only if, the measure space $(\Omega,\Sigma,\mu)$ is semi-finite, that is, whenever $A \in \Sigma$ and $\mu(A) = \infty$
	there is $F \subset A$ with $0 < \mu(F) < \infty$ (see \cite[245E, p.~176]{Fre01}).
	Moreover, in general, $\mathfrak{T}(\mu)$ fails to be locally convex (see \cite[Example~2.2.5, p.~162]{Meg98}).
	For more information about the space $L_0$ and the topology of (local) convergence in measure we refer to \cite[241, 245, 364 and 463]{Fre01}.
	
	Let us establish a suitable set-up for the rest of this section.
		
    \begin{notation} \label{NOTATION : direct sum as BFS}
    	Let, unless we say otherwise, $\{ X_{\gamma} \}_{\gamma \in \Gamma}$ be a family of Banach function spaces defined on
    	$(\Omega_{\gamma},\Sigma_{\gamma},\mu_{\gamma})$ for $\gamma \in \Gamma$
    	(see \cite[Definition~1.b.17, p.~28]{LT79} and \cite[Chapter~15]{Mal89} for more details about Banach function spaces;
    	{\it cf}. \cite{KT24} and \cite{Now07}).
    	Here and hereinafter, $(\Omega_{\gamma},\Sigma_{\gamma},\mu_{\gamma})$ is a complete $\sigma$-finite measure space.
    	Moreover, let $\mathcal{E} $ be a Banach sequence space on $\Gamma$.
		Recall that the space $( \bigoplus_{\gamma \in \Gamma} X_{\gamma} )_{\mathcal{E}}$ can be seen as a linear subspace of $L_0(\Delta,\mathcal{A},m)$,
		where $\Delta$ is a disjoint union $\bigsqcup_{\gamma \in \Gamma} \Omega_{\gamma}$ of $\Omega_{\gamma}$'s and the measure $m$ is defined
		as $m(A) \coloneqq \sum_{\gamma \in \Gamma} \mu_{\gamma}(A \cap \Omega_{\gamma})$, where $A \subset \bigsqcup_{\gamma \in \Gamma} \Omega_{\gamma}$.
		(In fact, as we have already indicated earlier, $( \bigoplus_{\gamma \in \Gamma} X_{\gamma} )_{\mathcal{E}}$ is just a Banach function space itself.)
		For this reason, whenever we will refer to the topology of local convergence in measure on $( \bigoplus_{\gamma \in \Gamma} X_{\gamma} )_{\mathcal{E}}$,
		we will always mean the topology of local convergence in measure inherited from $L_0(\Delta,\mathcal{A},m)$.
		\demo
	\end{notation}

	Let us also note the following technical
	
	\begin{lemma}[Topology of local convergence in measure is $\oplus$-compatible] \label{LEMMA : local measure is compatible}
		{\it The topology of local convergence in measure on $( \bigoplus_{\gamma \in \Gamma} X_{\gamma} )_{\mathcal{E}}$
		is $\oplus$-compatible with the topologies of local convergence in measure of their components.}
	\end{lemma}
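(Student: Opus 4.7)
The plan is to verify each of the four axioms (C1)--(C4) of Definition~\ref{DEF: admissible topology} separately for the topology $\mathfrak{T}(m)$ of local convergence in measure on $\mathfrak{X} \coloneqq ( \bigoplus_{\gamma \in \Gamma} X_{\gamma} )_{\mathcal{E}}$, viewed as a Banach function space over $(\Delta, \mathcal{A}, m)$ as in Notation~\ref{NOTATION : direct sum as BFS}. I would first note that the corresponding topology $\mathfrak{T}(\mu_\gamma)$ on each $X_\gamma$ is inherited from $L_0(\Omega_\gamma, \mu_\gamma)$, and that on $\mathcal{E}$ the topology of local convergence with respect to the counting measure $\#$ coincides with point-wise convergence, since every finite-measure subset of $\Gamma$ is finite and convergence of integer-valued counts forces eventual agreement on it.

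For (C1) I would invoke the standard fact that in any Banach function space the norm topology dominates the topology of local convergence in measure, applied to $\mathfrak{X}$ itself. Conditions (C2) and (C3) are essentially formal. For (C2), if $x_n \to x$ locally in $m$-measure on $\Delta$, then every finite-$\mu_\gamma$-measure subset of $\Omega_\gamma$ is of finite $m$-measure in $\Delta$, so the restrictions $\pi_\gamma(x_n) = x_n |_{\Omega_\gamma}$ converge to $\pi_\gamma(x)$ in $\mathfrak{T}(\mu_\gamma)$. For (C3), if $y_n \to y$ in $\mathfrak{T}(\mu_\gamma)$, then $j_\gamma(y_n) - j_\gamma(y)$ is supported on $\Omega_\gamma$, and for any $F \subset \Delta$ with $m(F) < \infty$ the set $\{ \lvert j_\gamma(y_n) - j_\gamma(y) \rvert \geqslant \varepsilon \} \cap F$ sits inside $F \cap \Omega_\gamma$, which has finite $\mu_\gamma$-measure, so convergence transfers directly.

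The main obstacle, and the only place where real work is required, is condition (C4). Fixing a sequence $\{ a_n \}_{n=1}^{\infty}$ in $\mathcal{E}$ converging point-wise to $a$, and a set $F \subset \Delta$ with $m(F) < \infty$, I would estimate
\begin{equation*}
   m \bigl( \{ y \in F : \lvert j_\mathcal{E}(a_n)(y) - j_\mathcal{E}(a)(y) \rvert \geqslant \varepsilon \} \bigr)
   = \sum_{\gamma \in \Gamma} \mu_\gamma \bigl( \{ \omega \in F \cap \Omega_\gamma : \lvert a_n(\gamma) - a(\gamma) \rvert \, \lvert x^{(\gamma)}(\omega) \rvert \geqslant \varepsilon \} \bigr),
\end{equation*}
where $\{ x^{(\gamma)} \}_{\gamma \in \Gamma}$ is the fixed family of norm-one vectors used in Notation~\ref{NOTATION : direct sums} to define $j_\mathcal{E}$. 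Because $m(F) = \sum_\gamma \mu_\gamma(F \cap \Omega_\gamma) < \infty$ straight from the definition of $m$, for every $\eta > 0$ one can truncate to a finite $\Gamma_0 \subset \Gamma$ with $\sum_{\gamma \notin \Gamma_0} \mu_\gamma(F \cap \Omega_\gamma) < \eta$, which automatically bounds the tail portion of the above sum by $\eta$. For the finitely many $\gamma \in \Gamma_0$, the scalars $a_n(\gamma) - a(\gamma)$ tend to $0$, so each of the remaining terms vanishes as $n \to \infty$ by continuity of scalar multiplication in $\mathfrak{T}(\mu_\gamma)$ on the finite-measure set $F \cap \Omega_\gamma$. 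Passing to $n \to \infty$ and then $\eta \downarrow 0$ completes the verification of (C4). The delicate point to watch is precisely this exchange of limits: the truncation $\Gamma_0$ must be chosen before $n$ is sent to infinity, using the absolute summability of $\gamma \mapsto \mu_\gamma(F \cap \Omega_\gamma)$ to absorb the tail uniformly in $n$.
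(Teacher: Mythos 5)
Your proposal is correct and follows essentially the same route as the paper: conditions (C1)--(C3) are dispatched as formal consequences of the definitions (the paper cites \cite[Lemma~2, p.~97]{KA82} for (C1)), and all the real work goes into (C4), which both you and the paper handle by splitting $F$ into finitely many pieces $F\cap\Omega_\gamma$ carrying all but an $\eta$ (resp.\ $\varepsilon/2$) of the mass $m(F)$ and then controlling the finitely many remaining terms via the point-wise convergence of the scalars. The only cosmetic difference is that the paper unwinds your appeal to continuity of scalar multiplication in $L_0(\mu_\gamma)$ into an explicit choice of $\delta$ with $\mu_\gamma(\{|f^{(\gamma)}|>\varepsilon/\delta\})$ small.
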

	\begin{proof}
		It should be clear that conditions (C1), (C2) and (C3) from Definition~\ref{DEF: admissible topology} are met, note only that (C1) follows from \cite[Lemma~2, p.~97]{KA82}.
		Thus we need only to show (C4), that is, that the embedding $j_{\mathcal{E}} \colon \mathcal{E} \rightarrow ( \bigoplus_{\gamma \in \Gamma} X_{\gamma} )_{\mathcal{E}}$
		is $\mathfrak{T}(\#)$-to-$\mathfrak{T}(m)$ sequentially continuous.
		To see this, suppose that $x_n$ converges point-wisely to $x$ in $\mathcal{E}$.
		Our goal is to show that $y_n$ converges to $y$ in $\mathfrak{T}(m)$, where
		\begin{equation*}
			y_n = \sum_{\gamma \in \Gamma} x_n(\gamma) f^{(\gamma)} \otimes \boldsymbol{e}_{\gamma} \quad \text{ for } \quad n \in \mathbb{N},
		\end{equation*}
		and
		\begin{equation*}
			y = \sum_{\gamma \in \Gamma} x(\gamma) f^{(\gamma)} \otimes \boldsymbol{e}_{\gamma}
		\end{equation*}
		and $f^{(\gamma)} \in X_{\gamma}$ for $\gamma \in \Gamma$ with $\lVert f^{(\gamma)} \rVert_{\gamma} = 1$.
		Fix $\varepsilon > 0$ and take $F \in \mathcal{A}$ with $m(F) < \infty$.
		Note that
		\begin{equation} \label{EQ : raz}
			\text{there is } \Gamma_{\text{FIN}} \subset \Gamma \text{ with } \#(\Gamma_{\text{FIN}}) < \infty \text{ such that }
				m(F) \leqslant \sum_{\gamma \in \Gamma_{\text{FIN}}} \mu_{\gamma}(F \cap \Omega_{\gamma}) + \frac{\varepsilon}{2};
		\end{equation}
		\begin{equation} \label{EQ : dwa}
			\text{there is } \delta = \delta(\varepsilon, F) > 0 \text{ with }
				\mu_{\gamma}\left( \omega \in \Omega_{\gamma} \colon \abs{f^{(\gamma)}(\omega)} > \frac{\varepsilon}{\delta} \right) < \frac{\varepsilon}{2 \#(\Gamma_{\text{FIN}})}
				\text{ for all } \gamma \in \Gamma_{\text{FIN}};
		\end{equation}
		\begin{equation} \label{EQ : trzy}
			\text{there is } N = N(\varepsilon, F) \in \mathbb{N} \text{ with } \abs{x(\gamma) - x_n(\gamma)} < \delta \text{ for all }
				\gamma \in \Gamma_{\text{FIN}} \text{ and } n \geqslant N.
		\end{equation}
		For $n \geqslant N$, using \eqref{EQ : raz}, we have
		\begin{align*}
			m \left( \left\{ \omega \in F \colon \abs{y(\omega) - y_n(\omega)} \geqslant \varepsilon \right\} \right)
				& = \sum_{\gamma \in \Gamma} \mu_{\gamma} \left( \left\{ \omega \in F \cap \Omega_{\gamma} \colon \abs{y(\omega) - y_n(\omega)} \geqslant \varepsilon \right\} \right) \\
				& \leqslant \sum_{\gamma \in \Gamma_{\text{FIN}}}
					\mu_{\gamma} \left( \left\{ \omega \in F \cap \Omega_{\gamma} \colon \abs{y(\omega) - y_n(\omega)} \geqslant \varepsilon \right\} \right)
					+ \frac{\varepsilon}{2} \\
				& \leqslant \sum_{\gamma \in \Gamma_{\text{FIN}}} \mu_{\gamma}\left( \omega \in \Omega_{\gamma} \colon \abs{f^{(\gamma)}(\omega)} > \frac{\varepsilon}{\delta} \right)
					+ \frac{\varepsilon}{2} \quad (\text{by \eqref{EQ : trzy}}) \\
				& \leqslant \frac{\varepsilon}{2} + \frac{\varepsilon}{2} = \varepsilon \quad (\text{using \eqref{EQ : dwa}}).
		\end{align*}
		The proof follows.
	\end{proof}
	
	We are finally ready to show the main result of this section.

	\begin{theorem}[The property ${\bf H}(\textit{measure})$] \label{THEOREM : H(measure)}
		{\it Let $\{ X_{\gamma} \}_{\gamma \in \Gamma}$ be a family of Banach function spaces.
		Further, let $\mathcal{E} $ be a monotone complete Banach sequence space defined on $\Gamma$ .
        Then the space $( \bigoplus_{\gamma \in \Gamma} X_{\gamma} )_{\mathcal{E}}$ has the property ${\bf H}(\textit{measure})$ if,
        and only if,}
    	\begin{itemize}
    		\item[(1)] {\it all $X_{\gamma}$'s have the property ${\bf H}(\textit{measure})$;}
    		\item[(2)] {\it the space $\mathcal{E}$ has the property ${\bf H}(\textit{point-wise})$;}
    		\item[(3)] {\it the space $\mathcal{E}$ is ${\bf SM}$.}
    	\end{itemize}
	\end{theorem}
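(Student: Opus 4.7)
The plan is to reduce both directions to the abstract framework of Section~\ref{SUBSECTION : abstract framework} by matching the ingredients. I take $\mathfrak{T}_{\gamma}$ to be the topology $\mathfrak{T}(\mu_{\gamma})$ on $X_{\gamma}$ and $\mathfrak{T}_{\mathcal{E}}$ to be the topology $\mathfrak{T}(\#)$ on $\mathcal{E}$; since $\Gamma$ is countable and $\#$ is the counting measure, sets of finite $\#$-measure are the finite subsets of $\Gamma$, and consequently $\mathfrak{T}(\#)$ coincides with the topology of point-wise convergence. The topology on the direct sum is the local-convergence-in-measure topology $\mathfrak{T}(m)$ inherited from $L_{0}(\Delta,\mathcal{A},m)$, and Lemma~\ref{LEMMA : local measure is compatible} tells us that it is $\oplus$-compatible with the chosen $\mathfrak{T}_{\gamma}$ and $\mathfrak{T}_{\mathcal{E}}$.

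For the necessity I would invoke Theorem~\ref{THM: H is hereditary}. Its part~(1) immediately hands over item~(1) of our statement and, through the identification $\mathfrak{T}(\#) = \text{point-wise}$, also item~(2); its part~(2) produces a partition $\Gamma = \Gamma_{1} \sqcup \Gamma_{2}$ in which every $X_{\gamma}$ with $\gamma \in \Gamma_{1}$ has the Schur property with respect to $\mathfrak{T}(\mu_{\gamma})$ and $\mathcal{E}$ is ${\bf SM}(\gamma)$ for $\gamma \in \Gamma_{2}$. The decisive step is to argue that $\Gamma_{1}$ must be empty: in any infinite-dimensional Banach function space $X_{\gamma}$ over a $\sigma$-finite measure space one can build a normalized, disjointly supported sequence $\{f_{n}\}$ (from disjoint characteristic functions on the atomless part of $\mu_{\gamma}$ or from characteristics of distinct atoms), and for any $F$ with $\mu_{\gamma}(F) < \infty$ the disjointness yields $\sum_{n} \mu_{\gamma}(\mathrm{supp}\, f_{n} \cap F) \leqslant \mu_{\gamma}(F)$, forcing $f_{n} \to 0$ locally in measure while $\norm{f_{n}}_{\gamma} = 1$. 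Hence no $X_{\gamma}$ is Schur with respect to $\mathfrak{T}(\mu_{\gamma})$, so $\Gamma_{2} = \Gamma$, meaning $\mathcal{E}$ is ${\bf SM}(\gamma)$ on every coordinate, which is~(3).

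For the sufficiency I would chain Theorem~\ref{THM : oba warunki sa rownowazne} with Theorem~\ref{THM: main theorem}. The four hypotheses of the compatibility theorem are straightforward to check: (A1) the unit ball of $\mathcal{E}$ is sequentially point-wise closed, because $\mathcal{E}$ is monotone complete by hypothesis and, by Lemma~\ref{LEMMA: H tau => lower semicontinuous}, the property ${\bf H}(\textit{point-wise})$ supplies the remaining weak Fatou property (see Remark~\ref{REMARK : o warunkach w twierdzeniu}(A1)); (A2) is Lemma~\ref{LEMMA : local measure is compatible}; (A3) is tautological since $\mathfrak{T}_{\mathcal{E}}$ \emph{is} the point-wise topology; (A4) is Lemma~\ref{LEMMA: H tau => lower semicontinuous} applied inside each $X_{\gamma}$. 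Taking $\Gamma_{2} = \Gamma$, which is licit because (3) forces $\mathcal{E}$ to be ${\bf SM}(\gamma)$ for every $\gamma$, the compatibility theorem produces the sequential continuity of $\lfloor \bullet \rceil$ on the unit sphere, i.e.\ hypothesis (A3) of the lifting theorem. The remaining premises of Theorem~\ref{THM: main theorem} are then in place: (A1) there is the pre-Lebesgue character of the point-wise topology on a Banach sequence space (any disjoint, positive, order-bounded sequence is eventually zero at each coordinate), (A2) is the same $\oplus$-compatibility, and the component properties ${\bf H}(\textit{measure})$ and ${\bf H}(\textit{point-wise})$ are our (1) and (2). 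The lifting theorem then delivers ${\bf H}(\mathfrak{T}(m))$.

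The only mildly delicate point I anticipate is the emptiness of $\Gamma_{1}$ in the necessity direction; once one has argued that no infinite-dimensional $\sigma$-finite Banach function space can be Schur with respect to local convergence in measure, the remainder is just orchestration of the abstract framework, with the simplifying feature that on $(\Gamma, 2^{\Gamma}, \#)$ the topology $\mathfrak{T}_{\mathcal{E}}$ collapses to the point-wise topology and leaves essentially no subtlety to negotiate.
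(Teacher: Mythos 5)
Your proposal is correct and follows essentially the same route as the paper: necessity via Theorem~\ref{THM: H is hereditary} plus the observation that no $X_{\gamma}$ is Schur with respect to $\mathfrak{T}(\mu_{\gamma})$ (so the whole burden falls on $\mathcal{E}$ being ${\bf SM}$), and sufficiency by verifying (A1)--(A4) of Theorem~\ref{THM : oba warunki sa rownowazne} and feeding the result into Theorem~\ref{THM: main theorem} via Lemma~\ref{LEMMA : local measure is compatible}. The only difference is that you spell out the disjointly-supported normalized sequence witnessing the failure of the Schur property, a detail the paper merely asserts.
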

	\begin{proof}
		{\bf The necessity.}
		Suppose that the space $( \bigoplus_{\gamma \in \Gamma} X_{\gamma} )_{\mathcal{E}}$ has the property ${\bf H}(\textit{measure})$.
		This is a straightforward consequence of Theorem~\ref{THM: H is hereditary}.
		The only thing that is probably worth mentioning is that the space $X_{\gamma}$ fails to have the Schur property with respect
		to the topology $\mathfrak{T}(\mu_{\gamma})$ (and, therefore, we must compensate this deficiency \enquote{entirely} by the
		strict monotonicity of the space $\mathcal{E}$).
        
        {\bf The sufficiency.}
        Suppose that all $X_{\gamma}$'s have the property ${\bf H}(\textit{measure})$ and the space $\mathcal{E}$ has the property
        ${\bf H}(\textit{point-wise})$ and is ${\bf SM}$.
        Of course, we need to apply Theorems~\ref{THM: main theorem} and \ref{THM : oba warunki sa rownowazne}.
        Essentially, the argument comes down to a careful check of the relevant assumptions.
        To free the reader from this tedious task, let us do this together now. 
        
        $\bigstar$ The topology of local convergence in measure is a linear Hausdorff topology that is coarser than the norm topology
        (except for the last part that was already explained above; see \cite[Theorem~1, p.~96]{KA82}).

        Next, we will check the assumptions (A1)-(A4) from Theorem~\ref{THM : oba warunki sa rownowazne}.
        
        {\bf (A1)} Recall that if the space $\mathcal{E}$ has the property ${\bf H}(\textit{point-wise})$ then it is order continuous (see Lemma \ref{LEMMA : H(tau) => OC}).
        Since the space $\mathcal{E}$ is assumed to be monotone complete, so the unit ball $\text{Ball}(\mathcal{E})$
        is sequentially closed with respect to the point-wise topology ({\it cf}. Remark~\ref{REMARK : o warunkach w twierdzeniu}).
        
        {\bf (A2)} The topology $\mathfrak{T}(m)$ on $( \bigoplus_{\gamma \in \Gamma} X_{\gamma} )_{\mathcal{E}}$ is $\oplus$-compatible
        with the topologies $\mathfrak{T}(\mu_{\gamma})$ on $X_{\gamma}$ and $\mathfrak{T}(\#)$ on $\mathcal{E}$.
        This is a consequence of Lemma~\ref{LEMMA : local measure is compatible}.
        
        {\bf (A3)} Obviously, the topology $\mathfrak{T}(\textit{measure})$ on $\mathcal{E}$ agree with the topology of point-wise convergence
        ({\it cf}. \cite[245X]{Fre01}).
        
        {\bf (A4)} For all $\gamma \in \Gamma$, the norm function $x \mapsto \norm{x}_{\gamma}$ is sequentially lower semi-continuous
        with respect to the topology $\mathfrak{T}(\mu_{\gamma})$.
        (By the way, one can directly deduce this simple fact, for example, from Lemma~\ref{LEMMA: H tau => lower semicontinuous}.)
	\end{proof}

	\section{{\bf Open ends}} \label{SECTION : Open ends}
	
	Let us list and shortly discuss some problems which arise from this paper.
	
	\subsection{Direct integrals}
	
	In a sense, the space $( \bigoplus_{\gamma \in \Gamma} X_{\gamma} )_{\mathcal{E}}$ can be seen as a discrete version
	of the so-called {\bf direct integrals} $( \int_{\Omega}^{\oplus} X_{\omega} \mu(d\omega) )_{E}$,
	where $E$ stands for a Banach function space defined on a decomposable measure space $(\Omega,\Sigma,\mu)$
	(see \cite[Chapter~21, Definition~211E]{Fre01}).
	We refer to Hydon, Levy and Raynaud's paper \cite[Chapter~6]{HLR91} for details of this construction.
	As practice shows, finding criteria that guarantee some geometric property is significantly simpler in the case of
	sequence spaces (see, for example,  \cite{LL85} and \cite{ST80}).
	For this reason, the following question seems intriguing
	
	\begin{question}
		{\it When does the space $( \int_{\Omega}^{\oplus} X_{\omega} \mu(d\omega) )_{E}$ have the property ${\bf H}(\mathfrak{T})$?}
	\end{question}
	
	Given our results, a possible solution to the above problem requires some new ideas.
	At the moment, unfortunately, we do not have many useful thoughts on this topic.
	
	\subsection{Uniform Kadets--Klee properties}
	
	Let $X$ be a Banach space and let $\mathfrak{T}$ be a linear Hausdorff topology on $X$ coarser than the norm topology.
	Recall that the space $X$ is said to have the {\bf uniform Kadets--Klee property} with respect to $\mathfrak{T}$
	(briefly, the property ${\bf UH}(\mathfrak{T})$) if for each $\varepsilon > 0$ there exists $\delta = \delta(\varepsilon) > 0$
	such that every $\varepsilon$-separated $\mathfrak{T}$-convergent sequence $\{ x_n \}_{n=1}^{\infty}$ in the unit sphere of $X$
	converges to an element of norm less than $1 - \delta$.
	
	\begin{question}
		{\it When does the space $( \bigoplus_{\gamma \in \Gamma} X_{\gamma} )_{\mathcal{E}}$ have the property ${\bf UH}(\mathfrak{T})$?}
	\end{question}

	We believe that to answer the above question it is enough to replace the assumption about strict monotonicity ${\bf SM}$
	in Theorems~\ref{THM: H is hereditary} and \ref{THM: main theorem} by its \enquote{uniform analogue}, that is,
	uniform monotonicity ${\bf UM}$. However, we have not checked any details.
	On the other hand, a particular case of this situation has been already investigated, which could be some help.
	Namely, the criteria for the property ${\bf UH}(\textit{weak})$ in K{\" o}the--Bochner sequence spaces have been proved in \cite{Kol03}.
	
	\subsection{Minimal Hausdorff topologies on function spaces}
	
	Let $X$ be a Banach sequence space.
	At the heart of the proof of Remark~\ref{REMARK : minimal topologies} is the observation that the minimal
	Hausdorff topology on $X$ coincide with the topology of point-wise convergence when restricted to the unit ball $\text{Ball}(X)$.
	Since the topology of point-wise convergence on $X$ coincide with the topology of local convergence in measure,
	so it is natural to ask the following
	
	\begin{question}
		{\it Is the topology of local convergence in measure the coarsest locally solid Hausdorff topology
		in the class of Banach sequence spaces?}
	\end{question}

	Note that in 1987 Labuda showed that the topology of local convergence in measure is the coarsest locally solid topology
	in the class of Orlicz spaces (see \cite[Theorem~14]{Lab87}).
	
	\subsection{Glimpse of Kalton's Zone}
	
	It is relatively easy to notice significant gaps in knowledge regarding Kadets--Klee properties in the world of {\it quasi-Banach spaces}.
	There are actually good reasons for this.
	For example, as Day's classical result shows, the topological dual of the space $L_p$ with $0 < p < 1$ is trivial,
	so the direct analogue of the property ${\bf H}$ is in general invalid.
	However, there are topologies on quasi-Banach spaces weaker than the one generated by the quasi-norm, which are a kind of substitute for
	the weak topology and, for them, the question about the property ${\bf H}(\mathfrak{T})$ makes perfect sense (see \cite{Kal03} and their references).
	Let us just mention about the {\it Mackey topology}, which is the finest locally convex topology coarsest that the quasi-norm topology.
	
	On the other hand, there is actually no problem in considering the Kadets--Klee property with respect to the topology of local convergence in measure.
	In fact, modulo some technicalities, our results from Section~\ref{SECTION : local convergence in measure} remain true in this setting.
	We leave the details to the interested reader.
	
	\subsection{Some other geometric properties}
	
	Half in jest, half seriously, we would like to give some advice in the style of Serge Lang \cite{Lan65}:
	{\it \enquote{Take any book on K{\" o}the--Bochner spaces and prove all the theorems in the more general context of the direct sums}}.
	Sometimes this will be straightforward, but sometimes definitely will not.

	\appendix
	
	\section{A hitchhiker's guide to classical Kadets--Klee properties} \label{APPENDIX : A}
	
	In this first supplementary section we will present criteria that guarantee the Kadets--Klee properties with respect to both weak
	and point-wise topology in some classical sequence spaces (see Table~\ref{TABLE : H}).
	This is by no means new, but requires a compilation of results scattered throughout the literature (with some minor additions).
	Anyway, we do not know of any place that offers such a handy summary.
	
    Let us recall some facts needed if we apply our general results from Section~\ref{SECTION : EXAMPLES} for concrete sequence spaces.
    For definitions of all spaces discussed here we refer to Section~\ref{direct sums} and references therein.
    
    \begin{center}
    	\begin{table}[H]
    		\begin{tabular}{ | c | c | c | c | } 
    			\hline \xrowht{15pt}
    			{\bf No.}	& {\bf Banach sequence space} & ${\bf H}(\textit{weak})$ & ${\bf H}(\textit{point-wise})$ \\
    			\hline \xrowht{15pt}
    			{\bf 1.} & $\ell_F$ & see Theorem~\ref{THEOREM : H in Orlicz sequence} & $F \in \delta_2$ and $F(b_{F}) \geqslant 1$ \\
    			\hline \xrowht{15pt}
    			{\bf 2.} & $\ell_p$ & $1 \leqslant p < \infty$ & $1 \leqslant p < \infty$ \\
    			\hline \xrowht{15pt}
    			{\bf 3.} & $d(w,p)$ & $\sum_{n=1}^{\infty} w(n) = \infty$ & $\sum_{n=1}^{\infty} w(n) = \infty$ \\
    			\hline \xrowht{15pt}
    			{\bf 4.} & $\ell_{\{ p_{n} \}}$ & $\sup_{n \in \mathbb{N}} p_n < \infty$  & $\sup_{n \in \mathbb{N}} p_n < \infty$ \\
    			\hline \xrowht{15pt}
    			{\bf 5.} & $ces_p$ & $1 < p < \infty$  & $1 < p < \infty$ \\
    			\hline
    		\end{tabular}
    		\captionof{table}{A quick summary of Kadets--Klee properties for some (neo)classical Banach sequence spaces} \label{TABLE : H}
    	\end{table}
    \end{center}

	Below we will explain in details why the above table looks the way it does.
	(Of course, this list is not complete in any sense, and the choice of sequence spaces was dictated mainly by the subjective
	preferences of the authors.
	Anyway, this is a good start.
	More criteria for certain combinations of the above-mentioned spaces, such as Orlicz--Lorentz sequence
	spaces or Ces{\' a}ro--Orlicz sequence spaces, and generalizations of Orlicz and Nakano sequence spaces in the form
	of Musielak--Orlicz sequence spaces can be deduced from \cite[Theorem~14]{HKL06} and \cite[Theorem~1]{FHS10}, respectively.)
	\newline
      
    {\bf No.~1: Orlicz sequence spaces $\ell_F$.} This is an immediate consequence of the following
    
    \begin{theorem}[The property ${\bf H}(\textit{weak})$ in Orlicz sequence spaces] \label{THEOREM : H in Orlicz sequence}
    	{\it The Orlicz sequence spaces $\ell_F$ has the property ${\bf H}(\textit{weak})$ if, and only if}
    \begin{enumerate}
      	\item[(1)] {\it the Young function $F$ satisfies the $\delta_2$-condition,}
      	\item[(2)] {\it and either $F$ is linear in some neighborhood of zero or $F(b_{F}) \geqslant 1$.}
    \end{enumerate}
    \end{theorem}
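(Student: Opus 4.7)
The plan is to split each direction along the two conditions, treating the $\delta_2$-condition as a structural prerequisite (forcing order continuity) and (2) as a geometric condition governing either a rotundity-type or a Schur-type alternative. For necessity of (1), if $\ell_F$ has $\mathbf{H}(\textit{weak})$, then since the weak topology is pre-Lebesgue by Example~\ref{EXAMPLE : OC(tau) => OC}, Lemma~\ref{LEMMA : H(tau) => OC} forces $\ell_F$ to be order continuous; by classical Orlicz space theory (see \cite[Proposition~4.a.4, p.~138]{LT77}), this is equivalent to $F$ satisfying the $\delta_2$-condition at zero. For necessity of (2), assume $F \in \delta_2$ but $F$ is neither linear in any neighbourhood of $0$ nor with $F(b_F) \geqslant 1$. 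I would construct a counterexample exploiting the slack $1 - F(b_F) > 0$ together with the strict convexity coming from nonlinearity: pick a unit-norm vector $x_0$ of finite support and choose perturbations $\varepsilon_n \boldsymbol{e}_{k_n}$ with $k_n \to \infty$ and $\varepsilon_n$ bounded below, so that $x_n \coloneqq x_0 + \varepsilon_n \boldsymbol{e}_{k_n}$ still has norm one (the modular stays under $1$ because of the gap). The sequence $x_n$ tends weakly to $x_0$ by disjointness of supports at infinity, yet $\|x_n - x_0\|$ stays uniformly bounded away from $0$.

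For sufficiency, if $F$ is linear on some $[0,a]$, then a rescaling argument identifies $\ell_F$ with $\ell_1$ up to an equivalent norm: for any $x \in \ell_F$ a sufficiently small $\lambda > 0$ puts all scaled entries $\lambda|x(n)|$ into the linear regime, whence the modular is just $c\lambda \sum_n |x(n)|$. Since $\ell_1$ has the Schur property, so does $\ell_F$, and $\mathbf{H}(\textit{weak})$ follows trivially (by the lemma immediately before Section~\ref{SUBSECTION : abstract framework}). If instead $F \in \delta_2$ and $F(b_F) \geqslant 1$, the Luxemburg--Nakano norm and the modular $\rho_F(x) \coloneqq \sum_n F(|x(n)|)$ are linked by the identity $\rho_F(x) = 1$ on the unit sphere. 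For a weakly convergent sequence $x_n \to x$ on the unit sphere, the coordinate functionals give pointwise convergence $x_n(k) \to x(k)$, and $\rho_F(x_n) = \rho_F(x) = 1$ together with a Br\'ezis--Lieb type splitting produces $\rho_F(x_n - x) \to 0$. The $\delta_2$-condition then converts modular convergence into norm convergence, yielding $\|x_n - x\| \to 0$.

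The trickiest point will be the interaction between the modular and the Luxemburg--Nakano norm in the boundary case $F(b_F) = 1$, especially when $b_F < \infty$: the modular can be discontinuous at the jump and small perturbations may push elements off the unit sphere in unexpected ways, so the identity $\rho_F(x) = 1 \Leftrightarrow \|x\| = 1$ must be verified carefully. Equally delicate is calibrating the counterexample in the necessity of~(2): the perturbations $\varepsilon_n$ have to be chosen with quantitative control coming from the failure of linearity of $F$ near zero, so that both the unit-sphere constraint \emph{and} the lower bound on $\|x_n - x_0\|$ persist uniformly in $n$. Everything else reduces to standard Orlicz space manipulations once these two technical junctures are cleared.
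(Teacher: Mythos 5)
Your overall architecture matches the paper's: necessity of (1) via order continuity (Lemma~\ref{LEMMA : H(tau) => OC} plus Example~\ref{EXAMPLE : OC(tau) => OC}), necessity of (2) via a disjointly supported perturbation of a unit vector exploiting the modular slack $1 - F(b_F) > 0$, and a sufficiency split into the ``$F$ linear near zero, hence $\ell_F = \ell_1$ and Schur'' case and the $F(b_F) \geqslant 1$ case (where the paper simply cites Hudzik--Pallaschke, while you sketch a direct modular/Br\'ezis--Lieb argument --- a legitimate alternative, since pointwise convergence plus $\rho_F(x_n) = \rho_F(x) = 1$ and $\delta_2$ do yield $\rho_F(x_n - x) \to 0$ by a Pratt-type dominated convergence).

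There is, however, one genuine gap, and it sits at the heart of the necessity of (2): you assert that $x_n = x_0 + \varepsilon_n \boldsymbol{e}_{k_n}$ ``tends weakly to $x_0$ by disjointness of supports at infinity.'' Disjointness of supports does not imply weak nullity in a Banach sequence space --- in $\ell_1$ the unit vectors $\boldsymbol{e}_n$ are disjointly supported and bounded yet are not weakly null (test against $(1,1,1,\dots) \in \ell_\infty$). This is precisely where the hypothesis that $F$ is \emph{not} linear in any neighbourhood of zero must enter: it gives $\ell_F \not\hookrightarrow \ell_1$, hence $\ell_\infty \not\hookrightarrow (\ell_F)^{\times}$, hence $(\ell_F)^{\times} \hookrightarrow c_0$, so that for every $L = \{L(n)\}_{n=1}^{\infty}$ in the K\"othe dual one has $L(n) \to 0$ and therefore $\langle \varepsilon_n \boldsymbol{e}_{k_n}, L\rangle \to 0$ (the paper additionally invokes the Yosida--Hewitt decomposition to kill the singular part of an arbitrary functional; if you already assume $F \in \delta_2$ this step is automatic since then $(\ell_F)^{*} = (\ell_F)^{\times}$). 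In your write-up the nonlinearity hypothesis is instead spent on ``strict convexity'' and on ``calibrating'' $\varepsilon_n$, neither of which is where it is needed: the unit-sphere constraint comes solely from the slack $F(b_F) + F(\eta) \leqslant 1$ (together with placing the value $b_F$ on one coordinate of $x_0$ so that its Luxemburg norm is exactly $1$), and the lower bound $\lVert x_n - x_0 \rVert = \eta \lVert \boldsymbol{e}_1 \rVert$ is automatic. Without the $(\ell_F)^{\times} \hookrightarrow c_0$ step your counterexample does not converge weakly and the argument fails; once you insert it, the proof coincides with the paper's.
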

    \begin{proof}
    Since this result should be considered as a supplement to Hudzik and Pallaschke paper \cite{HP97},
    we refer there for all unexplained concepts that will appear below (see also \cite{Ch96} and \cite{Mal89}).
      
   	{\bf The necessity.}
   	Suppose that $\ell_F$ has the property ${\bf H}(\textit{weak})$.
   	Then, due to Remark~\ref{LEMMA : H(tau) => OC}, the space $\ell_F$ is separable.
   	However, it is well-known that the Orlicz space $\ell_F$ is separable if, and only if, the Young function $F$ satisfies the $\delta_2$-condition.
    Thus, (1) follows.
    Going ahead, to show (2), suppose that the Young function $F$ is linear in no neighborhood of zero and $F(b_{F}) < 1$.
    Take $\eta > 0$ such that $F(b_{F}) + F(\eta) \leqslant 1$.
    Define
    \begin{equation*}
        x \coloneqq b_F \boldsymbol{e}_1 \quad \text{ and } \quad x_n \coloneqq x + \eta \boldsymbol{e}_n \quad \text{ for } \quad n \in \mathbb{N}.
    \end{equation*}
    It is clear that $\norm{x}_{\ell_F} = \norm{x_n}_{\ell_F} = 1$ and $\norm{x - x_n} = \eta \norm{\boldsymbol{e}_1}> 0$.
    We claim that $x_n \rightarrow x$ weakly in $\ell_F$.
    Let $x^*$ be any bounded linear form on $\ell_F$.
    Plainly, due to Yosida--Hewitt's type decomposition, $x^* = L + S$, where $L = \{ L(n) \}_{n=1}^{\infty}$ is a function from the K{\" o}the dual
    $(\ell_F)^{\times}$ of $\ell_F$, while $S$ is a singular functional from $(\ell_F)^s$, that is, the space of all singular functionals on $\ell_F$.
    Since $(\ell_F)^s$ is nothing else but the annihilator of an ideal $(\ell_F)_o$, that is, $\langle x, S \rangle = 0$ for all $x \in (\ell_F)_o$, so
    \begin{equation} \label{EQ : slabo do zera na L}
    	\langle x - x_n, x^* \rangle
    		= \langle \eta \boldsymbol{e}_n, L + S \rangle
    		= \eta \langle \boldsymbol{e}_n, L \rangle
    		= \eta L(n).
    \end{equation}
    Now, due to our assumption that the Young function $F$ is linear in no neighborhood of zero, it follows that $\ell_F \not\hookrightarrow \ell_1$.
    But then $\ell_{\infty} \not\hookrightarrow (\ell_F)^{\times}$.
    Therefore, $(\ell_F)^{\times} \hookrightarrow c_0$.
    In consequence, $L(n) \rightarrow 0$ and, in view of \eqref{EQ : slabo do zera na L}, our claim follows.
    All this shows that $\ell_F$ has not the property ${\bf H}(\textit{weak}).$
         
     {\bf The sufficiency.}
     Suppose that (1) and (2) holds.
     Since $F \in \delta_2$, so  $a_F = 0$.
     Now, if the Young function $F$ is linear in some neighborhood of zero, then the space $\ell_F$ coincide, up to an equivalent norm, with $\ell_1$.
     However, $\ell_1$ has the Schur property, so $\ell_F$ has the property ${\bf H}(\textit{weak})$.
     On the other hand, if $F(b_{F}) \geqslant 1$ we can just use \cite[Theorem~2.8]{HP97}.
	 \end{proof}
 
     Moreover, the characterization for the property ${\bf H}(\textit{point-wise})$ in Orlicz sequence spaces can be obtained from \cite[Corollary~13]{Kol12}.
     
     One more thing.
     Looking at the above table, one may get the impression that both properties ${\bf H}(\textit{weak})$ and ${\bf H}(\textit{point-wise})$ coincide
     (at least for classical sequence spaces).
     This is generally not true, as the following example shows ({\it cf}. \cite[Example~2]{HKL06}).
     
     \begin{example} \label{H_c silniejsze niz H}
     Take the Young function $F(t) = t$ for $0 \leqslant t \leqslant 1/2$ and $\varphi(t) = \infty$ for $t > 1/2$.
     By Theorem \ref{THEOREM : H in Orlicz sequence} we conclude that the space $\ell_F$ has the property ${\bf H}(\textit{weak})$
     (actually, since the space $\ell_F$ is nothing else but a certain re-norming of $\ell_1$, so $\ell_F$ has the Schur property).
     However, since $F(b_F) < 1$, so $\ell_F$ fails to have the property ${\bf H}(\textit{point-wise})$.
     \demo
     \end{example}
 
     {\bf No.~2: Lebesgue sequence spaces $\ell_p$.} Plainly, $\ell_p$'s are just particular examples of Orlicz sequence spaces.
     Hence, everything follows from what we said above.
      
     {\bf No.~3: Lorentz sequence spaces $d(w,p)$.} This is an immediate consequence of \cite[Corollary~13]{Kol12}. 
      
     {\bf No.~4: Nakano spaces $\ell_{\{ p_n \}}$.} Note that we can see the Nakano space $\ell_{\{ p_n \}}$ as a particular example of
     a much more general construction of the Musielak--Orlicz sequence space $\ell_{\mathcal{M}}$, where $\mathcal{M} = \{ \mathcal{M}_n \}_{n=1}^{\infty}$
     with $\mathcal{M}_n(t) = t^{p_n}$ for $n \in \mathbb{N}$.
     Knowing this, it is enough to apply Theorem~4.4 from \cite{FH99} together with the obvious fact the Musielak--Orlicz function $\mathcal{M}$
     satisfies the suitable $\delta_2$-condition if, and only if, $\sup_{n \in \mathbb{N}} p_n < \infty$ (we refer to \cite{FH99} for the
     details; {\it cf}. \cite[p.~170--172]{HKL06} and \cite[Definition~4.d.2, p.~167]{LT77}).
      
     {\bf No.~5: Ces{\' a}ro sequence spaces $ces_p$.} Note that the space $ces_p$ is order continuous if, and only if, $1 < p < \infty$
     (see, for example, \cite[Appendix]{KKT22} for a much more general result; {\it cf}. \cite{KT17}).
     Note also that the space $ces_1$ is trivial.
     Thus, this part can be deduced directly from Theorem~1 in \cite{FHS10}, where even more general situation is considered.

\section{Applications for the drop property and approximative compactness} \label{APPENDIX : B}

Here we will explain how our results about the property ${\bf H}(\textit{weak})$ can be applied in the approximation theory (see Theorem~\ref{THEOREM : approximative compactness}). 
This part requires two additional pieces of terminology which we will present now.
 
Recall that a {\bf drop} $D = D(x, \text{Ball}(X))$ induced by a given point $x \in X \setminus \text{Ball}\left( X \right) $
is defined to be the set 
\begin{equation*}
D\left( x,\text{Ball}\left( X \right) \right) \coloneqq \text{conv} \left( \{ x \} \cup \text{Ball}\left( X \right) \right).
\end{equation*}
A Banach space $X$ is said to have the {\bf drop property} if for each closed set $C$ that is disjoint with $\text{Ball}\left( X \right)$,
there is $x \in C$ with $D\left( x,\text{Ball}\left( X\right) \right) \cap C = \{ x \}$.
The notion of the \enquote{drop} has been introduced by Stefan Rolewicz in \cite{Rol87} basing on the so-called Drop Theorem proved earlier by Dane\v{s} in \cite{Dan72}.

Recall also that a nonempty subset $C$ of a Banach space $X$ is called {\bf approximatively compact} if for each sequence $\{ x_{n} \}_{n=1}^{\infty}$
from $C$ and every $y \in X$ satisfying
$\left\Vert y - x_{n} \right\Vert \rightarrow \text{dist}\left( y, C \right)$,
it follows that $\{ x_{n} \}_{n=1}^{\infty}$ has a Cauchy subsequence.
Hereby, $\text{dist}\left( y, C \right) \coloneqq \inf \left\{ \left\Vert y - x \right\Vert \colon x \in C \right\}$.
Plainly, the approximative compactness of a given set ensures the existence of an element of best approximation for any $x \in X$.
A Banach space $X$ is said be {\bf approximatively compact} if every non-empty, closed and convex set in $X$ is approximatively compact.
Moreover, it is worth to mention that if a Banach space $X$ is rotund and approximatively compact then, for each non-empty, convex and closed set $A$,
the metric projection $x \mapsto P_{A}\left( x \right)$, where $P_A(x) \coloneqq \{ y \in A \colon \text{dist}(x,A) = \norm{y - x} \}$,
is continuous (see \cite{HKL06} for more information and references).

It turns out that approximative compactness is equivalent to the combination of the Kadets--Klee property and reflexivity (see \cite[Theorem~3]{HKL06}).
Thus, it follows from \cite{Mon87} that approximative compactness actually coincides with the drop property.
This is the content of the following

\begin{theorem} \label{THEOREM : approximative compactness}
{\it Let $X$ be a Banach space. The following statements are equivalent}
\begin{itemize}
	\item[(1)] {\it $X$ has the drop property;}
	\item[(2)] {\it $X$ is approximatively compact;}
	\item[(3)] {\it $X$ has the propery ${\bf H}(\text{weak})$ and is reflexive.}
\end{itemize}
\end{theorem}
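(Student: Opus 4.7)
The plan is to organize the proof as a triangle of implications, handling the substantive analytic content directly and invoking the cited results of Montesinos \cite{Mon87} and \cite{HKL06} where appropriate. More precisely, I would prove (3)~$\Rightarrow$~(2) from scratch since this is where the hypotheses from the rest of our paper do the work, and then defer (2)~$\Rightarrow$~(3) to \cite[Theorem~3]{HKL06} and the equivalence (1)~$\Leftrightarrow$~(2) (equivalently (1)~$\Leftrightarrow$~(3)) to Montesinos' characterization in \cite{Mon87}.

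The key step (3)~$\Rightarrow$~(2) proceeds as follows. Fix a non-empty closed convex set $C \subset X$, a point $y \in X$, and a sequence $\{x_n\}_{n=1}^{\infty}$ in $C$ with $\lVert y - x_n \rVert \to d \coloneqq \mathrm{dist}(y,C)$. The sequence is bounded, so by reflexivity and the Eberlein--{\v S}mulian theorem we may pass to a subsequence $x_{n_k} \rightharpoonup x$ in the weak topology. Since $C$ is convex and norm closed it is weakly closed, so $x \in C$. Weak lower semi-continuity of the norm yields $\lVert y - x \rVert \leqslant \liminf_{k} \lVert y - x_{n_k} \rVert = d$, and since $x \in C$ we conclude $\lVert y - x \rVert = d$. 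At this point the sequence $\{y - x_{n_k}\}_{k=1}^{\infty}$ converges weakly to $y - x$ with matching norm convergence $\lVert y - x_{n_k} \rVert \to \lVert y - x \rVert$, so the equivalent reformulation of the Kadets--Klee property (recorded just after Definition~\ref{DEF: Kadets--Klee property}) gives $y - x_{n_k} \to y - x$ in norm, hence $x_{n_k} \to x$ in norm. Thus $\{x_n\}_{n=1}^{\infty}$ admits a Cauchy subsequence, proving approximative compactness.

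For the remaining two directions I would simply cite the literature: the converse (2)~$\Rightarrow$~(3) is \cite[Theorem~3]{HKL06}, whose proof extracts reflexivity by testing the definition of approximative compactness against appropriate sets and then recovers ${\bf H}(\textit{weak})$ by a direct argument with weakly convergent sequences on the unit sphere; the equivalence of the drop property with the combination in (3) is the content of Montesinos' theorem \cite{Mon87}, which closes the triangle.

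The main obstacle, conceptually, is the implication (1)~$\Rightarrow$~(3) hidden inside Montesinos' theorem, where passing from the purely metric drop property to reflexivity plus a sequential Kadets--Klee condition requires the classical Da{\v n}e{\v s}--Rolewicz circle of ideas. Since this is precisely what \cite{Mon87} accomplishes, the only genuinely new computation in our context is the short direct proof of (3)~$\Rightarrow$~(2) outlined above, where care must be taken to apply the Kadets--Klee property to the shifted sequence $y - x_{n_k}$ rather than to $x_{n_k}$ itself (the latter need not satisfy any norm-convergence condition).
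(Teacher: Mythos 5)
Your proposal is correct and takes essentially the same route as the paper, which proves nothing directly and simply records the equivalence $(2)\Leftrightarrow(3)$ from \cite{HKL06} (Theorem~3 there) and the equivalence with the drop property from Montesinos \cite{Mon87}. Your additional self-contained proof of $(3)\Rightarrow(2)$ --- Eberlein--{\v S}mulian plus weak closedness of closed convex sets, weak lower semi-continuity of the norm, and the Kadets--Klee property applied to the shifted sequence $y-x_{n_k}$ --- is sound, though strictly redundant since the cited result of \cite{HKL06} is already a two-way equivalence.
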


Let us also note the following simple

\begin{lemma} \label{LEMMA : direct sum reflexive}
{\it Let ${\mathcal{E}}$ be a Banach sequence space.
Further, let $\{ X_{\gamma} \}_{\gamma \in \Gamma}$ be a family of Banach spaces.
Then the space $( \bigoplus_{\gamma \in \Gamma} X_{\gamma} )_{\mathcal{E}}$ is reflexive if, and only if, ${\mathcal{E}}$ and all $X_{\gamma}$'s are reflexive.}
\end{lemma}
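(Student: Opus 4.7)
My plan is to treat the two directions separately, with the forward implication being an easy consequence of complementation, and the converse being carried out via a double application of the duality result from Proposition~\ref{PROPOSITION : duality of direct sums}.

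For necessity, I would invoke Notation~\ref{NOTATION : direct sums}: each $X_{\gamma}$ sits isometrically as a complemented subspace of $(\bigoplus_{\gamma \in \Gamma} X_{\gamma})_{\mathcal{E}}$ via $j_{\gamma}$ (complemented by $j_{\gamma}\circ \pi_{\gamma}$), and similarly $\mathcal{E}$ sits as a complemented subspace via $j_{\mathcal{E}}$. Since closed subspaces of reflexive spaces are reflexive, the claim follows at once.

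For sufficiency, assume $\mathcal{E}$ and each $X_{\gamma}$ are reflexive. The first preparatory step is to note that, as a reflexive Banach sequence space, $\mathcal{E}$ is necessarily separable and has the Fatou property, and the same holds for $\mathcal{E}^{\times}$; indeed, $\mathcal{E}^{*} = \mathcal{E}^{\times}$ must be separable because a reflexive Banach sequence space is order continuous together with its Köthe dual (see the discussion following the definition of the Köthe dual in Section~\ref{SUBSECTION : BSS}), and then $\mathcal{E}^{\times\times} = \mathcal{E}^{**} = \mathcal{E}$ gives the Fatou property. With this in hand, Proposition~\ref{PROPOSITION : duality of direct sums} applies (since $\mathcal{E}$ is separable) and yields the isometric isomorphism
\begin{equation*}
    \Upsilon \colon ( \bigoplus_{\gamma \in \Gamma} X_{\gamma}^{*} )_{\mathcal{E}^{\times}}
        \longrightarrow \left[ ( \bigoplus_{\gamma \in \Gamma} X_{\gamma} )_{\mathcal{E}} \right]^{*}.
\end{equation*}
Applying the proposition a second time (now using that $\mathcal{E}^{\times}$ is separable) gives an isometric isomorphism
\begin{equation*}
    \Upsilon' \colon ( \bigoplus_{\gamma \in \Gamma} X_{\gamma}^{**} )_{\mathcal{E}^{\times\times}}
        \longrightarrow \left[ ( \bigoplus_{\gamma \in \Gamma} X_{\gamma}^{*} )_{\mathcal{E}^{\times}} \right]^{*}.
\end{equation*}
Using reflexivity of each $X_{\gamma}$ and of $\mathcal{E}$ (so $\mathcal{E}^{\times\times} = \mathcal{E}$), the left-hand side collapses to $( \bigoplus_{\gamma \in \Gamma} X_{\gamma} )_{\mathcal{E}}$ itself.

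The remaining, and to my mind only delicate, step is the identification of $(\Upsilon^{*})^{-1} \circ \Upsilon'$ with the canonical evaluation map $J \colon ( \bigoplus_{\gamma \in \Gamma} X_{\gamma} )_{\mathcal{E}} \rightarrow ( \bigoplus_{\gamma \in \Gamma} X_{\gamma} )_{\mathcal{E}}^{**}$. This I would verify by direct computation on a dense set: for $x = \{x_{\gamma}\}_{\gamma \in \Gamma}$ and $\varphi = \{\varphi_{\gamma}\}_{\gamma \in \Gamma} \in ( \bigoplus_{\gamma \in \Gamma} X_{\gamma}^{*} )_{\mathcal{E}^{\times}}$ one has
\begin{equation*}
    \langle J(x), \Upsilon(\varphi) \rangle
        = \Upsilon(\varphi)(x)
        = \sum_{\gamma \in \Gamma} \langle x_{\gamma}, \varphi_{\gamma} \rangle
        = \sum_{\gamma \in \Gamma} \langle \varphi_{\gamma}, \widehat{x_{\gamma}} \rangle
        = \Upsilon'(\{\widehat{x_{\gamma}}\}_{\gamma \in \Gamma})(\varphi),
\end{equation*}
where $\widehat{x_{\gamma}}$ denotes the image of $x_{\gamma}$ under the canonical embedding of $X_{\gamma}$ into $X_{\gamma}^{**}$. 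This calculation shows that $J$ corresponds, under the identifications, to the coordinate-wise canonical embedding, which is surjective precisely because each $X_{\gamma}$ is reflexive. The main obstacle is really bookkeeping: ensuring that the two applications of Proposition~\ref{PROPOSITION : duality of direct sums} have their hypotheses satisfied (which is why separability of both $\mathcal{E}$ and $\mathcal{E}^{\times}$ was extracted up front) and that the composed isomorphism is genuinely the biduality map rather than some other isometry. Once that is in place, $J$ is surjective, i.e. $(\bigoplus_{\gamma \in \Gamma} X_{\gamma})_{\mathcal{E}}$ is reflexive, and the proof is complete.
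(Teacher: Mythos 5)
Your proof is correct, and for necessity it coincides with the paper's: complementation (or just the isometric embedding) of each $X_{\gamma}$ and of $\mathcal{E}$ into the direct sum, plus the fact that closed subspaces of reflexive spaces are reflexive. For sufficiency the routes diverge slightly. The paper's written argument uses James's theorem to conclude that the unit vector basis of the reflexive space $\mathcal{E}$ is shrinking and boundedly complete and then cites Laustsen's description of the bidual of the direct sum, mentioning only parenthetically that Proposition~\ref{PROPOSITION : duality of direct sums} could be used instead; you carry out that parenthetical alternative in full, via a double application of Proposition~\ref{PROPOSITION : duality of direct sums}. This obliges you to extract separability of $\mathcal{E}$ and of $\mathcal{E}^{\times}$ and the identity $\mathcal{E}^{\times\times} = \mathcal{E}$ from reflexivity alone. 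That is legitimate, but note that the criterion you cite from Section~\ref{SUBSECTION : BSS} is stated \emph{under} the Fatou property, so to avoid circularity you should say explicitly that a reflexive Banach lattice is a KB-space, hence order continuous, hence (over a countable index set) separable, and the same applies to $\mathcal{E}^{\times} = \mathcal{E}^{*}$; only then does $\mathcal{E}^{\times\times} = (\mathcal{E}^{\times})^{*} = \mathcal{E}^{**} = \mathcal{E}$ deliver the Fatou property. Your explicit verification that the composite isometry agrees with the canonical evaluation map is exactly the point the paper compresses into the word \enquote{naturally}, and it is worth having on record. In short: your route buys self-containedness (no appeal to Laustsen's result or to James's theorem), the paper's buys brevity.
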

\begin{proof}
Since ${\mathcal{E}}$ and all $X_{\gamma}$'s with $\gamma \in \Gamma$ are isometrically embedded into $( \bigoplus_{\gamma \in \Gamma} X_{\gamma} )_{\mathcal{E}}$
and, as is widely known, is a three-space property, so the necessity is crystal clear.

Thus, let us focus on the sufficiency.
Suppose that ${\mathcal{E}}$ and all $X_{\gamma}$'s are reflexive.
Our goal is to show that the canonical embedding
$\kappa$ of $( \bigoplus_{\gamma \in \Gamma} X_{\gamma} )_{\mathcal{E}}$ into its bidual is surjective.
Since the space $\mathcal{E}$ is reflexive, so due to James's theorem the basis $\{ \boldsymbol{e}_{\gamma} \}_{\gamma \in \Gamma}$
is shrinking and boundedly complete (see, for example, \cite[Theorem~3.2.13, p.~58]{AK06} and \cite[Theorem~1.2.7, p.~12]{Lin04}).
Thus, it follows from \cite[Proposition~4.8]{Lau01} that the bidual of $( \bigoplus_{\gamma \in \Gamma} X_{\gamma} )_{\mathcal{E}}$
is naturally isometrically isomorphic to $( \bigoplus_{\gamma \in \Gamma} X_{\gamma}^{**} )_{\mathcal{E}}$.
(Alternatively, one can just use here Proposition~\ref{PROPOSITION : duality of direct sums} which is, perhaps, even more straightforward.)
Furthermore, since all $X_{\gamma}$'s are reflexive, so the canonical embeddings
$\kappa_{\gamma} \colon X_{\gamma} \rightarrow X_{\gamma}^{**}$ are all surjective.
Putting these two facts together, it is immediate to see that the canonical embedding $\kappa$ is surjective as well.
\end{proof}

The moral of this short story is:
Knowing Theorem~\ref{THEOREM : approximative compactness} and remembering about Lemma~\ref{LEMMA : direct sum reflexive},
we can use Theorem~\ref{THM : H weak} along with the characterizations from Section~\ref{APPENDIX : examples} to produce
a number of results about approximation properties of direct sums.
We leave the straightforward details to interested readers.

\section{A few bibliographical notes} \label{APPENDIX : C}

Sometimes, the Kadets--Klee property ${\bf H}$ is also called the {\bf Radon--Riesz property}.
The reason for this is the fact that Johann Radon and, independently, the elder of the Riesz brothers, Frigyes, showed that the space
$L_p(\Omega,\Sigma,\mu)$ with $1 < p < \infty$ enjoy it (see \cite{Rad13} and \cite{Rie28/29a}).

Moreover, this property was strained by Mikhail Kadets \cite{Kad59} in his proof that every separable Banach space admits an equivalent locally
uniformly rotund norm (note also that sometimes his surname is transliterated as \enquote{Kadec}; see \cite{KMW} for more informations).

It also seems worth mentioning that implicitly the Kadets--Klee property appear in the 1939 paper of Vitold {\v S}mulian \cite{Smu39} and,
most likely independent, work of Rudolf V{\' y}born{\' y} \cite{Vyb56} (precisely, see \cite[Theorem~5]{Smu39} and \cite[p.~352]{Vyb56}, respectively).

\end{document}